\theoremstyle{plain}
\newtheorem{theorem}{Theorem}[section]
\newtheorem{lemma}[theorem]{Lemma}
\newtheorem{proposition}[theorem]{Proposition} 
\newtheorem{corollary}[theorem]{Corollary}
\theoremstyle{definition} 
\newtheorem{remark}[theorem]{Remark}
\begin{document}
%
%
%
%
%
%
%

\title{Shape perturbation of a nonlinear mixed problem for the heat equation}


\author{Matteo Dalla Riva\thanks{Dipartimento di Ingegneria, Universit\`a degli Studi di Palermo, Viale delle Scienze, Ed. 8, 90128 Palermo, Italy. E-mail: {matteo.dallariva@unipa.it}},
        Paolo Luzzini\thanks{Dipartimento di Scienze e Innovazione Tecnologica, Universit\`a degli Studi del Piemonte Orientale ``Amedeo Avogadro'', Viale Teresa Michel 11, 15121 Alessandria, Italy. E-mail: {paolo.luzzini@uniupo.it}}, 
        Riccardo Molinarolo\thanks{Dipartimento per lo Sviluppo Sostenibile e la Transizione Ecologica, Universit\`a degli Studi del Piemonte Orientale ``Amedeo Avogadro'', Piazza Sant'Eusebio 5, 13100, Vercelli, Italy. E-mail: {riccardo.molinarolo@uniupo.it}},
        Paolo Musolino\thanks{Dipartimento di Matematica ``Tullio Levi-Civita'', Universit\`a degli Studi di Padova, Via Trieste 63, 35121 Padova, Italy. E-mail: {paolo.musolino@unipd.it}}}

\date{\ }

\maketitle

\noindent
{\bf Abstract:}
We consider the heat equation in a domain that has a hole in its interior. We impose a Neumann condition on the exterior boundary and a nonlinear Robin condition on the boundary of the hole. The shape of the hole is determined by a suitable diffeomorphism $\phi$ defined on the boundary of a reference domain. Assuming that the problem has a solution $u_0$ when $\phi$ is the identity map, we demonstrate that a solution $u_\phi$ continues to exist for $\phi$ close to the identity map and that the ``domain-to-solution'' map $\phi\mapsto u_\phi$ is of class $C^\infty$. {Moreover, we show that the family of solutions $\{u_\phi\}_{\phi}$ is, in a sense, locally unique. Our argument relies on {tools from Potential Theory and} the Implicit Function Theorem.  Some remarks on {a} linear case complete the paper.

\noindent
{\bf Keywords:}  heat equation, shape perturbation, layer potentials, nonlinear mixed problem, nonlinear Robin boundary condition, shape sensitivity analysis.

\noindent   
{{\bf 2020 Mathematics Subject Classification:}}  35K20, 31B10, 47H30,  45A05.

\section{Introduction}

This paper addresses a nonlinear mixed problem for the heat equation. Our aim is to study how the solutions depend on perturbations of the domain of definition. There are many real-world scenarios where the relationship between the properties of an object and its shape is relevant. This occurs, for example, when one searches for designs that optimize specific properties of an object. In mathematics, this problem led to the development of the field known as ``shape optimization,'' for which we refer the reader to the monographs of Soko\l owski and Zol\'esio \cite{SoZo92}, Novotny and Soko\l owski \cite{NoSo13}, and Henrot and Pierre \cite{HePi18}, among others.

In many cases, shape optimization problems require understanding the regularity of the map associating the domain's shape--or other parameters--with a solution.  For example, knowing that this map is continuous implies that we can control small changes in the solution with small changes in the parameters. Differentiability enables the use of tools from differential calculus to determine optimal configurations. Smoothness and analyticity are even stronger properties. Smoothness allows the approximation of the solution using its Taylor polynomials with any desired degree of accuracy, while analyticity allows the solution to be expressed as a convergent power series of the perturbation parameters.

This paper investigates precisely this type of {problems}. Our specific aim is to demonstrate that, for the {nonlinear problem for the heat equation} under consideration, the  ``domain-to-solution'' map is smooth. To that aim, we utilize the Functional Analytic Approach introduced by Lanza de Cristoforis (see, e.g., \cite{DaLaMu21} and references therein) and accordingly, we heavily rely on potential theoretic methods. In particular, we will {exploit} the results in \cite{DaLu23} concerning the smooth dependence of the heat layer potential on the shape of the integration support.

The use of potential theory in the context of shape perturbation problems is not a novelty{. Indeed}, many authors, especially in the {context} of elliptic equations, have adopted this strategy. A preliminary step {in} this approach involves the shape sensitivity analysis of the layer potentials.  For example,  Potthast \cite{Po94,Po96a} proved that layer potentials for the Helmholtz equation are Fr\'echet differentiable functions of the support of integration. Similar results have been obtained for a variety of equations, including the Stokes system of fluid dynamics and the Lam\'e equations of elasticity. The reader may for {example refer to} the works of Charalambopoulos \cite{Ch95}, Costabel and Le Lou\"er \cite{CoLe12a},  Haddar and Kress \cite{HaKr04}, Hettlich \cite{He95}, and Kirsch \cite{Ki93}. 

Results proving regularities beyond differentiability are rare in this context, but exceptions exist, notably in the works of Lanza de Cristoforis and his collaborators. We have already mentioned the smoothness results of \cite{DaLu23} for the heat layer potentials, we should also mention the extension to the periodic case obtained in \cite{DaLuMoMu24} and the previous analyticity results for the Laplace operator in \cite{LaRo04} and for more general elliptic differential operators in \cite{DaLa10}.

Apart from {the results stemmed from Lanza de Cristoforis'  work}, another recent approach to derive analyticity results is through ``shape holomorphy.'' This method has been employed by   Henr\'iquez and Schwab in \cite{HeSc21} to study the Calder\'on projector for the Laplacian in $\mathbb{R}^2$, and by Pinto, Henr\'iquez, and Jerez-Hanckes in \cite{PiHeJe24} for boundary integral operators on multiple open arcs.

We also observe that almost all literature on shape sensitivity is about elliptic problems and much less is available for parabolic problems. Exceptions include the results in \cite{DaLu23,DaLuMoMu24}, which we have already mentioned, and the works of Chapko, Kress and Yoon \cite{ChKrYo98,ChKrYo99} and Hettlich and Rungell \cite{HeRu01}, where the authors prove  Fr\'echet shape-differentiability of the solutions and explore applications to certain inverse problems in heat conduction. 

In this context, the present paper aims to build upon the research of \cite{DaLu23,DaLuMoMu24} and address a gap in the literature. 

The specific boundary value problem under consideration is a mixed boundary value problem for the heat equation in a perforated domain, with a Neumann condition on a fixed outer boundary and a nonlinear Robin condition on a perturbed inner boundary. Mixed Neumann-Robin boundary value problems for the heat equations have been analyzed by several authors, due to their applications. For instance, Bacchelli, Di Cristo, Sincich, and Vessella \cite{BaDiSiVe14}, as well as {Nakamura and Wang \cite{NaWa15,NaWa17}}, have investigated such problems in the context of inverse problems. Nonlinear boundary conditions for the heat equation have also been extensively studied: {see, e.g., Friedman \cite[Chapter~7]{Fr08} for a discussion}. Generalizations of these problems can be found in the more recent work by Biegert and Warma \cite{BiWa09}.

To define our boundary value problem, we take $\alpha \in \mathopen]0,1[$,  a natural number
\[
n \in \mathbb{N} \setminus \{0, 1\}\, ,
\]
 and two sets $\Omega$ and $\omega$ that satisfy the following condition:
\begin{equation}\label{introsetconditions}
	\begin{split}
		&\mbox{$\Omega$, $\omega$ are bounded open connected subsets of $\mathbb{R}^n$ of class $C^{1,\alpha}$,} 
		\\
		&\mbox{with connected exteriors  $\Omega^- : = \mathbb{R}^n\setminus \overline{\Omega}$ and $\omega^- : =\mathbb{R}^n\setminus \overline{\omega}$,}
		\\
		 &\mbox{and such that $\overline{\omega}\subseteq\Omega$}.
	\end{split}
\end{equation}
Here above $\overline{\cdot}$ denotes the closure of a set and, for the definition of Schauder spaces and domains of class $C^{1,\alpha}$, we refer to Gilbarg and
Trudinger \cite[pp.~52, 95]{GiTr83}. {As done in \eqref{introsetconditions}, for an open set $\tilde{\Omega}$ in $\mathbb{R}^n$, we denote by $\tilde{\Omega}^-$ its exterior $\mathbb{R}^n \setminus \overline{\tilde{\Omega}}$.}

Our boundary value problem will be defined on a perforated domain obtained by removing from $\Omega$ a perturbed copy of the set $\omega$. Specifically, we will perturb $\omega$ with a diffeomorphism $\phi$ from the class 
\begin{equation}\label{A_omega}
\mathcal{A}_{\partial\omega} := \left\{ \phi \in  C^{1,\alpha}(\partial\omega, \mathbb{R}^n): \, \phi \text{ injective}, \, d\phi(y) \text{ injective for all } y \in \partial\omega \right\}\,.
\end{equation}
According to the Jordan-Leray Separation Theorem (cf.~Deimling \cite[Thm.  5.2, p. 26]{De85}),  $\phi(\partial\omega)$ splits $\mathbb{R}^n$ into exactly two open connected components, one bounded and one unbounded. We denote by 
\[
\omega[\phi]
\] 
the bounded open
connected component of $\mathbb{R}^n \setminus \phi(\partial\omega)$, and we clearly have that
\[
\partial \omega[\phi]=\phi(\partial\omega)\,.
\]
However, to ensure that $\overline{\omega[\phi]}$ remains within $\Omega$ and thus define the perturbed perforated domain, we introduce the set
\begin{equation}\label{A^Omega_omega}
    \mathcal{A}^{\Omega}_{\partial\omega} := \left\{ \phi \in \mathcal{A}_{\partial\omega} : \overline{\omega[\phi]} \subseteq \Omega  \right\}\,,
\end{equation}
which contains all $\phi \in \mathcal{A}_{\partial\omega}$ such that the closure of $\omega[\phi]$ is a subset of $\Omega$. For $\phi\in \mathcal{A}^{\Omega}_{\partial\omega}$, we can consider the perturbed perforated domain $\Omega\setminus\overline{\omega[\phi]}$ {(see Figure \ref{fig:1}).} 
\begin{figure}[!htb]
\centering
\includegraphics[width=3.2in]{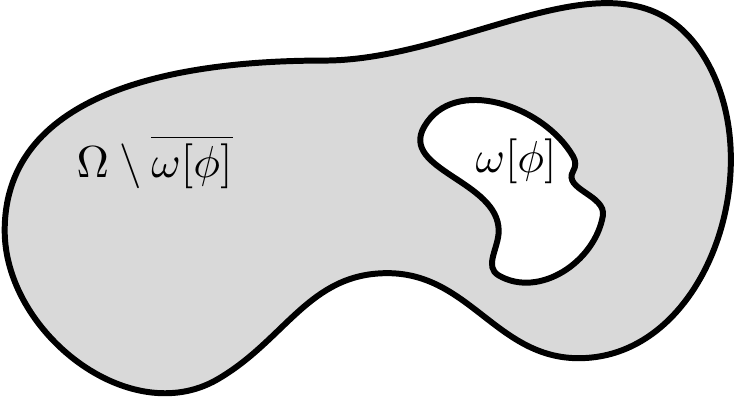}
\caption{{\it The sets $\Omega\setminus\overline{\omega[\phi]}$ and $\omega[\phi]$.}}\label{fig:1} 
\end{figure}

We also note that the identity function $\mathrm{id}_{\partial\omega}$ of $\partial\omega$ belongs to $\mathcal{A}^\Omega_{\partial\omega}$ and,
for convenience, we set 
\[
\phi_0 := \mathrm{id}_{\partial\omega}.
\]
{Accordingly,
\[\Omega\setminus \overline{\omega[\phi_0]}=\Omega \setminus \overline{\omega}\, .\]}
We observe that $\partial(\Omega\setminus \overline{\omega[\phi]})$ has two connected components:  {$\partial\Omega$ that remains fixed, and  $ \partial \omega[\phi]$ that} depends on $\phi$. To define the boundary conditions on $\partial(\Omega\setminus \overline{\omega[\phi]})$ we will use two functions $G$ and $f$ such that:
{\begin{equation}\label{introfunconditions}
\begin{split}
&G \in C^0([0,T] \times {\Omega} \times \mathbb{R})\, ,\quad \quad{G(0,x,0)=0}\quad\text{for all }x\in{\Omega}\,,\\
&\text{and the function}\  [0,T] \times \partial\omega \ni (t,x)\mapsto G(t,{\phi(x)},u(t,x))\  \text{belongs}\\ 
&\text{to $C^{\frac{\alpha}{2};\alpha}([0,T] \times \partial\omega)$ for all $u \in C^{\frac{1+\alpha}{2};1+\alpha}([0,T] \times \partial\omega)$ {and $\phi \in\mathcal{A}^\Omega_{\partial\omega}$}}\,,\\
&f \in C_{0}^{\frac{\alpha}{2}; \alpha}([0,T] \times \partial\Omega).
\end{split}
\end{equation}}
The function $G$ determines the nonlinear Robin-type condition on the inner boundary $[0,T]\times \phi(\partial \omega)$, while $f$ serves as the Neumann datum on the outer boundary $[0,T]\times \partial \Omega$ (see Section \ref{s:prel} for the definition of $C_{0}^{\frac{\alpha}{2}; \alpha}([0,T] \times \partial\Omega)$). {Conditions ensuring the validity of assumption \eqref{introfunconditions} are discussed right after equation \eqref{condition NG*}. Here we just mention that one could take, for example,   any polynomial function $G(t,x,\xi)=a_0(t)b_0(x)+a_1(t)b_1(x)\xi+\ldots+a_k(t)b_k(x)\xi^k$ with $a_0(0)=0$ and where, for each $j \in \{1,\dots, k\}$, we have $a_j \in C^{\frac{\alpha}{2}}([0,T])$ and $b_j\in C^{1}(\Omega)$.  }

Then we consider the following {nonlinear} mixed  boundary value problem for a function $u \in C_{0}^{\frac{1+\alpha}{2}; 1+\alpha}([0,T] \times (\overline{\Omega} \setminus \omega[\phi]))$:

{\begin{equation}\label{princeqpertu}
\begin{cases}
    \partial_t u - \Delta u = 0 & \quad\text{in } ]0,T] \times (\Omega \setminus \overline{\omega[\phi]}), 
    \\
    \frac{\partial}{\partial \nu_{\Omega}}  u(t,x) = f (t,x)& \quad \forall (t,x)\in [0,T] \times \partial \Omega, 
    \\
    \frac{\partial}{\partial {\nu_{\omega[\phi]}}}  u (t,x) = G(t,{x},u(t,x)) & \quad \forall (t,x)\in  [0,T] \times \partial \omega[\phi],
    \\
    u(0,\cdot)=0 & \quad \text{in } \overline{\Omega} \setminus \omega[\phi],
    \end{cases}
\end{equation}}
where {$\nu_\Omega$ and ${\nu_{\omega[\phi]}}$ respectively denote the outward unit normal vector field to $\partial\Omega$
and to $\partial \omega[\phi]=\phi(\partial \omega)$}. 

 Our goal is now as follows: Under suitable assumptions, which include the existence of a solution $u_0$ of \eqref{princeqpertu} for $\phi=\phi_0$, we aim to establish the existence of solutions $u_\phi$ for $\phi$ sufficiently close to the identity. This scenario corresponds to the case where the perturbed hole $\omega[\phi]$ is close to $\omega$. Subsequently, we will demonstrate that the map associating $\phi$ with $u_\phi$ is smooth {and that the family of solutions $\{u_\phi\}_{\phi}$ is locally unique in a sense that will be clarified later on. Finally, in the linear case, we will show the smoothness of the ``Neumann-to-Dirichlet'' operator upon  the shape parameter $\phi$ and the boundary condition.}

The paper is organized as follows: Section \ref{s:prel} provides preliminaries on parabolic Schauder spaces and potential theory for the heat equation. In Section \ref{sec:pert}, we transform problem \eqref{princeqpertu} into an equivalent nonlinear system of integral equations, which we analyze using the Implicit Function Theorem. Additionally, we present our main results on the smoothness of the {``domain-to-solution''} map (Theorem \ref{thm:smoothrep}) {and on the local uniqueness of the family of solutions $\{u_\phi\}_\phi$ (Proposition \ref{prop:locrep}). Finally, some remarks on {a linear problem} in Section \ref{sec:lin} conclude the paper.} 

\section{Preliminaries}\label{s:prel}

In this section, we provide some preliminaries, particularly focusing on the potential theory for the heat equation. In the subsequent sections, we will utilize the single layer potential to convert problem \eqref{princeqpertu} into a system of integral equations, which we will then analyze using the Implicit Function Theorem.

We begin by recalling the definition of parabolic Schauder spaces. {Let $\alpha \in \mathopen]0,1[$, $T>0$ and {$\Omega$ be an open subset of $\mathbb{R}^n$}. Then $C^{\frac{\alpha}{2};\alpha}([0, T] \times \overline{\Omega})$ denotes the space of
bounded continuous functions $u$ from $[0, T] \times \overline\Omega$ to $\mathbb{R}$ such that
\begin{align*}
    \|u\|_{C^{\frac{\alpha}{2};\alpha}([0, T] \times \overline{\Omega})} :=& \sup_{[0, T] \times \overline\Omega} |u| + \sup_{\substack{t_1,t_2 \in [0,T] \\ t_1 \neq t_2}} \,\sup_{x \in \overline\Omega} \frac{|u(t_1,x) - u(t_2,x)|}{|t_1-t_2|^\frac{\alpha}{2}}
    \\
    & + \sup_{t \in [0,T]} \,\sup_{\substack{x_1,x_2 \in \overline\Omega \\ x_1 \neq x_2}} \frac{|u(t,x_1) - u(t,x_2)|}{|x_1-x_2|^\alpha} < +\infty.
\end{align*}}
{Also,} $C^{\frac{1+\alpha}{2}; 1+\alpha}([0, T] \times \overline{\Omega})$ denotes the space of
bounded continuous functions $u$ from $[0, T] \times \overline{\Omega}$ to $\mathbb{R}$ which are continuously differentiable
with respect to the space variables and such that
\begin{align*}
    \|u\|_{C^{\frac{1+\alpha}{2}; 1+\alpha}([0, T] \times \overline{\Omega})} :=& \sup_{[0, T] \times \overline{\Omega}} |u| + \sum_{i=1}^{n} \|\partial_{x_i} u\|_{C^{\frac{\alpha}{2}; \alpha}([0, T] \times \overline{\Omega})} 
    \\
    & + \sup_{\substack{t_1,t_2 \in [0,T] \\ t_1 \neq t_2}} \,\sup_{x \in \overline{\Omega}} \frac{|u(t_1,x) - u(t_2,x)|}{|t_1-t_2|^{\frac{1+\alpha}{2}}} < +\infty.
\end{align*}
If $\Omega$ is an open subset of $\mathbb{R}^n$ of class $C^{1,\alpha}$, we can use the local {parametrization} of $\partial \Omega$ to define the space
$C^{\frac{1+\alpha}{2}; 1+\alpha}([0, T] \times \partial \Omega)$ in the natural way. In a similar way, we can define the spaces $C^{j,\alpha}(\mathcal{M})$ and $C^{\frac{j+\alpha}{2}; j+\alpha}([0, T] \times \mathcal{M})$, $j \in \{0,1\}$ on a manifold $\mathcal{M}$ of class $C^{j,\alpha}$ imbedded in $\mathbb{R}^n$ (see \cite[Appendix A]{DaLu23}). In essence, a function of class $C^{\frac{j+\alpha}{2}; j+\alpha}$ is $\left(\frac{j+\alpha}{2}\right)$-H\"older continuous in the time variable,
and $(j,\alpha)$-Schauder regular in the space variable. 

We use the subscript $0$ to denote a subspace consisting of functions that are
zero at $t = 0$. Namely,
\[
{C_0^{\frac{\alpha}{2};\alpha}([0, T] \times \overline{\Omega})}:= 
\Big\{ u \in {C^{\frac{\alpha}{2};\alpha}([0, T] \times \overline{\Omega})} \,:\, u(0,x) = 0 \quad \forall x \in \Omega\Big\}.
\]
Then $C_0^{\frac{j+\alpha}{2}; j+\alpha}([0, T] \times \overline{\Omega})$, $C_0^{\frac{j+\alpha}{2}; j+\alpha}([0, T] \times \partial \Omega)$ and  $C_0^{\frac{j+\alpha}{2}; j+\alpha}([0, T] \times \mathcal{M})$ {with $j \in \{0,1\}$}
are similarly defined.

For functions in parabolic Schauder spaces, the partial derivative $D_x$ with respect to the space variable $x$ will be denoted by the gradient $\nabla$, while we will maintain the notation $\partial_t$ for the derivative with respect to the time variable $t$.

For a comprehensive introduction to parabolic Schauder spaces we refer the reader to
classical monographs on the field, for example Lady\v{z}enskaja, Solonnikov, and Ural'ceva
\cite[Chapter 1]{LaSoUr68} (see also \cite{LaLu17,LaLu19} {for more references}).

In our boundary value problem, the nonlinear Robin condition is defined through a superposition operator associated with the function $G$. Therefore, if $\omega$ is as in \eqref{introsetconditions}, we now introduce a notation for such operators: {If $G$ is a function from $[0,T] \times \Omega \times \mathbb{R}$ to $\mathbb{R}$, then we denote by $\mathcal{N}_G$ the nonlinear superposition operator that maps a pair $(\phi,u)$, where $\phi \in \mathcal{A}_{\partial \omega}^\Omega$ and $u$ is a function from $[0,T]\times \partial\omega$, to the function $\mathcal{N}_G(\phi, u)$ defined by
\[
\mathcal{N}_G(\phi,u) (t,x) := G(t,\phi(x),u(t,x)) \quad \text{for all } (t,x) \in [0,T]\times \partial\omega.
\]
Here, the letter ``$\mathcal{N}$'' stands for ``Nemytskii operator.''}

Now we turn to present some well-known facts on the {single layer heat potential}. For proofs
and detailed references we refer to Lady\v{z}enskaja, Solonnikov, and Ural'ceva
\cite[Chapter 4]{LaSoUr68}. 

First of all, since layer potentials are integral operators whose kernel is a fundamental solution or its derivatives, we define the function $S_{n} : \mathbb{R}^{1+n} \setminus
\{(0,0)\}\to \mathbb{R}$  by
\[
S_{n}(t,x):=
\left\{
\begin{array}{ll}
\frac{1}{(4\pi t)^{\frac{n}{2}} }e^{-\frac{|x|^{2}}{4t}}&{\mathrm{if}}\ (t,x)\in \mathopen]0,+\infty[ \times{\mathbb{R}}^{n}\,, 
\\
0 &{\mathrm{if}}\ (t,x)\in (\mathopen]-\infty,0]\times{\mathbb{R}}^{n})\setminus\{(0,0)\}.
\end{array}
\right.
\]
It is well known that $S_n$ is a fundamental solution of the heat operator $\partial_t-\Delta$ in $\mathbb{R}^{1+n} \setminus \{(0,0)\}$.

We are now in the position to introduce the single layer heat potential. Let $\alpha \in \mathopen]0,1[$ and $T>0$. Now let  $\Omega$  be an open bounded subset of $\mathbb{R}^n$ of class $C^{1,\alpha}$. For a density $\mu \in L^\infty\big([0,T] \times \partial\Omega\big)$, the single layer heat potential is defined as
\begin{equation*} 
    v_{\Omega} [\mu](t,x) := \int_{0}^{t} \int_{\partial \Omega} S_{n}(t-\tau,x-y) \mu(\tau, y)\,d\sigma_y d\tau \quad \forall\,(t,x) \in [0, T] \times \mathbb{R}^n.
\end{equation*}
Moreover, we set
\begin{equation*}
    V_{\partial\Omega}[\mu] := v_{\Omega}[\mu]_{|[0,T]\times \partial\Omega} 
\end{equation*}
and
\begin{equation*}
\begin{split}
    W^*_{\partial \Omega}[\mu](t,x) := \int_{0}^t\int_{\partial\Omega} 
    \frac{\partial}{\partial \nu_\Omega(x)} S_{n}(t-\tau,x-y) \mu(\tau,y)\,&d\sigma_yd\tau \\
    & \forall\,(t,x) \in [0,T] \times \partial\Omega\, .
\end{split}
\end{equation*}

The map $V_{\partial\Omega}[\mu]$ is the trace of the single layer heat potential on $[0,T]\times \partial\Omega$, whereas the map $W^*_{\partial\Omega}[\mu]$ is associated with the normal
derivative of the single layer heat potential on $[0,T]\times \partial\Omega$ (see Theorem \ref{thmsl} (iii) below).

Layer heat potentials enjoy properties similar to those of their standard elliptic counterpart. We collect those related to the single layer potential in the following theorem.

\begin{theorem}\label{thmsl}
Let $\alpha \in \mathopen]0,1[$ and $T>0$. Let $\Omega$ be a bounded open subset of $\mathbb{R}^n$ of class $C^{1,\alpha}$. Then the following statements hold.
\begin{itemize}

\item[(i)] Let $\mu \in L^\infty([0,T] \times \partial\Omega)$. Then the function $v_{\Omega}[\mu]$ is continuous and 
$v_{\Omega}[\mu] \in C^\infty(]0,T[ \times (\mathbb{R}^n \setminus \partial\Omega))$. 
Moreover $v_{\Omega}[\mu]$ solves the heat equation 
in $]0,T]\times (\mathbb{R}^n \setminus \partial\Omega)$.

\item[(ii)] Let $v_\Omega^+[\mu]$ and $v_\Omega^-[\mu]$ denote 
the restrictions of $v_\Omega[\mu]$ to $[0,T] \times \overline{\Omega}$ and to $[0,T]\times \overline{\Omega^-}$, respectively. Then, the map from  $C_0^{\frac{\alpha}{2};  \alpha}([0,T] \times \partial\Omega)$ to  $C_{0}^{\frac{1+\alpha}{2}; 1+\alpha}([0,T] \times \overline{\Omega})$ that takes $\mu$ to $v_{\Omega}^+[\mu]$ is linear and continuous. If $R>0$ is such that $\overline{\Omega}$ is contained in the ball $B(0,R)$ of center $0$ and radius $R$, then the map from  $C_0^{\frac{\alpha}{2};  \alpha}([0,T] \times \partial\Omega)$ to  $C_{0}^{\frac{1+\alpha}{2}; 1+\alpha}([0,T] \times (\overline{B(0,R)}\setminus \Omega^-))$ that takes $\mu$ to $v_{\Omega}^-[\mu]_{|[0,T] \times (\overline{B(0,R)}\setminus \Omega^-)}$ is also linear and continuous.

\item[(iii)] Let $\mu \in C_0^{\frac{\alpha}{2};  \alpha}([0,T] \times \partial\Omega)$. Then the following jump relations hold: 
\[
\frac{\partial}{\partial \nu_\Omega}v_{\Omega}^\pm[\mu](t,x)  = \pm \frac{1}{2}\mu(t,x)
 +W_{\partial\Omega}^*[\mu](t,x),
 \]
 for all $(t,x) \in [0,T] \times \partial\Omega$.
\item[(iv)] The operator $V_{\partial \Omega}$ is an isomorphism from the space $C_0^{\frac{\alpha}{2}; \alpha}([0,T] \times \partial\Omega)$ to $C_0^{\frac{1+\alpha}{2}; 1+\alpha}([0,T] \times \partial\Omega)$.
\end{itemize}
\end{theorem}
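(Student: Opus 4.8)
The plan is to derive all four statements from the explicit Gaussian form of the fundamental solution $S_n$ together with classical parabolic potential estimates; since these are standard facts, the honest route is to reduce them to the analysis of the single layer heat potential in \cite[Chapter 4]{LaSoUr68}, but I sketch the arguments I would assemble. For statement (i), I would fix $(t,x)$ with $t>0$ and note that $(\tau,y)\mapsto S_n(t-\tau,x-y)$ is smooth away from the corner $(\tau,y)=(t,x)$, where its singularity is integrable against $d\sigma_y\,d\tau$ because the Gaussian decay dominates the $(n-1)$-dimensional surface measure; continuity of $v_\Omega[\mu]$ then follows by dominated convergence. For smoothness on $]0,T[\times(\mathbb{R}^n\setminus\partial\Omega)$ I would differentiate under the integral sign: when $x\notin\partial\Omega$ the quantity $|x-y|$ is bounded below for $y\in\partial\Omega$, so every space-time derivative of $S_n$ stays integrable and the differentiation is justified; finally $(\partial_t-\Delta_x)S_n(t-\tau,x-y)=0$ off the diagonal yields the heat equation in $]0,T]\times(\mathbb{R}^n\setminus\partial\Omega)$.

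The continuity of $\mu\mapsto v_\Omega^\pm[\mu]$ claimed in statement (ii) is the step I expect to be the main obstacle. Here one must bound the sup-norm, the spatial H\"older seminorm of $\nabla v_\Omega^\pm[\mu]$, and the parabolic time seminorms of $v_\Omega^\pm[\mu]$, each by $\|\mu\|_{C_0^{\frac{\alpha}{2};\alpha}([0,T]\times\partial\Omega)}$. The condition $\mu(0,\cdot)=0$ built into the subscript-$0$ spaces is essential: it is what lets the potential gain the full regularity $C_0^{\frac{1+\alpha}{2};1+\alpha}$ in the parabolic scaling, since the time integral then starts from a regularizing contribution rather than from a jump at $\tau=0$. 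These are precisely the delicate kernel estimates carried out in \cite[Chapter 4]{LaSoUr68}; once the target regularity and the quantitative bounds are in place, linearity and continuity of the map follow at once.

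For statement (iii) I would examine $\frac{\partial}{\partial\nu_\Omega(x)}S_n(t-\tau,x-y)$ as $x$ approaches a point of $\partial\Omega$ from inside $\Omega$ and from $\Omega^-$. Splitting the kernel into a leading singular part and an integrable remainder, the leading part reproduces in the limit the classical $\pm\frac{1}{2}\mu$ jump---the parabolic kernel being a spatial Gaussian weighted in time, its spatial singularity is exactly the elliptic one---while the remainder tends to the principal value operator $W^*_{\partial\Omega}[\mu]$; the $C^{1,\alpha}$ regularity of $\partial\Omega$ together with $\mu\in C_0^{\frac{\alpha}{2};\alpha}$ makes these limits uniform.

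Finally, for statement (iv), statements (ii)--(iii) already show that $V_{\partial\Omega}$ maps $C_0^{\frac{\alpha}{2};\alpha}([0,T]\times\partial\Omega)$ continuously into $C_0^{\frac{1+\alpha}{2};1+\alpha}([0,T]\times\partial\Omega)$. Injectivity I would get from uniqueness for the heat equation: if $V_{\partial\Omega}[\mu]=0$, then $v_\Omega[\mu]$ has vanishing trace on $[0,T]\times\partial\Omega$ and vanishing initial datum, hence vanishes in $]0,T]\times(\mathbb{R}^n\setminus\partial\Omega)$, and subtracting the two jump relations of (iii) forces $\mu=0$. Surjectivity, together with boundedness of the inverse, I would obtain by a Fredholm argument---writing $V_{\partial\Omega}$ as a compact perturbation of an invertible flat-boundary model operator so that it has index zero, and then concluding via the open mapping theorem---which is the invertibility result of \cite[Chapter 4]{LaSoUr68} to which I ultimately defer.
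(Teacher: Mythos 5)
Your overall route coincides with the paper's: points (i)--(iii) are deferred to classical parabolic potential theory (the paper cites \cite{DaLu23,LaLu17,LaLu19} for these), and for (iv) your injectivity argument --- vanishing trace, hence vanishing potential, hence $\mu=0$ by subtracting the jump relations --- is exactly the argument the paper spells out. However, there is a concrete gap in that injectivity step. From $V_{\partial\Omega}[\mu]=0$ and zero initial datum you conclude that $v_\Omega[\mu]$ ``vanishes in $]0,T]\times(\mathbb{R}^n\setminus\partial\Omega)$,'' treating the interior and exterior components alike. On the bounded component $\Omega$ this is indeed uniqueness for the interior initial-Dirichlet problem; but on the unbounded component $\Omega^-$ uniqueness for the heat equation with zero Cauchy--Dirichlet data \emph{fails} without a growth restriction at infinity (Tychonoff's example). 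To conclude $v_\Omega^-[\mu]=0$ you must first invoke the behavior of the single layer potential at infinity --- the paper uses its exponential decay on $[0,T]$ for densities in $C_0^{\frac{\alpha}{2};\alpha}$ --- and only then apply a maximum-principle/uniqueness argument for the exterior domain (cf.\ Brown \cite{Br89} and Costabel \cite{Co90}). As written, the exterior half of your argument is unjustified, and it is precisely the half that the paper takes care to make explicit.

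The surjectivity step is also shakier than you present it. First, the result you ``ultimately defer'' to is not where you place it: \cite[Chapter 4]{LaSoUr68} solves the boundary value problems via equations of the \emph{second} kind (double layer for Dirichlet, normal derivative of the single layer for Neumann) and does not contain the invertibility of the first-kind operator $V_{\partial\Omega}$ between parabolic Schauder spaces; the relevant sources are Brown \cite[Prop.~6.2]{Br89}, \cite[Theorem~4.18]{Br90} and Costabel \cite[Corollary~3.13]{Co90}, combined with the Schauder-space regularity of \cite{LaLu17,LaLu19} (see \cite[Thm.~2.7]{Mo25} for a detailed proof). Second, your proposed Fredholm scheme --- ``compact perturbation of an invertible flat-boundary model operator'' --- is not obviously implementable for $V_{\partial\Omega}$: it is a first-kind operator that gains one order of regularity, the flat model lives on an unbounded boundary so the comparison requires localization, and neither the invertibility of the local flat model between $C_0^{\frac{\alpha}{2};\alpha}$ and $C_0^{\frac{1+\alpha}{2};1+\alpha}$ nor the compactness of the localization error is a standard fact you can wave at; Brown's actual proof proceeds by different means (Rellich-type estimates in Lipschitz cylinders, which the paper suggests simplifying in the $C^{1,\alpha}$ setting). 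Either carry out that construction in detail or replace the deferral with the correct citations.
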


\begin{proof}
For the proof of points (i)--(iii) we refer, for instance, to \cite{DaLu23,LaLu17,LaLu19}. {For the proof of (iv), one can simplify the argument of the proof of Brown \cite[Prop.~6.2]{Br89} (which deals with the more involved case of Lipschitz domains) in combination with the regularity results for layer potential operators in Schauder spaces of \cite{LaLu17,LaLu19}. For invertibility results of the trace of the single layer potential for the heat equation, see also Costabel \cite[Corollary~3.13]{Co90} and Brown \cite[Theorem~4.18]{Br90}.} {We emphasize here that in order to prove (iv) one uses the fact that if $\mu \in C_0^{\frac{\alpha}{2}; \alpha}([0,T] \times \partial\Omega)$ is such that
\[
V_{\partial \Omega}[\mu]=0 \qquad \text{on $[0,T] \times \partial\Omega$},
\]
then by the uniqueness of the (interior) initial Dirichlet problem for the heat equation one has
\[
v_{\Omega}^+[\mu]=0 \qquad \text{in $[0,T] \times \overline{\Omega}$}, 
\]
and accordingly
\[
\frac{\partial}{\partial \nu_\Omega}v_{\Omega}^+[\mu]=0 \qquad \text{on $[0,T] \times \partial\Omega$}.
\]
Similarly, condition 
\[
V_{\partial \Omega}[\mu]=0 \qquad \text{on $[0,T] \times \partial\Omega$},
\] 
and the exponential decay at infinity of $v_{\Omega}^-[\mu]$ allow to show that
\[
v_{\Omega}^-[\mu]=0 \qquad \text{in $[0,T] \times \overline{\Omega^-}$}, 
\]
and accordingly
\[
\frac{\partial}{\partial \nu_\Omega}v_{\Omega}^-[\mu]=0 \qquad \text{on $[0,T] \times \partial\Omega$}
\]
(cf.~the proof of Brown \cite[Prop.~6.2]{Br89} and Costabel \cite[p.~522]{Co90}). Then the jump formulas for $\frac{\partial}{\partial \nu_\Omega}v_{\Omega}^\pm[\mu]$ imply that $\mu=0$ (see also \cite[Thm.~2.7]{Mo25} for a detailed proof). 
}
\qed \end{proof}

Fredholm theory is an important tool for analyzing the integral equations associated with the boundary behavior of layer potentials and their normal derivatives. To apply this theory, we need some compactness results on the embedding of parabolic Schauder spaces, which we state in the following Proposition \ref{Ascoli Arzela cons prop}. The proof of this proposition is well-known and relies on the Ascoli-Arzel\`a Theorem.

\begin{proposition}\label{Ascoli Arzela cons prop}
Let $\alpha \in \mathopen]0,1[$ and $T>0$. Let $\Omega$ be a bounded open subset of $\mathbb{R}^n$ of class $C^{1,\alpha}$. Then, the embeddings of $C_0^{\frac{\alpha}{2}; \alpha}([0,T] \times \partial\Omega)$ into $C_0^{\frac{\gamma}{2}; \gamma}([0,T] \times \partial\Omega)$ for any $\gamma \in [0,\alpha[$ and of $C_0^{\frac{1+\alpha}{2}; 1+\alpha}([0,T] \times \partial\Omega)$ into $C_0^{\frac{\alpha}{2}; \alpha}([0,T] \times \partial\Omega)$ are compact.
\end{proposition}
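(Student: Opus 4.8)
The plan is to derive both compactness statements from the Ascoli-Arzel\`a theorem combined with the elementary interpolation inequality for H\"older seminorms. I would first record the latter: for a continuous function $w$ on a compact metric space $(X,d)$ and exponents $0\le\sigma<\beta\le1$, splitting the quotient $|w(p)-w(q)|/d(p,q)^{\sigma}$ according to whether $d(p,q)$ is below or above a threshold $\rho$ and optimizing over $\rho$ yields a constant $C=C(\beta,\sigma,X)$ with
\[
\sup_{\substack{p,q\in X\\ p\neq q}}\frac{|w(p)-w(q)|}{d(p,q)^{\sigma}}
\le C\,\Big(\sup_{X}|w|\Big)^{1-\frac{\sigma}{\beta}}\Big(\sup_{\substack{p,q\in X\\ p\neq q}}\frac{|w(p)-w(q)|}{d(p,q)^{\beta}}\Big)^{\frac{\sigma}{\beta}}.
\]
I would apply this with $X=[0,T]$ (the space variable frozen) to interpolate the time seminorms, and with $X=\partial\Omega$ read through a finite atlas of local parametrizations (the time variable frozen) to interpolate the space seminorms; in each case the weaker anisotropic H\"older seminorm is bounded by a power of $\sup|w|$ times a power of the corresponding stronger seminorm.

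For the first embedding, let $\{u_k\}$ be a bounded sequence in $C_0^{\frac{\alpha}{2};\alpha}([0,T]\times\partial\Omega)$. The uniform bound on its norm yields uniform boundedness and equicontinuity on the compact set $[0,T]\times\partial\Omega$, so Ascoli-Arzel\`a provides a subsequence converging uniformly to some $u$; since each $u_k$ vanishes at $t=0$, so does $u$, and $u$ is $(\alpha/2;\alpha)$-H\"older as a uniform limit of uniformly H\"older functions. Applying the interpolation inequality to $w=u_k-u$ with $\beta=\alpha/2,\ \sigma=\gamma/2$ (time) and $\beta=\alpha,\ \sigma=\gamma$ (space) for the target exponent $\gamma\in[0,\alpha[$, and using that the $(\alpha/2;\alpha)$-seminorms of $u_k-u$ stay bounded while $\sup|u_k-u|\to0$, I conclude that the time and space $\gamma$-seminorms of $u_k-u$ tend to $0$. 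Hence $u_k\to u$ in $C_0^{\frac{\gamma}{2};\gamma}$, which proves relative compactness (the case $\gamma=0$ being plain uniform convergence).

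For the second embedding the scheme is the same, with the extra task of producing the space-H\"older part of the weaker norm from the gradient bound. Boundedness of $\{u_k\}$ in $C_0^{\frac{1+\alpha}{2};1+\alpha}$ bounds $u_k$ and $\nabla u_k$ in the sup norm and makes both families equicontinuous---in space through the Lipschitz bound furnished by $\nabla u_k$ (read in the local parametrizations of $\partial\Omega$), and in time through the $\frac{1+\alpha}{2}$-H\"older bound---so Ascoli-Arzel\`a yields a subsequence with $u_k\to u$ and $\nabla u_k\to\nabla u$ uniformly, and $u\in C_0^{\frac{\alpha}{2};\alpha}$. To obtain $u_k\to u$ in $C_0^{\frac{\alpha}{2};\alpha}$ I would control the three pieces of the target norm of $w=u_k-u$ separately: $\sup|w|\to0$ directly; the time $\frac{\alpha}{2}$-seminorm via the interpolation inequality between $\sup|w|$ and the $\frac{1+\alpha}{2}$-seminorm exactly as before; and the space $\alpha$-seminorm via interpolation between $\sup|w|$ and the Lipschitz seminorm of $w$, the latter bounded by $\|\nabla(u_k-u)\|_{\infty}\to0$. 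Thus $u_k\to u$ in the target space.

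The only genuinely delicate point I anticipate is the passage through the local parametrizations of $\partial\Omega$ in the second embedding: one must check that the sup bound on the tangential gradient translates into a Lipschitz (hence, after interpolation, H\"older) bound for the space seminorm defined chartwise, and that the interpolation constants can be chosen uniformly over the finite atlas. This is exactly where the $C^{1,\alpha}$ regularity of $\partial\Omega$ is used; everything else is a routine combination of Ascoli-Arzel\`a with the interpolation inequality.
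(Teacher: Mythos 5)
Your proof is correct and follows exactly the route the paper indicates: the paper gives no detailed argument, stating only that the result is well-known and relies on the Ascoli--Arzel\`a Theorem, and your write-up is the standard fleshing-out of that remark (Ascoli--Arzel\`a to extract a uniformly convergent subsequence, then H\"older-seminorm interpolation between the sup norm and the stronger seminorms to upgrade to convergence in the weaker H\"older norm, with the chartwise reading of the space seminorm on $\partial\Omega$ handled via a finite atlas). No gaps worth flagging.
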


From Proposition \ref{Ascoli Arzela cons prop}, combined with Theorem \ref{thmsl} (iv) and \cite[Theorem 4.6]{LaLu19}, we readily derive the following Theorem \ref{thm V and W*} on the mapping and compactness properties of the operators $V_{\partial\Omega}$ and $W^\ast_{\partial\Omega}$.

\begin{theorem}\label{thm V and W*}
Let $\alpha \in \mathopen]0,1[$ and $T > 0$. Let $\Omega$ be a bounded open subset of $\mathbb{R}^n$ of class $C^{1,\alpha}$. Then, the operators $V_{\partial\Omega}$ and $W^*_{\partial\Omega}$ are compact from $C_0^{\frac{\alpha}{2};  \alpha}([0,T] \times \partial\Omega)$ to itself.
\end{theorem}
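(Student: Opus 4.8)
The plan is to exploit a standard functional-analytic principle: if a bounded linear operator maps the space $C_0^{\frac{\alpha}{2};\alpha}([0,T]\times\partial\Omega)$ into the \emph{smaller}, more regular space $C_0^{\frac{1+\alpha}{2};1+\alpha}([0,T]\times\partial\Omega)$, then, since the inclusion of the latter into the former is compact by Proposition \ref{Ascoli Arzela cons prop}, the operator is compact when regarded as a map of $C_0^{\frac{\alpha}{2};\alpha}([0,T]\times\partial\Omega)$ into itself. This is because compact operators form a two-sided ideal, so that the composition of a bounded operator with the compact embedding is again compact. Consequently, the entire task reduces to verifying a smoothing (regularity-gain) mapping property for each of $V_{\partial\Omega}$ and $W^*_{\partial\Omega}$.

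For $V_{\partial\Omega}$, the smoothing property is immediate from Theorem \ref{thmsl}(iv): that statement asserts that $V_{\partial\Omega}$ is an isomorphism from $C_0^{\frac{\alpha}{2};\alpha}([0,T]\times\partial\Omega)$ onto $C_0^{\frac{1+\alpha}{2};1+\alpha}([0,T]\times\partial\Omega)$, so in particular it is a bounded map into the more regular space. Composing with the compact embedding $C_0^{\frac{1+\alpha}{2};1+\alpha}([0,T]\times\partial\Omega)\hookrightarrow C_0^{\frac{\alpha}{2};\alpha}([0,T]\times\partial\Omega)$ furnished by Proposition \ref{Ascoli Arzela cons prop} yields at once the compactness of $V_{\partial\Omega}$ on $C_0^{\frac{\alpha}{2};\alpha}([0,T]\times\partial\Omega)$.

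For $W^*_{\partial\Omega}$, the same scheme applies once we know that $W^*_{\partial\Omega}$ likewise gains regularity, i.e.\ that it maps $C_0^{\frac{\alpha}{2};\alpha}([0,T]\times\partial\Omega)$ boundedly into $C_0^{\frac{1+\alpha}{2};1+\alpha}([0,T]\times\partial\Omega)$. This is precisely the content of \cite[Theorem 4.6]{LaLu19}, and it is here that the $C^{1,\alpha}$ regularity of $\partial\Omega$ enters in an essential way: although the normal derivative of the fundamental solution produces an a priori more singular kernel than that of $V_{\partial\Omega}$, the H\"older regularity of the boundary tames this singularity, so that the operator actually improves Schauder regularity rather than merely preserving it. Granting this mapping property, I would once more compose with the compact embedding of Proposition \ref{Ascoli Arzela cons prop} to conclude that $W^*_{\partial\Omega}$ is compact from $C_0^{\frac{\alpha}{2};\alpha}([0,T]\times\partial\Omega)$ to itself.

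The main obstacle is thus entirely concentrated in the regularity statement for $W^*_{\partial\Omega}$; the functional-analytic step presents no difficulty, as it rests on the elementary fact that a bounded operator followed by a compact embedding is compact. The genuinely technical ingredient is the sharp smoothing estimate for the normal-derivative operator, which I would import from \cite[Theorem 4.6]{LaLu19} rather than reprove, exactly as the phrasing preceding the statement anticipates.
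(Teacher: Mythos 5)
Your treatment of $V_{\partial\Omega}$ is correct and coincides with the paper's argument: Theorem \ref{thmsl} (iv) gives boundedness into $C_0^{\frac{1+\alpha}{2};1+\alpha}([0,T]\times\partial\Omega)$, and composition with the compact embedding of Proposition \ref{Ascoli Arzela cons prop} yields compactness. The gap is in the $W^*_{\partial\Omega}$ step: the smoothing property you invoke --- that $W^*_{\partial\Omega}$ maps $C_0^{\frac{\alpha}{2};\alpha}([0,T]\times\partial\Omega)$ boundedly into $C_0^{\frac{1+\alpha}{2};1+\alpha}([0,T]\times\partial\Omega)$ --- is false on a domain that is merely of class $C^{1,\alpha}$, and it is not the content of \cite[Theorem 4.6]{LaLu19}. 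To see the obstruction, use the jump relation of Theorem \ref{thmsl} (iii):
\[
W^*_{\partial\Omega}[\mu] \;=\; \nu_\Omega\cdot\bigl(\nabla v_\Omega^+[\mu]\bigr)_{|[0,T]\times\partial\Omega} \;-\; \tfrac{1}{2}\,\mu .
\]
Even for smooth $\mu$, the right-hand side carries the factor $\nu_\Omega$, which for a generic $C^{1,\alpha}$ boundary is only $C^{0,\alpha}$ in the space variable, while $\nabla v_\Omega^+[\mu]$ restricted to the boundary is only of class $C^{\frac{\alpha}{2};\alpha}$; neither factor admits a further tangential derivative, so $W^*_{\partial\Omega}[\mu]$ is in general no better than $C^{\frac{\alpha}{2};\alpha}$. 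The full-order gain you describe does hold for smooth boundaries (where $W^*_{\partial\Omega}$ behaves like an operator of order $-1$), but the $C^{1,\alpha}$ regularity of $\partial\Omega$ caps the attainable regularity of $W^*_{\partial\Omega}[\mu]$ at the $C^{\frac{\alpha}{2};\alpha}$ level.

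The correct factorization --- and the one the paper intends --- goes in the opposite direction. What \cite[Theorem 4.6]{LaLu19} provides is a regularizing property \emph{from below}: on a $C^{1,\alpha}$ domain, $W^*_{\partial\Omega}$ is bounded from a less regular space of the type $C_0^{\frac{\beta}{2};\beta}([0,T]\times\partial\Omega)$, $\beta<\alpha$, into $C_0^{\frac{\alpha}{2};\alpha}([0,T]\times\partial\Omega)$; the weakly singular kernel, tamed by the $C^{1,\alpha}$ boundary, improves the H\"older exponent up to $\alpha$ but cannot raise the order. Compactness of $W^*_{\partial\Omega}$ on $C_0^{\frac{\alpha}{2};\alpha}([0,T]\times\partial\Omega)$ then follows by \emph{pre}-composing with the compact embedding $C_0^{\frac{\alpha}{2};\alpha}([0,T]\times\partial\Omega)\hookrightarrow C_0^{\frac{\beta}{2};\beta}([0,T]\times\partial\Omega)$, i.e., the first embedding listed in Proposition \ref{Ascoli Arzela cons prop} --- the one your proof never invokes, and which is stated there precisely for this purpose. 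In short, your functional-analytic skeleton (bounded map composed with compact embedding) is sound, but for $W^*_{\partial\Omega}$ the compact embedding must come first, and the smoothing lands in $C_0^{\frac{\alpha}{2};\alpha}$ rather than departing from it.
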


Problem \eqref{princeqpertu} is defined in a perforated domain, which is a connected set obtained by removing a part of a larger domain. We will need some results for functions defined in these specific types of sets. Therefore, in the remaining part of this section, we focus on perforated domains. We consider the following assumption:

\begin{equation}\label{assOmtildom}
	\begin{split}
		&\mbox{$\Omega$ and $\tilde{\omega}$ are bounded open connected subsets of $\mathbb{R}^n$ of class $C^{1,\alpha}$,} 
		\\
		&\mbox{with connected exteriors  {$\Omega^-:=\mathbb{R}^n\setminus \overline{\Omega}$ and $\tilde{\omega}^-:=\mathbb{R}^n\setminus \overline{\tilde{\omega}}$}} 
		\\
		&\mbox{and such that $\overline{\tilde{\omega}}\subseteq\Omega$}.
	\end{split}
\end{equation}

{By Theorem \ref{thmsl} (iv) and by the uniqueness of the solution of the Dirichlet problem for the heat equation, we deduce the following Lemma \ref{lemma rappr}, where we see that solutions of the heat equation that are in $C_{0}^{\frac{1+\alpha}{2}; 1+\alpha}([0,T] \times (\overline{\Omega} \setminus \tilde{\omega}))$ can be represented as a sum of single layer potentials.}

\begin{lemma}\label{lemma rappr} 
Let $\alpha \in \mathopen]0,1[$ and $T>0$. Let $\Omega$ and $\tilde{\omega}$ be as in assumption \eqref{assOmtildom}. Then the map from $C_0^{\frac{\alpha}{2};  \alpha}([0,T] \times \partial\Omega) \times C_0^{\frac{\alpha}{2};  \alpha}([0,T] \times \partial \tilde{\omega})$ to the space
\begin{equation*}
    \left\{u\in C_{0}^{\frac{1+\alpha}{2}; 1+\alpha}([0,T] \times (\overline{\Omega} \setminus \tilde{\omega})) \,:\, \partial_t u  - \Delta u =0 \quad \text{in } {\mathopen]0,T]} \times {(\Omega \setminus \overline{\tilde{\omega}})} \right\}
\end{equation*}
that takes a pair $(\mu,\eta)$ to the function 
\begin{equation}\label{U}
	{u_{\Omega,\tilde\omega}[\mu,\eta]:=}(v^+_{\Omega} [\mu] + v^-_{\tilde{\omega}}[\eta])_{| [0,T] \times (\overline{\Omega} \setminus \tilde{\omega})}
\end{equation}
is a bijection.
\end{lemma}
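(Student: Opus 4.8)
The plan is to prove that the map $(\mu,\eta)\mapsto u_{\Omega,\tilde\omega}[\mu,\eta]$ is well defined and then to establish its injectivity and surjectivity separately. For well-definedness, I would first note that by Theorem \ref{thmsl}(i) both $v_\Omega[\mu]$ and $v_{\tilde\omega}[\eta]$ solve the heat equation away from $\partial\Omega$ and $\partial\tilde\omega$, respectively, so their sum solves it in $\mathopen]0,T]\times(\Omega\setminus\overline{\tilde\omega})$, while the initial condition at $t=0$ vanishes because the densities lie in the $C_0$-subspaces. The required Schauder regularity of the restriction to $[0,T]\times(\overline{\Omega}\setminus\tilde\omega)$ comes from Theorem \ref{thmsl}(ii): the interior term $v_\Omega^+[\mu]$ already belongs to $C_0^{\frac{1+\alpha}{2};1+\alpha}([0,T]\times\overline{\Omega})$, whereas for $v_{\tilde\omega}^-[\eta]$ one picks $R>0$ with $\overline{\Omega}\subseteq B(0,R)$ and observes that $\overline{\Omega}\setminus\tilde\omega$ sits inside the bounded part of the exterior of $\tilde\omega$ to which part (ii) applies.

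For injectivity, suppose $u_{\Omega,\tilde\omega}[\mu,\eta]=0$ on $[0,T]\times(\overline{\Omega}\setminus\tilde\omega)$ and introduce the globally defined function $w:=v_\Omega[\mu]+v_{\tilde\omega}[\eta]$ on $[0,T]\times\mathbb{R}^n$. By hypothesis $w=0$ on the closed perforated domain, in particular on $\partial\Omega$ and on $\partial\tilde\omega$. On the exterior $\Omega^-$ the function $w$ solves the heat equation, vanishes at $t=0$, has zero trace on $\partial\Omega$, and decays exponentially at infinity; by uniqueness for the exterior initial--Dirichlet problem, $w=0$ on $[0,T]\times\overline{\Omega^-}$. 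In the same way, uniqueness for the interior initial--Dirichlet problem in $\tilde\omega$ gives $w=0$ on $[0,T]\times\overline{\tilde\omega}$, so that $w\equiv0$ on $[0,T]\times\mathbb{R}^n$. Since $v_{\tilde\omega}[\eta]$ is smooth across $\partial\Omega$ and $v_\Omega[\mu]$ is smooth across $\partial\tilde\omega$ (again Theorem \ref{thmsl}(i)), the jump relations of Theorem \ref{thmsl}(iii) for the normal derivative of $w$ across each boundary force $\mu=0$ and $\eta=0$.

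For surjectivity, I would reduce the problem to a boundary integral system. Given $u$ in the target space, I look for $(\mu,\eta)$ solving
\begin{equation*}
\begin{cases}
V_{\partial\Omega}[\mu]+\big(v_{\tilde\omega}^-[\eta]\big)_{|[0,T]\times\partial\Omega}=u_{|[0,T]\times\partial\Omega}, \\
\big(v_\Omega^+[\mu]\big)_{|[0,T]\times\partial\tilde\omega}+V_{\partial\tilde\omega}[\eta]=u_{|[0,T]\times\partial\tilde\omega}.
\end{cases}
\end{equation*}
If such a pair exists, then $u$ and $u_{\Omega,\tilde\omega}[\mu,\eta]$ are two solutions of the heat equation in $\Omega\setminus\overline{\tilde\omega}$ with zero initial datum and the same trace on $\partial\Omega\cup\partial\tilde\omega$, hence they coincide by uniqueness of the initial--Dirichlet problem in the perforated domain. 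The diagonal part of the system is $\mathrm{diag}(V_{\partial\Omega},V_{\partial\tilde\omega})$, an isomorphism by Theorem \ref{thmsl}(iv), while the off-diagonal operators are integral operators with smooth kernels, since $\mathrm{dist}(\partial\Omega,\partial\tilde\omega)>0$ (as $\overline{\tilde\omega}\subseteq\Omega$); hence they are compact into $C_0^{\frac{1+\alpha}{2};1+\alpha}$. The system is thus of the form ``isomorphism plus compact,'' a Fredholm operator of index zero, so by the Fredholm alternative it suffices to verify injectivity. A pair annihilated by the homogeneous system yields $u_{\Omega,\tilde\omega}[\mu,\eta]=0$ on the boundary, hence on the whole perforated domain by uniqueness, so $\mu=\eta=0$ by the injectivity already proved; therefore the operator is invertible and surjectivity follows.

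The main obstacle I anticipate is the coupling between the two boundaries: because the two potentials interact, one cannot invert $V_{\partial\Omega}$ and $V_{\partial\tilde\omega}$ independently. The key point that unlocks the argument is the geometric separation $\mathrm{dist}(\partial\Omega,\partial\tilde\omega)>0$, which makes the cross terms compact and reduces the solvability to a Fredholm alternative whose injectivity is supplied by the extension-and-jump argument of the injectivity step.
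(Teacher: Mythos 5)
Your proposal is correct, and all three steps hold up: well-definedness via Theorem \ref{thmsl}(i)--(ii), injectivity via the global extension $w=v_\Omega[\mu]+v_{\tilde\omega}[\eta]$ combined with interior/exterior Dirichlet uniqueness and the jump relations of Theorem \ref{thmsl}(iii), and surjectivity via the Fredholm alternative for the coupled trace system. The paper itself offers no detailed proof, only the attribution ``by Theorem \ref{thmsl}(iv) and by the uniqueness of the solution of the Dirichlet problem,'' so your argument is a legitimate fleshing-out of exactly those ingredients; moreover, your surjectivity step follows the same ``isomorphism plus compact, then injectivity by uniqueness'' pattern that the authors themselves use for the operator $\mathcal{J}_\beta$ in Proposition \ref{prop J_beta}, and your compactness claim for the cross terms is the standard fact on time-dependent integral operators with non-singular kernels that the paper repeatedly cites from \cite[Lemmas A.2, A.3]{DaLu23} together with the compact embeddings of Proposition \ref{Ascoli Arzela cons prop}. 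One observation: the Fredholm detour can be bypassed entirely, which is presumably why the authors cite only two ingredients. Given $u$ in the target space, the parabolic Green representation formula over the two components of $\partial(\Omega\setminus\overline{\tilde\omega})$ splits $u$ as $u_1+u_2$, with $u_1$ caloric in $\Omega$ and $u_2$ caloric in $\tilde\omega^-$ and decaying at infinity; applying Theorem \ref{thmsl}(iv) to solve $V_{\partial\Omega}[\mu]=u_{1|[0,T]\times\partial\Omega}$ and $V_{\partial\tilde\omega}[\eta]=u_{2|[0,T]\times\partial\tilde\omega}$, and then Dirichlet uniqueness (interior for $u_1$, exterior with decay for $u_2$) identifies each piece with the corresponding single layer potential, giving surjectivity directly. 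That route trades your Fredholm machinery for the Green representation formula and the mapping properties of the double layer heat potential; your version has the advantage of staying entirely within the operators the paper has already introduced.
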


To apply the Implicit Function Theorem, we need to examine a linearization of problem \eqref{princeqpertu}, i.e., a linear mixed Neumann-Robin boundary value problem. For this, we need the following uniqueness result, whose proof follows, for example, from Liebermann \cite[Corollary 5.4]{Li96} (see also Friedman \cite[Lemma 2, p. 146]{Fr08} and Ladyzhenskaja, Solonnikov, and Ural'ceva \cite[Chapter 4, \S 5]{LaSoUr68}). 

\begin{lemma}\label{beta prob lemma}
    Let $\alpha \in \mathopen]0,1[$ and $T>0$. Let $\Omega$ and $\tilde{\omega}$ be as in assumption \eqref{assOmtildom}.  Let $\beta\in C^{\frac{\alpha}{2};  \alpha}([0,T] \times \partial\tilde{\omega})$. Then the unique solution in $C_{0}^{\frac{1+\alpha}{2}; 1+\alpha}([0,T] \times (\overline{\Omega} \setminus \tilde{\omega}))$ of problem
	{\begin{equation}\label{beta problem eq}
    \begin{cases}
	\partial_t u - \Delta u = 0 & \quad\text{in } \mathopen]0,T] \times (\Omega \setminus \overline{\tilde{\omega}}), 
	\\
   \frac{ \partial }{ \partial\nu_{\Omega} }u(t,x) = 0 & \quad \forall (t,x)\in  [0,T] \times \partial \Omega, 
	\\
	   {\frac{ \partial }{\partial\nu_{\tilde{\omega}} }u}(t,x)  - \beta(t,x) u(t,x) = 0 &\quad  \forall (t,x)\in [0,T] \times \partial \tilde{\omega},
    \\
    u(0,\cdot)=0 & \quad \text{in } \overline{\Omega} \setminus \tilde{\omega},
    \end{cases}
    \end{equation}}
	is $u=0$.
\end{lemma}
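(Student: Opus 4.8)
The plan is to prove uniqueness by an energy argument combined with Grönwall's inequality; the only genuine difficulty is that $\beta$ is allowed to change sign.

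First I would fix a solution $u$ of \eqref{beta problem eq} and introduce the energy
\[
E(t):=\frac12\int_{\Omega\setminus\overline{\tilde\omega}}|u(t,x)|^2\,dx\,.
\]
Differentiating in $t$, using $\partial_t u=\Delta u$, and applying Green's first identity on the perforated domain $\Omega\setminus\overline{\tilde\omega}$ while keeping track of its two boundary components, I expect to arrive at the identity
\[
E'(t)=-\int_{\Omega\setminus\overline{\tilde\omega}}|\nabla u(t,\cdot)|^2\,dx-\int_{\partial\tilde\omega}\beta(t,\cdot)\,|u(t,\cdot)|^2\,d\sigma\,.
\]
Here the homogeneous Neumann condition annihilates the contribution on $\partial\Omega$, while on $\partial\tilde\omega$ the outward unit normal of $\Omega\setminus\overline{\tilde\omega}$ equals $-\nu_{\tilde\omega}$, so the Robin condition $\frac{\partial}{\partial\nu_{\tilde\omega}}u=\beta u$ produces exactly the displayed sign.

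Since $\beta$ need not be nonnegative, the boundary term cannot simply be discarded, and this is the crux of the argument. I would control it by a trace (Ehrling-type) inequality, valid because $\partial\tilde\omega$ is of class $C^{1,\alpha}$: for every $\varepsilon>0$ there is a constant $C_\varepsilon>0$, independent of $u$, such that
\[
\int_{\partial\tilde\omega}|u|^2\,d\sigma\le\varepsilon\int_{\Omega\setminus\overline{\tilde\omega}}|\nabla u|^2\,dx+C_\varepsilon\int_{\Omega\setminus\overline{\tilde\omega}}|u|^2\,dx\,.
\]
Choosing $\varepsilon$ so small that $\varepsilon\|\beta\|_{L^\infty}\le1$ absorbs the gradient terms and leaves $E'(t)\le 2C_\varepsilon\|\beta\|_{L^\infty}\,E(t)$. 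Because $u(0,\cdot)=0$ forces $E(0)=0$ and $E\ge0$, Grönwall's inequality then yields $E\equiv0$, hence $u\equiv0$.

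The step I expect to be most delicate is not the sign of $\beta$ but rather the rigorous justification of the energy identity itself: a priori $u$ is only $C^{1,\alpha}$ in space up to the boundary, so $\Delta u$ need not extend continuously to $\partial(\Omega\setminus\overline{\tilde\omega})$ and Green's identity is not immediately applicable. I would resolve this by exploiting interior parabolic regularity to write the identity on slightly shrunken subdomains and then passing to the limit, controlling the boundary integrals through the $C^{1+\alpha}$-bounds on $u$ and the conormal jump relations of Theorem \ref{thmsl}. Alternatively, and more economically, one can bypass these regularity technicalities altogether by invoking a maximum-principle-based uniqueness theorem for parabolic oblique-derivative problems, such as Liebermann \cite[Corollary 5.4]{Li96}, from which $u=0$ follows at once since all the initial and boundary data vanish.
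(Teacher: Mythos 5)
Your argument is correct, and it is genuinely different from what the paper does: the paper gives no self-contained proof of this lemma at all, but simply invokes maximum-principle-based uniqueness theory for parabolic oblique-derivative problems, citing Lieberman \cite[Corollary 5.4]{Li96} (with Friedman \cite[Lemma 2, p.~146]{Fr08} and Lady\v{z}enskaja--Solonnikov--Ural'ceva \cite[Chapter 4, \S 5]{LaSoUr68} as alternatives) --- which is exactly the fallback you mention in your final sentence. Your primary route --- the energy identity, with the sign-changing Robin coefficient absorbed via an Ehrling-type trace inequality and Gr\"onwall's lemma --- is sound: the signs are right (the outward normal of $\Omega\setminus\overline{\tilde\omega}$ on $\partial\tilde\omega$ is $-\nu_{\tilde\omega}$), the trace inequality is available on $C^{1,\alpha}$ domains, and the boundedness of $\beta$ (which lies in $C^{\frac{\alpha}{2};\alpha}([0,T]\times\partial\tilde\omega)$) is all that is needed, so the sign of $\beta$ is indeed harmless. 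You also correctly isolate the genuine technical cost of this route: a solution in $C_{0}^{\frac{1+\alpha}{2};1+\alpha}$ has $\partial_t u$ and $\Delta u$ defined only in the interior, so the identity must be established on shrunken space domains (and on time intervals $[\epsilon,T]$, sending $\epsilon\to 0$ at the end using $E(0)=0$ and the continuity of $E$), with the limit passage justified by interior parabolic smoothness together with the continuity of $u$ and $\nabla u$ up to the boundary; this is routine but must be said, as you do. As for what each approach buys: yours is elementary, self-contained, and would extend to merely bounded Robin coefficients; the paper's citation is shorter and sidesteps precisely those regularity technicalities, since the maximum-principle argument works with pointwise information and never integrates by parts a solution that is only $C^{1,\alpha}$ up to the boundary.
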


We conclude this section with the following proposition, where we investigate {an} auxiliary boundary operator, $\mathcal{J}_\beta$, associated with the linear Neumann-Robin boundary value problem \eqref{beta problem eq}. We are going to use $\mathcal{J}_\beta$ to analyze the integral formulation of problem \eqref{princeqpertu}.

\begin{proposition}\label{prop J_beta}
      Let $\alpha \in \mathopen]0,1[$ and $T>0$. Let $\Omega$ and $\tilde{\omega}$ be as in assumption \eqref{assOmtildom}. Let $\beta\in C^{\frac{\alpha}{2};  \alpha}([0,T] \times \partial\tilde{\omega})$. Let $\mathcal{J}_\beta = (\mathcal{J}_{\beta,1}, \mathcal{J}_{\beta,2})$ be the map from $C_0^{\frac{\alpha}{2};  \alpha}([0,T] \times \partial\Omega) \times C_0^{\frac{\alpha}{2};  \alpha}([0,T] \times \partial\tilde{\omega})$ to itself that takes a pair $(\mu,\eta)$ to the pair $\mathcal{J}_\beta[\mu,\eta]:=(\mathcal{J}_{\beta,1}[\mu,\eta],\mathcal{J}_{\beta,2}[\mu,\eta])$ defined by
	\begin{equation}\label{J_beta eq}
	\begin{aligned}
	\mathcal{J}_{\beta,1}[\mu,\eta] &:= \left( \frac{1}{2} I + W^\ast_{\partial\Omega} \right) [\mu] + \nu_{\Omega} \cdot \nabla  v^-_{\tilde{\omega}}[\eta]_{|[0,T] \times\partial\Omega} {\quad \text{on $[0,T]\times \partial \Omega$}},
	\\
	\mathcal{J}_{\beta,2}[\mu,\eta] &:= \left( -\frac{1}{2} I + W^\ast_{\partial\tilde{\omega}} \right) [\eta] +
    \nu_{\tilde{\omega}} \cdot \nabla  v^+_{\Omega}[\mu]_{|[0,T] \times\partial\tilde{\omega}}
   \\
   & \qquad  - \beta (v^+_{\Omega}[\mu]_{|[0,T] \times\partial\tilde{\omega}} + V_{\partial\tilde{\omega}}[\eta]) {\quad \text{on $[0,T]\times \partial \tilde{\omega}$}}.  
	\end{aligned}
	\end{equation}
	Then  $\mathcal{J}_{\beta}$ is a linear homeomorphism from $C_0^{\frac{\alpha}{2};  \alpha}([0,T] \times \partial\Omega) \times C_0^{\frac{\alpha}{2};  \alpha}([0,T] \times \partial\tilde{\omega})$ to itself.
\end{proposition}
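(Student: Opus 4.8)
The plan is to exhibit $\mathcal{J}_\beta$ as a compact perturbation of an isomorphism, deduce that it is a Fredholm operator of index zero, and then prove that it is injective; bijectivity then follows from Fredholm theory, and the homeomorphism property from the Open Mapping Theorem. Throughout, I would write $\mathcal{X} := C_0^{\frac{\alpha}{2}; \alpha}([0,T] \times \partial\Omega) \times C_0^{\frac{\alpha}{2}; \alpha}([0,T] \times \partial\tilde{\omega})$. First I would record that $\mathcal{J}_\beta$ is a well-defined bounded linear operator on $\mathcal{X}$: the operators $W^\ast_{\partial\Omega}$, $W^\ast_{\partial\tilde\omega}$ and $V_{\partial\tilde\omega}$ are continuous on the relevant Schauder spaces by Theorem \ref{thmsl} and Theorem \ref{thm V and W*}, multiplication by $\beta \in C^{\frac{\alpha}{2};\alpha}([0,T]\times\partial\tilde\omega)$ is continuous, and the mixed terms involve a single layer potential (or its spatial gradient) generated on one boundary and evaluated on the other. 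Since $\overline{\tilde\omega} \subseteq \Omega$ forces $\mathrm{dist}(\partial\Omega, \partial\tilde\omega) > 0$, Theorem \ref{thmsl}(ii) guarantees that these mixed terms are continuous as well.

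Next I would decompose $\mathcal{J}_\beta = \mathcal{D} + \mathcal{K}$, where $\mathcal{D}[\mu,\eta] := (\tfrac12\mu,\, -\tfrac12\eta)$ is plainly an isomorphism of $\mathcal{X}$ and $\mathcal{K}$ collects every remaining term. The crucial point is that $\mathcal{K}$ is compact. Indeed, $W^\ast_{\partial\Omega}[\mu]$, $W^\ast_{\partial\tilde\omega}[\eta]$ and $V_{\partial\tilde\omega}[\eta]$ are compact by Theorem \ref{thm V and W*}, and hence so is $\beta\,V_{\partial\tilde\omega}[\eta]$. Each of the mixed terms, namely $\nu_\Omega \cdot \nabla v^-_{\tilde\omega}[\eta]_{|[0,T]\times\partial\Omega}$, $\nu_{\tilde\omega} \cdot \nabla v^+_\Omega[\mu]_{|[0,T]\times\partial\tilde\omega}$ and $\beta\, v^+_\Omega[\mu]_{|[0,T]\times\partial\tilde\omega}$, is a smoothing operator, because the kernel $S_n(t-\tau, x-y)$ and its $x$-derivatives are smooth for $x,y$ ranging over the two disjoint boundaries; thus each maps boundedly into a higher-regularity parabolic Schauder space and, by the compact embeddings of Proposition \ref{Ascoli Arzela cons prop}, is compact into $\mathcal{X}$. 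Consequently $\mathcal{J}_\beta$ is Fredholm of index zero, so it is bijective as soon as it is injective.

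For injectivity I would assume $\mathcal{J}_\beta[\mu,\eta] = (0,0)$ and set $u := u_{\Omega,\tilde\omega}[\mu,\eta]$. By Lemma \ref{lemma rappr}, $u \in C_0^{\frac{1+\alpha}{2}; 1+\alpha}([0,T] \times (\overline\Omega \setminus \tilde\omega))$ and solves the heat equation in $\mathopen]0,T] \times (\Omega \setminus \overline{\tilde\omega})$ with $u(0,\cdot)=0$. Using the jump relations of Theorem \ref{thmsl}(iii) on $\partial\Omega$ (for $v^+_\Omega[\mu]$) and on $\partial\tilde\omega$ (for $v^-_{\tilde\omega}[\eta]$), together with the fact that each potential is smooth across the opposite boundary, one checks that $\mathcal{J}_{\beta,1}[\mu,\eta]$ equals the trace of $\frac{\partial}{\partial\nu_\Omega}u$ on $[0,T]\times\partial\Omega$, while $\mathcal{J}_{\beta,2}[\mu,\eta]$ equals the trace of $\frac{\partial}{\partial\nu_{\tilde\omega}}u - \beta u$ on $[0,T]\times\partial\tilde\omega$. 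Hence the vanishing of $\mathcal{J}_\beta[\mu,\eta]$ means precisely that $u$ solves the homogeneous problem \eqref{beta problem eq}, and Lemma \ref{beta prob lemma} forces $u = 0$. Finally, the injectivity of the representation map of Lemma \ref{lemma rappr} gives $(\mu,\eta) = (0,0)$. Being a continuous linear bijection between Banach spaces, $\mathcal{J}_\beta$ is then a homeomorphism by the Open Mapping Theorem.

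The step I expect to be most delicate is the compactness of the mixed (off-diagonal and Robin-coupling) terms. The argument rests on the observation that a single layer heat potential is smooth away from its integration support, so that evaluating it, or its spatial gradient, on a boundary lying at positive distance from the support produces an operator with smooth kernel that factors through a compact embedding. Making this regularity gain precise in the parabolic Schauder scale is where care is needed, although it is a direct consequence of the mapping properties already recorded in Theorem \ref{thmsl}(ii) and the compactness statements of Theorem \ref{thm V and W*} and Proposition \ref{Ascoli Arzela cons prop}.
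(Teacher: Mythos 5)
Your proposal is correct and follows essentially the same route as the paper's proof: the same decomposition of $\mathcal{J}_\beta$ into the diagonal isomorphism $(\mu,\eta)\mapsto(\tfrac12\mu,-\tfrac12\eta)$ plus a compact remainder (with compactness of the off-diagonal terms coming from the smoothness of the kernel on disjoint boundaries together with the compact embeddings of Proposition \ref{Ascoli Arzela cons prop}), the same Fredholm index-zero reduction to injectivity, and the same injectivity argument via the jump relations, Lemma \ref{beta prob lemma}, and the uniqueness of the representation in Lemma \ref{lemma rappr}.
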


\begin{proof}
Let $\bar{\mathcal{J}} := (\bar{\mathcal{J}}_1, \bar{\mathcal{J}}_2)$ be the linear operator from $C_0^{\frac{\alpha}{2};  \alpha}([0,T] \times \partial\Omega) \times C_0^{\frac{\alpha}{2};  \alpha}([0,T] \times \partial\tilde{\omega})$ to itself defined by
\begin{equation*}
    \bar{\mathcal{J}}_1[\mu,\eta] := \frac{1}{2} \mu, \quad \bar{\mathcal{J}}_2[\mu,\eta] := -\frac{1}{2} \eta,
\end{equation*}
for all $(\mu,\eta) \in C_0^{\frac{\alpha}{2};  \alpha}([0,T] \times \partial\Omega) \times C_0^{\frac{\alpha}{2};  \alpha}([0,T] \times \partial\tilde{\omega})$. Clearly $\bar{\mathcal{J}}$ is a linear homeomorphism.

Moreover, {by Theorem \ref{thm V and W*}}, by the
results of \cite[Lemmas A.2, A.3]{DaLu23} on non-autonomous composition operators and on
time-dependent integral operators with non-singular kernels, {and  by the {compactness  of the} embeddings of parabolic Schauder spaces of {Theorem \ref{Ascoli Arzela cons prop}}}, we deduce that the map from $C_0^{\frac{\alpha}{2};  \alpha}([0,T] \times \partial\Omega) \times C_0^{\frac{\alpha}{2};  \alpha}([0,T] \times \partial\tilde{\omega})$ to itself that takes a pair $(\mu,\eta)$ to the pair $\mathcal{J}^C_{\beta}[\mu,\eta] {= (\mathcal{J}^C_{\beta,1}[\mu,\eta] ,\mathcal{J}^C_{\beta,1}[\mu,\eta])}$ defined by
\[
\begin{split}
	\mathcal{J}^C_{\beta,1}[\mu,\eta] &:= W^\ast_{\partial\Omega}[\mu] + \nu_{\Omega} \cdot \nabla  v^-_{\tilde{\omega}}[\eta]_{|[0,T] \times\partial\Omega} \\ 
	& \qquad \qquad \qquad \qquad \qquad \qquad \qquad \qquad  \text{on } [0,T] \times \partial\Omega,
    \\
    \mathcal{J}^C_{\beta,2}[\mu,\eta] &:=  W^\ast_{\partial\tilde{\omega}}[\eta]
    +
    \nu_{\tilde{\omega}} \cdot \nabla  v^+_{\Omega}[\mu]_{|[0,T] \times\partial\tilde{\omega}}
    - \beta (v^+_{\Omega}[\mu]_{|[0,T] \times\partial\tilde{\omega}} + V_{\partial\tilde{\omega}}[\eta])  \\ &\qquad \qquad  \qquad \qquad \qquad \qquad  \qquad \qquad\text{on } [0,T] \times \partial\tilde{\omega},
\end{split}
\]
is compact. Since compact perturbations of
linear homeomorphisms are Fredholm operators of index $0$, we have that $\mathcal{J}_{\beta} = \bar{\mathcal{J}} + \mathcal{J}^C_{\beta}$
is a Fredholm operator of index $0$.  Therefore, in order to complete the proof, it suffices to prove that $\mathcal{J}_\beta$ is injective. Thus, we now assume that $(\mu,\eta) \in C_0^{\frac{\alpha}{2};  \alpha}([0,T] \times \partial\Omega) \times C_0^{\frac{\alpha}{2};  \alpha}([0,T] \times \partial\tilde{\omega})$ is such that
\begin{equation}\label{J_beta=0}
\mathcal{J}_{\beta}[\mu,\eta] = (0,0).
\end{equation}

Then, by the jump formulas of Theorem \ref{thmsl} (iii)  and by \eqref{J_beta=0}, we deduce that the function ${u_{\Omega,\tilde{\omega}}}[\mu,\eta]$ defined by \eqref{U} is a solution of the boundary value problem \eqref{beta problem eq}.
Then by Lemma \ref{beta prob lemma}, we have that  ${u_{\Omega,\tilde{\omega}}}[\mu,\eta] = 0$,  which implies $(\mu,\eta)=(0,0)$, by the uniqueness of the representation provided by Lemma \ref{lemma rappr}.
\qed \end{proof}

\section{The perturbed nonlinear mixed problem (\ref{princeqpertu})}\label{sec:pert}

We now take into consideration the perturbed problem \eqref{princeqpertu}. As mentioned, we aim to demonstrate, under suitable conditions, the existence of solutions $u_\phi$ for $\phi$ close to the identity and show that the ``domain-to-solution'' map $\phi\mapsto u_\phi$ is smooth.

The first step is transforming the nonlinear problem \eqref{princeqpertu} into a nonlinear system of integral equations.

Let $\alpha \in \mathopen]0,1[$, $T>0$ and let $\Omega$, $\omega$ be as in \eqref{introsetconditions}. Let $G$, $f$ be as in \eqref{introfunconditions}. {We  note that the assumptions {in \eqref{introfunconditions} on} $G$ imply that $\mathcal{N}_{G}$ maps  ${\mathcal{A}_{\partial \omega}^\Omega\times}C_0^{\frac{1+\alpha}{2};1+\alpha}([0,T] \times \partial\omega)$ to {$C_0^{\frac{\alpha}{2};\alpha}([0,T] \times \partial\omega)$}.}

Then let $\mathcal{M}=(\mathcal{M}_1,\mathcal{M}_2)$ be the map from $\mathcal{A}^{\Omega}_{\partial\omega} \times C_0^{\frac{\alpha}{2};  \alpha}([0,T] \times \partial\Omega) \times C_0^{\frac{\alpha}{2};  \alpha}([0,T] \times \partial\omega)$ to $ C_0^{\frac{\alpha}{2};  \alpha}([0,T] \times \partial\Omega) \times C_0^{\frac{\alpha}{2};  \alpha}([0,T] \times \partial\omega)$ defined by
\begin{equation}\label{M}
\begin{aligned}
\mathcal{M}_1[\phi,\mu,\eta] & := \left(\frac{1}{2} I + W^\ast_{\partial\Omega} \right) [\mu] + \nu_{\Omega} \cdot \nabla  v^-_{\omega[\phi]}[\eta \circ (\phi^T)^{(-1)}] - f,
\\
\mathcal{M}_2[\phi,\mu,\eta] & :=
\left(-\frac{1}{2} I + W^\ast_{\partial\omega[\phi]} \right) [\eta \circ (\phi^T)^{(-1)}] \circ \phi^T \\
& \quad + \left({\nu_{\omega[\phi]}}\circ \phi \right) \cdot \nabla  v^+_{\Omega}[\mu] \circ \phi^T
\\
& \quad
- \mathcal{N}_G\Big({\phi,} v^+_{\Omega}[\mu]_{|[0,T] \times \partial\omega[\phi]}\circ \phi^T + V_{\partial\omega[\phi]}[\eta\circ \phi^{(-1)}] \circ \phi^T \Big),
\end{aligned}
\end{equation}
for all $(\phi,\mu, \eta) \in \mathcal{A}^{\Omega}_{\partial\omega} \times C_0^{\frac{\alpha}{2};  \alpha}([0,T] \times \partial\Omega) \times C_0^{\frac{\alpha}{2};  \alpha}([0,T] \times \partial\omega)$. {Here, we have used the following notation, which will also be adopted throughout the remainder of the paper: If $A$ is a subset of $\mathbb{R}^n$,
$T >0$ and $h$ is a map from $A$ to $\mathbb{R}^n$, we denote by $h^T$ the map from  $[0,T] \times A$
 to  $[0,T] \times \mathbb{R}^n$ defined by 
\[
h^T(t,x) := (t, h(x)) \quad \forall (t,x) \in  [0,T] \times A.
\]
}

  From the definition of $\mathcal{M}$, we readily deduce the following:

\begin{proposition}\label{prop M=0}
Let $\alpha \in \mathopen]0,1[$ and $T>0$. Let $\Omega$, $\omega$ be as in \eqref{introsetconditions}. Let $G$, $f$ be as in \eqref{introfunconditions}. Let
\begin{equation*}
(\phi,\mu, \eta) \in \mathcal{A}^{\Omega}_{\partial\omega} \times C_0^{\frac{\alpha}{2};  \alpha}([0,T] \times \partial\Omega) \times C_0^{\frac{\alpha}{2};  \alpha}([0,T] \times \partial\omega).
\end{equation*}
{Then the function 
\[
u_{\Omega,\omega[\phi]}[\mu,\eta\circ (\phi^T)^{(-1)}]:=(v^+_{\Omega} [\mu] + v^-_{\omega[\phi]}[\eta\circ (\phi^T)^{(-1)}])_{| [0,T] \times (\overline{\Omega} \setminus {\omega[\phi]})}
\] 
(cf.~\eqref{U})}  is a solution of problem \eqref{princeqpertu} if and only if 
\begin{equation}\label{M=0}
\mathcal{M}[\phi,\mu, \eta] = (0,0).
\end{equation}
\end{proposition}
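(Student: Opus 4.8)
The plan is to exploit the representation furnished by Lemma \ref{lemma rappr}, which already encodes the equation, the regularity and the initial condition, and then to match the two remaining boundary conditions of \eqref{princeqpertu} with the vanishing of the two components of $\mathcal{M}$. First I would set $\rho := \eta\circ(\phi^T)^{(-1)}$ and note that, since $\phi\in\mathcal{A}^\Omega_{\partial\omega}$, the pair $(\Omega,\omega[\phi])$ satisfies assumption \eqref{assOmtildom}. Hence Lemma \ref{lemma rappr}, applied with $\tilde\omega=\omega[\phi]$, shows that the function $u:=u_{\Omega,\omega[\phi]}[\mu,\rho]=(v^+_\Omega[\mu]+v^-_{\omega[\phi]}[\rho])_{|[0,T]\times(\overline\Omega\setminus\omega[\phi])}$ (cf.~\eqref{U}) automatically belongs to $C_0^{\frac{1+\alpha}{2};1+\alpha}([0,T]\times(\overline\Omega\setminus\omega[\phi]))$, solves the heat equation in $]0,T]\times(\Omega\setminus\overline{\omega[\phi]})$, and satisfies $u(0,\cdot)=0$. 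Therefore $u$ solves \eqref{princeqpertu} if and only if it fulfills the Neumann condition on $[0,T]\times\partial\Omega$ and the nonlinear Robin condition on $[0,T]\times\partial\omega[\phi]$, and it only remains to rewrite these two requirements as $\mathcal{M}_1[\phi,\mu,\eta]=0$ and $\mathcal{M}_2[\phi,\mu,\eta]=0$ (cf.~\eqref{M}).

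Second, I would compute the conormal derivative of $u$ on $\partial\Omega$. Since $\partial\Omega$ and $\partial\omega[\phi]$ are disjoint and $\overline{\omega[\phi]}\subseteq\Omega$, the term $v^-_{\omega[\phi]}[\rho]$ is smooth near $[0,T]\times\partial\Omega$ (Theorem \ref{thmsl} (i)) and contributes the classical derivative $\nu_\Omega\cdot\nabla v^-_{\omega[\phi]}[\rho]$, while the jump relations of Theorem \ref{thmsl} (iii) give $\frac{\partial}{\partial\nu_\Omega}v^+_\Omega[\mu]=\tfrac12\mu+W^\ast_{\partial\Omega}[\mu]$ on $[0,T]\times\partial\Omega$. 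Adding the two contributions, the Neumann condition $\frac{\partial}{\partial\nu_\Omega}u=f$ becomes $(\tfrac12 I+W^\ast_{\partial\Omega})[\mu]+\nu_\Omega\cdot\nabla v^-_{\omega[\phi]}[\rho]-f=0$, which is precisely $\mathcal{M}_1[\phi,\mu,\eta]=0$.

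Third, I would treat the Robin condition on $\partial\omega[\phi]$ analogously, watching the sign of the exterior jump and the pull-back through $\phi$. Here $v^+_\Omega[\mu]$ is smooth near $[0,T]\times\partial\omega[\phi]$ and contributes $\nu_{\omega[\phi]}\cdot\nabla v^+_\Omega[\mu]$, while Theorem \ref{thmsl} (iii) yields the exterior relation $\frac{\partial}{\partial\nu_{\omega[\phi]}}v^-_{\omega[\phi]}[\rho]=-\tfrac12\rho+W^\ast_{\partial\omega[\phi]}[\rho]$. Using the continuity of the single layer potential (Theorem \ref{thmsl} (i)) to write the trace of $u$ on $\partial\omega[\phi]$ as $v^+_\Omega[\mu]_{|[0,T]\times\partial\omega[\phi]}+V_{\partial\omega[\phi]}[\rho]$, the condition $\frac{\partial}{\partial\nu_{\omega[\phi]}}u(t,x)=G(t,x,u(t,x))$ on $[0,T]\times\partial\omega[\phi]$ reads
\[
\Big(-\tfrac12 I+W^\ast_{\partial\omega[\phi]}\Big)[\rho]+\nu_{\omega[\phi]}\cdot\nabla v^+_\Omega[\mu]=G\big(t,x,\,v^+_\Omega[\mu]_{|[0,T]\times\partial\omega[\phi]}+V_{\partial\omega[\phi]}[\rho]\big).
\]
Since $\phi^T$ is a bijection of $[0,T]\times\partial\omega$ onto $[0,T]\times\partial\omega[\phi]$, this identity holds on $[0,T]\times\partial\omega[\phi]$ if and only if its composition with $\phi^T$ holds on $[0,T]\times\partial\omega$. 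Using $\rho\circ\phi^T=\eta$ and recognizing the composed right-hand side as $\mathcal{N}_G(\phi,\cdot)$ (recall $\partial\omega[\phi]=\phi(\partial\omega)$, so that $G(t,\phi(y),\cdot)$ is exactly the Nemytskii operator of \eqref{introfunconditions}), the pulled-back identity becomes $\mathcal{M}_2[\phi,\mu,\eta]=0$.

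The argument is essentially a bookkeeping one, so I do not expect a genuine obstacle; the points deserving care are using the correct exterior jump formula, with its minus sign, on $\partial\omega[\phi]$, checking that the cross terms $v^-_{\omega[\phi]}[\rho]$ and $v^+_\Omega[\mu]$ really are smooth up to the opposite boundary and hence contribute no jump there, and making the pull-back step an equivalence rather than a one-sided implication, which is exactly where the invertibility of composition with $\phi^T$ enters.
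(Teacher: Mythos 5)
Your proposal is correct and follows essentially the same route as the paper's proof: apply Lemma \ref{lemma rappr} with $\tilde{\omega}=\omega[\phi]$ to dispose of the equation, regularity, and initial condition, then translate the Neumann and Robin conditions into $\mathcal{M}_1=0$ and $\mathcal{M}_2=0$ via the jump relations of Theorem \ref{thmsl} (iii) and the pull-back (change of variable) through $\phi^T$. The paper's proof is just a terser version of this argument; your added care about the exterior sign of the jump, the smoothness of the cross terms near the opposite boundary, and the bijectivity of $\phi^T$ making the pull-back an equivalence are exactly the points the paper leaves implicit.
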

\begin{proof}
By the regularity of $\phi \in \mathcal{A}^{\Omega}_{\partial\omega}$ we have that if $(\mu, \eta) \in C_0^{\frac{\alpha}{2};  \alpha}([0,T] \times \partial\Omega) \times C_0^{\frac{\alpha}{2};  \alpha}([0,T] \times \partial\omega)$, then
    \begin{equation*}(\mu,\eta \circ (\phi^T)^{(-1)}) \in C_0^{\frac{\alpha}{2};  \alpha}([0,T] \times \partial\Omega) \times C_0^{\frac{\alpha}{2};  \alpha}([0,T] \times \phi(\partial\omega)).
    \end{equation*}
    Moreover, since $\overline{\omega[\phi]} \subseteq \Omega$ (cf.~\eqref{A^Omega_omega}), we can apply Lemma \ref{lemma rappr} with $\tilde{\omega}=\omega[\phi]$. Then, by the jump relations for the single layer potential (cf.~Theorem \ref{thmsl} (iii)), by a change of variable on $\phi(\partial \omega)$, and by the definition of $\mathcal{M}$ in \eqref{M}, we deduce that the function
    \begin{equation*}
    u_{\Omega,\omega[\phi]}[\mu, \eta \circ (\phi^T)^{(-1)}] = (v^+_{\Omega} [\mu] + v^-_{\omega[\phi]}[\eta \circ (\phi^T)^{(-1)})_{|[0,T]\times \overline{\Omega} \setminus \omega[\phi]}
    \end{equation*}
    is a solution of problem \eqref{princeqpertu} if and only if \eqref{M=0} is satisfied. 
    \qed \end{proof}

To carry on our analysis, we will assume the existence of a solution $u_0 \in C_{0}^{\frac{1+\alpha}{2}; 1+\alpha}([0,T] \times (\overline{\Omega} \setminus \omega))$ to problem \eqref{princeqpertu} for {$\phi=\phi_0=\mathrm{id}_{\partial \omega}$}. In this paper, we will not explore conditions ensuring the existence of such solution, which can be obtained by several methods. For example, one may adapt the argument from \cite{DaMoMu22} and impose a growth condition on $G$ to apply the Leray-Schauder Fixed Point Theorem {(see also \cite{Mo25})}. For a {discussion on} nonlinear boundary value problems for parabolic equations, readers may consult Friedman \cite[Chapter~7]{Fr08}.

Combining Lemma \ref{lemma rappr} and Proposition \ref{prop M=0}, we can readily see that such solution $u_0$ can be expressed as a sum of single layer potentials. Namely, we have the following:

\begin{proposition}\label{prop u_0}
Let $\alpha \in \mathopen]0,1[$ and $T>0$. Let $\Omega$, $\omega$ be as in \eqref{introsetconditions}. Let $G$, $f$ be as in \eqref{introfunconditions}. Let $\phi_0 := \mathrm{id}_{\partial\omega}$. Assume that $u_0 \in C_{0}^{\frac{1+\alpha}{2}; 1+\alpha}([0,T] \times (\overline{\Omega} \setminus \omega))$ is a solution of
{\[
\begin{cases}
    \partial_t u - \Delta u = 0 & \quad\text{in } ]0,T] \times (\Omega \setminus \overline{\omega}), 
    \\
    \frac{\partial}{\partial \nu_{\Omega}}  u(t,x) = f(t,x) & \quad \forall (t,x)\in [0,T] \times \partial \Omega, 
    \\
        {\frac{\partial}{\partial \nu_{\omega}}  u(t,x)  = G(t,x,u(t,x))} & \quad \forall (t,x)\in [0,T] \times \partial \omega,
    \\
    u(0,\cdot)=0 & \quad \text{in } \overline{\Omega} \setminus \omega\, .
    \end{cases}
\]}
{Then} there exists a unique pair  $(\mu_0,\eta_0) \in C_0^{\frac{\alpha}{2};  \alpha}([0,T] \times \partial\Omega) \times C_0^{\frac{\alpha}{2};  \alpha}([0,T] \times \partial\omega)$  such that
\[
u_0=u_{\Omega,\omega[\phi_0]}[\mu_0,\eta_0\circ (\phi_0^T)^{(-1)}]{=u_{\Omega,\omega}[\mu_0,\eta_0]}\, .
\]
In particular, 
\begin{equation*}
\mathcal{M}[\phi_0,\mu_0,\eta_0] = (0,0)\, .
\end{equation*}
\end{proposition}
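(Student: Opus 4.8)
The plan is to read off the result as a direct combination of Lemma~\ref{lemma rappr} and Proposition~\ref{prop M=0}, exploiting the fact that the choice $\phi=\phi_0=\mathrm{id}_{\partial\omega}$ trivializes all the $\phi$-dependent bookkeeping in the representation formula and in the operator $\mathcal{M}$. First I would record the identifications $\omega[\phi_0]=\omega$ and $(\phi_0^T)^{(-1)}=\mathrm{id}_{[0,T]\times\partial\omega}$, so that in particular $\eta\circ(\phi_0^T)^{(-1)}=\eta$ for every admissible density $\eta$, and the representation \eqref{U} reduces to $u_{\Omega,\omega[\phi_0]}[\mu,\eta\circ(\phi_0^T)^{(-1)}]=u_{\Omega,\omega}[\mu,\eta]$.

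For the existence and uniqueness of the pair $(\mu_0,\eta_0)$, I would apply Lemma~\ref{lemma rappr} with $\tilde{\omega}=\omega$. The point to check is that the given $u_0$ lies in the codomain of the bijective map of that lemma: by hypothesis $u_0\in C_0^{\frac{1+\alpha}{2};1+\alpha}([0,T]\times(\overline{\Omega}\setminus\omega))$, while the first and last lines of the boundary value problem in the statement give $\partial_t u_0-\Delta u_0=0$ in $]0,T]\times(\Omega\setminus\overline{\omega})$ together with $u_0(0,\cdot)=0$. Hence $u_0$ belongs exactly to the target space of Lemma~\ref{lemma rappr}, and since the map $(\mu,\eta)\mapsto u_{\Omega,\omega}[\mu,\eta]$ is a bijection onto that space, there is one and only one pair $(\mu_0,\eta_0)\in C_0^{\frac{\alpha}{2};\alpha}([0,T]\times\partial\Omega)\times C_0^{\frac{\alpha}{2};\alpha}([0,T]\times\partial\omega)$ with $u_{\Omega,\omega}[\mu_0,\eta_0]=u_0$. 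I would stress that this step uses only the heat equation, the Schauder regularity of $u_0$, and the zero initial condition; the Neumann and Robin boundary data play no role in producing the representation.

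To obtain $\mathcal{M}[\phi_0,\mu_0,\eta_0]=(0,0)$, I would invoke Proposition~\ref{prop M=0} with $(\phi,\mu,\eta)=(\phi_0,\mu_0,\eta_0)$. By that proposition the function $u_{\Omega,\omega[\phi_0]}[\mu_0,\eta_0\circ(\phi_0^T)^{(-1)}]$ solves problem \eqref{princeqpertu} for $\phi=\phi_0$ if and only if $\mathcal{M}[\phi_0,\mu_0,\eta_0]=(0,0)$. By the previous step this function coincides with $u_0$, which is a solution of \eqref{princeqpertu} with $\phi=\phi_0$ by assumption; the equivalence in Proposition~\ref{prop M=0} therefore forces $\mathcal{M}[\phi_0,\mu_0,\eta_0]=(0,0)$, precisely the claim. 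Here is where the boundary data finally enter, through the definition of $\mathcal{M}$.

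As for difficulty, there is essentially no obstacle: the statement is a corollary assembled from two results already established in the excerpt. The only genuine verification is the membership of $u_0$ in the codomain of the bijection of Lemma~\ref{lemma rappr}---that is, that solving the full mixed problem in particular yields a function in the relevant parabolic Schauder space which satisfies the heat equation and has vanishing initial trace---after which bijectivity and the equivalence of Proposition~\ref{prop M=0} close the argument mechanically.
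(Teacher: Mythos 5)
Your proposal is correct and follows exactly the route the paper intends: the paper introduces Proposition~\ref{prop u_0} with the remark that it follows by ``combining Lemma~\ref{lemma rappr} and Proposition~\ref{prop M=0},'' which is precisely your argument---Lemma~\ref{lemma rappr} with $\tilde{\omega}=\omega$ gives the unique representing pair $(\mu_0,\eta_0)$, and the equivalence of Proposition~\ref{prop M=0} at $\phi=\phi_0$ then yields $\mathcal{M}[\phi_0,\mu_0,\eta_0]=(0,0)$. Your observation that the membership of $u_0$ in the codomain of the bijection (heat equation, Schauder regularity, zero initial trace) is the only point requiring verification is also accurate.
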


According to Proposition \ref{prop M=0}, the analysis of problem \eqref{princeqpertu} is reduced to that of equation \eqref{M=0}, which we intend to study using the Implicit Function Theorem for $C^{\infty}$ maps in Banach spaces (cf.~Deimling \cite[Theorem~15.1 \& Corollary 15.1]{De85}). Consequently, we need to examine the regularity of the map $\mathcal{M}$. If we were dealing with an elliptic equation, we would expect an analyticity result at this point (see, e.g., \cite{La07,LaRo04}). However, this is not the case for the heat equation.

Indeed, in the definition of $\mathcal{M}$, we have the pull-back of integral operators associated with heat layer potentials. {By} the results of \cite{DaLu23} we {only know that these are of class $C^\infty$}. This crucial difference from the elliptic case {obstructs} the proof of analyticity results and reflects the regularity of the fundamental solution $S_n$, which is analytic in $\mathbb{R}^n\setminus\{0\}$ in the case of elliptic operators but only $C^\infty$ on $\mathbb{R}^{1+n}\setminus\{(0,0)\}$ for the heat equation.

In the definition of $\mathcal{M}$ we also have a superposition operator associated with the function $G$. To handle this, we will assume that
{{\begin{equation}\label{condition NG*}
\text{$\mathcal{N}_{G}$ is of class $C^\infty$ from $\mathcal{A}_{\partial \omega}^\Omega \times C^{\frac{1+\alpha}{2};1+\alpha}([0,T] \times \partial\omega)$ to $ C^{\frac{\alpha}{2};\alpha}([0,T] \times \partial\omega)$.}
\end{equation}}
For example, a suitable choice of $G$ {ensuring that $\mathcal{N}_{G}$ is of class $C^\infty$ from $\mathcal{A}_{\partial \omega}^\Omega\times C^{\frac{1+\alpha}{2};1+\alpha}([0,T] \times \partial\omega)$ to $ C^{\frac{\alpha}{2};\alpha}([0,T] \times \partial\omega)$} is any polynomial function $G(t,x,\xi)=a_0(t)b_0(x)+a_1(t)b_1(x)\xi+\ldots+a_k(t)b_k(x)\xi^k$ with $a_0(0)=0$ and where for each $j \in \{1,\dots, k\}$ we have $a_j \in C^{\frac{\alpha}{2}}([0,T])$ and $b_j\in C^\infty(\Omega)$. Furthermore, we note that by assumptions \eqref{introfunconditions} and \eqref{condition NG*} and by computing the differentials of $\mathcal{N}_{G}$ (as it is done for example in Valent \cite[Proof of Thm.~4.1]{Va88}), one verifies that $\mathcal{N}_{G}$ is of class $C^\infty$ also from the product of $\mathcal{A}_{\partial \omega}^\Omega$ with the function space $C^{\frac{1+\alpha}{2};1+\alpha}_0([0,T] \times \partial\omega)$, consisting of functions that are zero at $t=0$, to $C^{\frac{\alpha}{2};\alpha}_0([0,T] \times \partial\omega)$.}


Finally, to study the regularity of the map $\mathcal{M}$, we also need the following technical lemma on the smoothness of certain maps related to the change of variables in integrals and to the {pull-back} of the
outer normal field.  A proof can be found in Lanza de Cristoforis and Rossi \cite[p.~166]{LaRo04} and Lanza de Cristoforis \cite[Prop.~1]{La07}.  

\begin{lemma}\label{lemma change of variable}
	Let $\alpha \in \mathopen]0,1[$. Let $\omega$ be an open bounded connected subset of $\mathbb{R}^n$ of class $C^{1,\alpha}$ with connected exterior $\mathbb{R}^n\setminus \overline{\omega}$. Let $\mathcal{A}_{\partial\omega}$ be as in \eqref{A_omega}. Then the following hold.
    \begin{itemize}
        \item[(i)] For each $\phi \in \mathcal{A}_{\partial\omega}$ there exists a unique $\tilde{\sigma}_n[\phi] \in C^{0,\alpha}(\partial\omega)$ such that 
        \[
    	\int_{\phi(\partial\omega)} f(y) \,d\sigma_y = \int_{\partial\omega} f(\phi(s)) \, \tilde{\sigma}_n[\phi](s) \,d\sigma_s \quad\forall f \in L^1(\phi(\partial\omega)).
    	\]
     Moreover, $\tilde{\sigma}_n[\phi] >0$ and the map taking $\phi$ to $\tilde{\sigma}_n[\phi] $ is real analytic from $\mathcal{A}_{\partial\omega}$ to $C^{0,\alpha}(\partial\omega)$.

     \item[(ii)] The map from  $\mathcal{A}_{\partial\omega}$ to $C^{0,\alpha}(\partial\omega)$  that  takes $\phi$ to ${\nu_{\omega[\phi]}} \circ \phi$ is real analytic.
    \end{itemize}
\end{lemma}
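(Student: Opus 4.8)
The plan is to express both objects in (i) and (ii) through a single geometric quantity, the generalized cross product of the tangential derivatives of $\phi$, and then to obtain real analyticity from the fact that nonlinear superposition operators by real-analytic functions act analytically on the Banach algebra $C^{0,\alpha}(\partial\omega)$.

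First I would fix a finite atlas of $\partial\omega$ with a subordinate partition of unity, and in each chart a fixed (hence $\phi$-independent) $C^{0,\alpha}$ orthonormal tangent frame $t_1,\dots,t_{n-1}$ of $T\partial\omega$. For $\phi\in\mathcal{A}_{\partial\omega}$ I set
\[
N[\phi]:=(\partial_{t_1}\phi)\times\cdots\times(\partial_{t_{n-1}}\phi),
\]
the generalized cross product in $\mathbb{R}^n$, whose $i$-th component is the signed $(n-1)\times(n-1)$ minor obtained by deleting the $i$-th row of the matrix with columns $\partial_{t_j}\phi$. Since the frame is orthonormal, one has $|N[\phi]|=\sqrt{\det(\partial_{t_i}\phi\cdot\partial_{t_j}\phi)}$, which is precisely the surface Jacobian of $\phi$; hence the change-of-variables identity in (i) holds with $\tilde{\sigma}_n[\phi]=|N[\phi]|$, and $N[\phi]/|N[\phi]|$ is the unit normal to $\phi(\partial\omega)$ up to a sign. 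These local expressions are chart-independent and agree on overlaps, so they glue to globally defined objects. Uniqueness of $\tilde{\sigma}_n[\phi]$ in $C^{0,\alpha}(\partial\omega)$ is immediate, since the integral identity, required for all $f\in L^1(\phi(\partial\omega))$, fixes $\tilde{\sigma}_n[\phi]$ almost everywhere and continuity determines it pointwise.

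Next I would track the dependence on $\phi$. The map $\phi\mapsto(\partial_{t_1}\phi,\dots,\partial_{t_{n-1}}\phi)$ is a first-order differential operator, hence bounded linear, and therefore real analytic, from $C^{1,\alpha}(\partial\omega,\mathbb{R}^n)$ to $C^{0,\alpha}(\partial\omega,(\mathbb{R}^n)^{n-1})$. The cross product is a continuous multilinear (thus polynomial) map, so the induced superposition operator $\phi\mapsto N[\phi]$ is real analytic into $C^{0,\alpha}(\partial\omega,\mathbb{R}^n)$, using that $C^{0,\alpha}(\partial\omega)$ is a Banach algebra. Because $d\phi(y)$ is injective for every $y\in\partial\omega$, the vectors $\partial_{t_j}\phi(y)$ are linearly independent and $N[\phi](y)\neq0$; by compactness of $\partial\omega$, $|N[\phi]|$ is bounded away from zero, and this lower bound is stable under small perturbations of $\phi$ in $\mathcal{A}_{\partial\omega}$.

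Finally, I would compose with the maps $v\mapsto|v|$ and $v\mapsto v/|v|$, which are real analytic on $\{v\in\mathbb{R}^n:|v|>0\}$. Invoking the real analyticity of superposition by a real-analytic function on $C^{0,\alpha}(\partial\omega)$ (the same mechanism used to handle $\mathcal{N}_G$ in assumption \eqref{condition NG*}, cf.\ Valent \cite{Va88}), together with the fact that $N[\phi]$ stays in a region bounded away from the origin, I conclude that $\phi\mapsto\tilde{\sigma}_n[\phi]=|N[\phi]|$ and $\phi\mapsto N[\phi]/|N[\phi]|$ are real analytic, and that $\tilde{\sigma}_n[\phi]>0$; this gives (i) and the analyticity in (ii) up to the choice of sign. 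For the outward normal one has $\nu_{\omega[\phi]}\circ\phi=\pm N[\phi]/|N[\phi]|$ with the sign locally constant in $\phi$, since it cannot jump along a continuous path of diffeomorphisms keeping $\omega[\phi]$ on the correct side, so real analyticity is preserved. I expect the main difficulty to lie in the clean justification that these nonlinear superposition operators — the polynomial cross product and, more delicately, the normalization $v\mapsto v/|v|$ — are genuinely real analytic between the Schauder spaces, and in the bookkeeping needed to patch the local frames into the intrinsic global quantities $\tilde{\sigma}_n[\phi]$ and $\nu_{\omega[\phi]}\circ\phi$.
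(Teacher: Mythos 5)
Your argument is, in substance, the proof that the paper points to rather than reproduces: the paper's ``proof'' of Lemma \ref{lemma change of variable} is a citation of Lanza de Cristoforis and Rossi \cite[p.~166]{LaRo04} and Lanza de Cristoforis \cite[Prop.~1]{La07}, and those references proceed exactly as you do --- the area element and the normal are written in terms of the generalized cross product of tangential derivatives of $\phi$ in local parametrizations, and real analyticity follows because these expressions are algebraic in the first derivatives of $\phi$ and the resulting superposition operators act analytically on the Banach algebra $C^{0,\alpha}(\partial\omega)$. Your handling of the analytic core is correct: $\phi\mapsto N[\phi]$ is polynomial in the bounded linear tangential derivatives; $|N[\phi]|^2$ is again polynomial; and since $|N[\phi]|$ stays uniformly away from $0$ on a neighborhood of any fixed $\phi\in\mathcal{A}_{\partial\omega}$, the normalizations $w\mapsto w^{-1/2}$ and $v\mapsto v/|v|$ can be treated by power series or functional calculus in the Banach algebra $C^{0,\alpha}$ (equivalently, by Valent-type composition theorems, as you indicate). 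Uniqueness and positivity of $\tilde{\sigma}_n[\phi]$ are immediate, as you say.

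The one step you assert rather than prove is the sign in (ii). The claim that $\varepsilon[\phi]\in\{\pm 1\}$, defined by $\nu_{\omega[\phi]}\circ\phi=\varepsilon[\phi]\,N[\phi]/|N[\phi]|$, is locally constant in $\phi$ does not follow from the path argument as you state it: one must know that the bounded component of $\mathbb{R}^n\setminus\phi(\partial\omega)$ is stable under perturbation, i.e.\ that if $x\in\omega[\phi_1]$ then $x\in\omega[\phi]$ for every $\phi$ in a $C^{1,\alpha}$-neighborhood of $\phi_1$, so that ``outward'' is preserved. This is true, but the standard justification is degree-theoretic (the degree of $y\mapsto(\phi(y)-x)/|\phi(y)-x|$ is invariant under homotopies avoiding $x$ and distinguishes $\omega[\phi]$ from the unbounded component), not mere continuity along a path of diffeomorphisms --- and note the sign is genuinely only \emph{locally} constant, since $\mathcal{A}_{\partial\omega}$ contains orientation-reversing maps for which it flips. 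With that stability supplied, and with the routine chart-patching via a partition of unity that you flag yourself, your proof is complete and coincides with the cited ones.
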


We can now prove that  the map $\mathcal{M}$ is of class $C^\infty$.

\begin{proposition}\label{prop Mrealanal}
  Let $\alpha \in \mathopen]0,1[$ and $T>0$. Let $\Omega$, $\omega$ be as in \eqref{introsetconditions}. Let $G$, $f$ be as in \eqref{introfunconditions}.  Let assumption \eqref{condition NG*} holds. Then the map $\mathcal{M}$ from $\mathcal{A}^{\Omega}_{\partial\omega} \times C_0^{\frac{\alpha}{2};  \alpha}([0,T] \times \partial\Omega) \times C_0^{\frac{\alpha}{2};  \alpha}([0,T] \times \partial\omega)$ to $C_0^{\frac{\alpha}{2};  \alpha}([0,T] \times \partial\Omega) \times C_0^{\frac{\alpha}{2};  \alpha}([0,T] \times \partial\omega)$ is of class $C^\infty$.
\end{proposition}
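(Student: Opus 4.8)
The plan is to regard $\mathcal{M}=(\mathcal{M}_1,\mathcal{M}_2)$ as built out of a handful of elementary blocks and to invoke the standard calculus of $C^\infty$ maps between open subsets of Banach spaces: continuous linear and multilinear maps are $C^\infty$, real-analytic maps are $C^\infty$, and sums, products and compositions of $C^\infty$ maps are again $C^\infty$. It therefore suffices to show that each summand appearing in \eqref{M} defines a $C^\infty$ map of $(\phi,\mu,\eta)$ with values in the correct Schauder space, while keeping track that every intermediate map lands in a space of functions vanishing at $t=0$, so that the final composition with $\mathcal{N}_G$ is legitimate and $\mathcal{M}$ takes values in the product of the spaces $C_0^{\frac{\alpha}{2};\alpha}$.

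First I would dispose of the terms that do not feel the singularity of the kernel. The operator $\frac{1}{2}I+W^\ast_{\partial\Omega}$ acts on the fixed boundary $\partial\Omega$ and is linear and continuous by Theorem \ref{thm V and W*}, hence $C^\infty$; likewise the constant datum $f$ is $C^\infty$. The remaining ``far-field'' terms are $\nu_\Omega\cdot\nabla v^-_{\omega[\phi]}[\eta\circ(\phi^T)^{(-1)}]_{|[0,T]\times\partial\Omega}$ in $\mathcal{M}_1$ and $(\nu_{\omega[\phi]}\circ\phi)\cdot\nabla v^+_\Omega[\mu]\circ\phi^T$ in $\mathcal{M}_2$. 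In both cases the evaluation points and the integration support stay uniformly apart, since $\overline{\omega[\phi]}\subseteq\Omega$ forces $\phi(\partial\omega)$ and $\partial\Omega$ to be disjoint. Using the change of variables of Lemma \ref{lemma change of variable}(i) I would rewrite the first term as an integral over the fixed boundary $\partial\omega$ with kernel $\nu_\Omega(x)\cdot\nabla_x S_n(t-\tau,x-\phi(s))\,\tilde{\sigma}_n[\phi](s)$, which is a smooth, non-singular function of its arguments because $S_n$ is $C^\infty$ away from the origin and $x-\phi(s)$ never vanishes; combined with the analyticity of $\phi\mapsto\tilde{\sigma}_n[\phi]$ from Lemma \ref{lemma change of variable}(i) and the results of \cite[Lemmas A.2, A.3]{DaLu23} on non-autonomous composition operators and on time-dependent integral operators with non-singular kernels, this yields a $C^\infty$ map, linear in $\eta$. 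The second far-field term is handled analogously, now with $v^+_\Omega[\mu]$ supported on $\partial\Omega$ and evaluated on the interior surface $\phi(\partial\omega)$, multiplied by the factor $\nu_{\omega[\phi]}\circ\phi$, which is real-analytic in $\phi$ by Lemma \ref{lemma change of variable}(ii).

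The genuine difficulty, and the step I expect to be the main obstacle, concerns the boundary operators living on the moving surface $\partial\omega[\phi]$ and then pulled back to $\partial\omega$ by $\phi^T$: namely $(-\frac{1}{2}I+W^\ast_{\partial\omega[\phi]})[\eta\circ(\phi^T)^{(-1)}]\circ\phi^T$ in $\mathcal{M}_2$ and the single layer trace $V_{\partial\omega[\phi]}[\eta\circ\phi^{(-1)}]\circ\phi^T$ appearing inside the nonlinear term. Here the kernel is genuinely singular and its dependence on $\phi$ cannot be treated by the elementary far-field argument. This is, however, precisely the content of the smoothness results for heat layer potentials of \cite{DaLu23}: the pulled-back operators depend in a $C^\infty$ fashion on $\phi$ and linearly, hence $C^\infty$, on the density, with values in the appropriate Schauder spaces. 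I would invoke those results directly, so that the hard analytical work is imported rather than redone.

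Finally I would assemble the nonlinear term. The argument of $\mathcal{N}_G$ is the sum $v^+_\Omega[\mu]_{|[0,T]\times\partial\omega[\phi]}\circ\phi^T+V_{\partial\omega[\phi]}[\eta\circ\phi^{(-1)}]\circ\phi^T$: the first summand is a far-field trace of $v^+_\Omega[\mu]$, smooth in $\phi$ and linear in $\mu$, and the second is the pulled-back single layer trace just discussed, both taking values in $C_0^{\frac{1+\alpha}{2};1+\alpha}([0,T]\times\partial\omega)$ by the mapping properties of the single layer potential in Theorem \ref{thmsl}. Hence the map $(\phi,\mu,\eta)\mapsto(\phi,\text{argument})$ is $C^\infty$ into $\mathcal{A}^\Omega_{\partial\omega}\times C_0^{\frac{1+\alpha}{2};1+\alpha}([0,T]\times\partial\omega)$, and composing with $\mathcal{N}_G$, which is $C^\infty$ on these subspaces by \eqref{condition NG*} and the remark following it, gives a $C^\infty$ map into $C_0^{\frac{\alpha}{2};\alpha}([0,T]\times\partial\omega)$. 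Collecting the blocks and using that sums and products of $C^\infty$ maps are $C^\infty$, both $\mathcal{M}_1$ and $\mathcal{M}_2$ are of class $C^\infty$, and therefore so is $\mathcal{M}$.
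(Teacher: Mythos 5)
Your proposal is correct and follows essentially the same route as the paper's proof: decompose $\mathcal{M}$ into summands, treat the pulled-back singular operators on the moving boundary $\partial\omega[\phi]$ via the smoothness results of \cite[Theorem~5.4]{DaLu23}, handle the non-singular ``far-field'' terms via Lemma \ref{lemma change of variable} together with \cite[Lemmas A.2, A.3]{DaLu23}, and compose with $\mathcal{N}_G$ using assumption \eqref{condition NG*}. The only difference is cosmetic: the paper confines the detailed analysis to $\mathcal{M}_2$ and leaves $\mathcal{M}_1$ to the reader, whereas you spell out the $\mathcal{M}_1$ term explicitly.
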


\begin{proof}
    We confine to analyze $\mathcal{M}_2$. The proof that $\mathcal{M}_1$ is smooth follows a similar but simpler argument, which we leave to the reader.

    We begin by noting that the map from $\mathcal{A}^{\Omega}_{\partial\omega} \times C_0^{\frac{\alpha}{2};  \alpha}([0,T] \times \partial\omega)$ to $C_0^{\frac{\alpha}{2};  \alpha}([0,T] \times \partial\omega)$ that  takes a pair $(\phi,\eta)$ to the function of the variables $(t,x) \in [0,T]\times \partial\omega$ defined by
    \begin{equation*}
    \begin{split}
  &  \left(-\frac{1}{2} I +  W^\ast_{\partial\omega[\phi]} \right) [\eta \circ (\phi^T)^{(-1)}] (\phi^T(t,x)) \\ & \qquad \qquad = -\frac{1}{2}\eta(t,x)  + W^\ast_{\partial\omega[\phi]} [ \eta\circ (\phi^T)^{(-1)}] (\phi^T(t,x)) 
  \end{split}  \end{equation*}
    is of class $C^\infty$, as we can verify by the results on the dependence of heat layer potentials upon perturbation of the support and of the density of \cite[Theorem~5.4]{DaLu23}.
    
    Then we turn to the second term in $\mathcal{M}_2$: The map from $\mathcal{A}^{\Omega}_{\partial\omega} \times C_0^{\frac{\alpha}{2};  \alpha}([0,T] \times \partial\Omega)$ to $C_0^{\frac{\alpha}{2};  \alpha}([0,T] \times \partial\omega)$ which takes $(\phi,\mu)$ to the function of the variables $(t,x) \in [0,T] \times \partial\omega$ defined by 
    \begin{equation*}
    \begin{split}
        {\nu_{\omega[\phi]}} (\phi(x)) \cdot \nabla &  v^+_{\Omega}[\mu] (t,\phi(x)) \\ &= \int_{0}^{t} \int_{\partial \Omega} {\nu_{\omega[\phi]}} (\phi(x)) \cdot \nabla S_{n}(t-\tau,\phi(x)-y) \mu(\tau, y) \,d\sigma_y d\tau 
    \end{split}
    \end{equation*}
    is of class $C^\infty$ by Lemma \ref{lemma change of variable} and by the results of \cite[Lemmas A.2, A.3]{DaLu23} on non-autonomous composition operators and on time-dependent integral operators with non-singular kernels (notice that $\phi(x) - y \neq 0$ for every $(x,y) \in {\partial\omega \times \partial\Omega}$ thanks to the assumption $\phi \in \mathcal{A}^{\Omega}_{\partial\omega}$, cf. \eqref{A^Omega_omega}).

    Next we consider the argument of $\mathcal{N}_G$: The map from $\mathcal{A}^{\Omega}_{\partial\omega} \times C_0^{\frac{\alpha}{2};  \alpha}([0,T] \times \partial\omega)$ to $C_0^{\frac{1+\alpha}{2};1+\alpha}([0,T] \times \partial\omega)$ taking a pair $(\phi,\eta)$ to the function
    \begin{equation*}
    V_{\partial\omega[\phi]}[\eta \circ (\phi^T)^{(-1)}] \circ \phi^T    
    \end{equation*}
    is of class $C^\infty$ {by the results on the dependence of heat layer potentials upon perturbation of the support and of the density of \cite[Theorem~ 5.4]{DaLu23}} and the map from $\mathcal{A}^{\Omega}_{\partial\omega} \times C_0^{\frac{\alpha}{2};  \alpha}([0,T] \times \partial\Omega)$ to $C_0^{\frac{1+\alpha}{2};1+\alpha}([0,T] \times \partial\omega)$ taking a pair $(\phi,\mu)$ to the function of the variables $(t,x) \in [0,T]\times \partial\omega$
    \begin{equation*}
        v^+_{\Omega}[\mu](t,\phi(x)) = \int_{0}^{t} \int_{\partial \Omega} S_{n}(t-\tau, \phi(x)-y) \mu(\tau, y) \,d\sigma_y d\tau
    \end{equation*}
    is of class $C^\infty$ by the results of \cite[Lemmas A.2, A.3]{DaLu23} on non-autonomous composition operators and on time-dependent integral operators with non-singular kernels (notice again that $\phi(x) - y \neq 0$ for every $(x,y) \in {\partial\omega \times \partial\Omega}$, cf. \eqref{A^Omega_omega}). 
    
    Hence, since composition of $C^\infty$ maps {is}
     $C^\infty$, assumption \eqref{condition NG*} ensures that the map $\mathcal{A}^{\Omega}_{\partial\omega} \times C_0^{\frac{\alpha}{2};  \alpha}([0,T] \times \partial\Omega) \times C_0^{\frac{\alpha}{2};  \alpha}([0,T] \times \partial\omega)$ to $C_0^{\frac{\alpha}{2};  \alpha}([0,T] \times \partial\omega)$ that takes a triple $(\phi,\mu,\eta)$ to the function 
    \begin{equation*}
        \mathcal{N}_G\Big({\phi,} v^+_{\Omega}[\mu]_{|[0,T] \times \partial\omega[\phi]} \circ \phi^T + V_{\partial\omega[\phi]}[\eta \circ (\phi^T)^{(-1)}] \circ \phi^T \Big)
    \end{equation*}
    is of class $C^\infty$. Thus, the validity of the statement follows.
\qed \end{proof}

{{By computing the Frech\'et derivative of $\mathcal{N}_G$, we first note that under assumption \eqref{condition NG*} the  partial derivative $\partial_\xi G(t_0,x_0,\xi_0)$ of the function $G$ with respect to {the third variable exists at any point $(t_0,x_0,\xi_0)\in [0,T] \times \Omega\times \mathbb{R}$.} Then, using assumption \eqref{condition NG*} and the standard rules of calculus in Banach spaces, we can derive the following formula for the first-order partial differential of $(\phi,u)\mapsto \mathcal{N}_G(\phi,u)$ with respect to $u$:  At any given pair $(\phi,u) \in \mathcal{A}_{\partial \omega}^\Omega \times C^{\frac{1+\alpha}{2};1+\alpha}([0,T] \times \partial\omega)$, we have 
\[
\partial_u \mathcal{N}_G(\phi,u)\,.\,h = \mathcal{N}_{\partial_\xi G}(\phi,u) \,h \qquad \forall h \in C^{\frac{1+\alpha}{2};1+\alpha}([0,T] \times \partial\omega),
\]
where $\partial_\xi G$ denotes the partial derivative of the function $G$ with respect to {the third variable {(cf.~Valent \cite[Proof of Thm.~4.1]{Va88}).} We also note that $\mathcal{N}_{\partial_\xi G}(\phi,u)$ belongs to $C^{\frac{\alpha}{2};  \alpha}([0,T] \times \partial\omega)$ for all $(\phi,u) \in \mathcal{A}_{\partial \omega}^\Omega \times C^{\frac{1+\alpha}{2};1+\alpha}([0,T] \times \partial\omega)$.}}}



We can now prove the following proposition, where we show that the partial differential of $\mathcal{M}$ with respect to the densities $(\mu,\eta)$ is invertible at $(\phi_0,\mu_0,\eta_0)$. This constitutes an essential step to apply the  Implicit Function Theorem to equation \eqref{M=0}.

\begin{proposition}\label{d M prop}
Let $\alpha \in \mathopen]0,1[$ and $T>0$. Let $\Omega$, $\omega$ be as in \eqref{introsetconditions}. Let $G$, $f$ be as in \eqref{introfunconditions}. Let assumption \eqref{condition NG*} hold. Let $\phi_0$, $u_0$, $(\mu_0,\eta_0)$  be as in Proposition \ref{prop u_0}. Then the partial differential of $\mathcal{M}$ with respect to $(\mu,\eta)$ evaluated at the point $(\phi_0,\mu_0,\eta_0)$, which we denote by 
\begin{equation}\label{partdiff M}
\partial_{(\mu,\eta)} \mathcal{M}[\phi_0,\mu_0, \eta_0],
\end{equation}
is an homeomorphism from $C_0^{\frac{\alpha}{2};  \alpha}([0,T] \times \partial\Omega) \times C_0^{\frac{\alpha}{2};  \alpha}([0,T] \times \partial\omega)$ into itself.
\end{proposition}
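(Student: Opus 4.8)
The plan is to identify the operator $\partial_{(\mu,\eta)}\mathcal{M}[\phi_0,\mu_0,\eta_0]$ with the operator $\mathcal{J}_\beta$ of Proposition~\ref{prop J_beta} for a suitable Robin coefficient $\beta$, and then to invoke that proposition to conclude. The key simplification is that at $\phi=\phi_0=\mathrm{id}_{\partial\omega}$ we have $\omega[\phi_0]=\omega$ and the pull-back operators $\,\cdot\circ\phi_0^T$ and $\,\cdot\circ(\phi_0^T)^{(-1)}$ reduce to the identity, so that $\mathcal{M}[\phi_0,\cdot,\cdot]$ becomes a concrete expression in the fixed layer potentials on $\partial\Omega$ and $\partial\omega$, whose differential with respect to the densities is straightforward to compute.

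First I would differentiate $\mathcal{M}_1$. Since $\mathcal{M}_1[\phi_0,\cdot,\cdot]$ is affine in $(\mu,\eta)$, with $-f$ as its only constant term, its partial differential with respect to $(\mu,\eta)$ is the linear map
\[
(\bar\mu,\bar\eta)\mapsto \Bigl(\tfrac{1}{2}I+W^\ast_{\partial\Omega}\Bigr)[\bar\mu]+\nu_\Omega\cdot\nabla v^-_\omega[\bar\eta]_{|[0,T]\times\partial\Omega},
\]
which is precisely $\mathcal{J}_{\beta,1}[\bar\mu,\bar\eta]$ as in \eqref{J_beta eq} (with $\tilde\omega=\omega$). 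I would then treat $\mathcal{M}_2$. Its first two summands are linear in $(\mu,\eta)$ and, at $\phi_0$, differentiate to
\[
\Bigl(-\tfrac{1}{2}I+W^\ast_{\partial\omega}\Bigr)[\bar\eta]+\nu_\omega\cdot\nabla v^+_\Omega[\bar\mu]_{|[0,T]\times\partial\omega}.
\]
For the third summand I would apply the chain rule, noting that the argument of $\mathcal{N}_G$ is linear in $(\mu,\eta)$ and that, evaluated at $(\mu_0,\eta_0)$, it equals $v^+_\Omega[\mu_0]_{|[0,T]\times\partial\omega}+V_{\partial\omega}[\eta_0]$, i.e.\ the trace of $u_0$ on $[0,T]\times\partial\omega$. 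Using the formula $\partial_u\mathcal{N}_G(\phi_0,u_0).h=\mathcal{N}_{\partial_\xi G}(\phi_0,u_0)\,h$ recorded in the remark preceding the statement, and setting
\[
\beta(t,x):=\partial_\xi G\bigl(t,x,u_0(t,x)\bigr)\qquad\forall(t,x)\in[0,T]\times\partial\omega,
\]
the differential of the third summand becomes $-\beta\bigl(v^+_\Omega[\bar\mu]_{|[0,T]\times\partial\omega}+V_{\partial\omega}[\bar\eta]\bigr)$. Summing the three contributions yields exactly $\mathcal{J}_{\beta,2}[\bar\mu,\bar\eta]$.

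It then follows that $\partial_{(\mu,\eta)}\mathcal{M}[\phi_0,\mu_0,\eta_0]=\mathcal{J}_\beta$ with the $\beta$ above. By the same remark, $\beta=\mathcal{N}_{\partial_\xi G}(\phi_0,u_0)$ belongs to $C^{\frac{\alpha}{2};\alpha}([0,T]\times\partial\omega)$, so Proposition~\ref{prop J_beta}, applied with $\tilde\omega=\omega$, guarantees that $\mathcal{J}_\beta$ is a linear homeomorphism of $C_0^{\frac{\alpha}{2};\alpha}([0,T]\times\partial\Omega)\times C_0^{\frac{\alpha}{2};\alpha}([0,T]\times\partial\omega)$ onto itself, which is the assertion. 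I expect the only delicate point to be the bookkeeping in the identification of the linearized Nemytskii term with the Robin coefficient of \eqref{J_beta eq}: one must verify that the argument of $\mathcal{N}_G$ at $(\mu_0,\eta_0)$ is genuinely the boundary trace of $u_0$, so that $\beta$ has the required Hölder regularity and the two operators agree term by term. Everything else reduces to the linearity of the layer-potential summands and the already established $C^\infty$ regularity of $\mathcal{M}$.
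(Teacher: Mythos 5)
Your proposal is correct and follows essentially the same route as the paper: compute the partial differential term by term (linearity of the layer-potential summands, chain rule for the Nemytskii term), identify it with $\mathcal{J}_\beta$ for $\beta=\mathcal{N}_{\partial_\xi G}\bigl(\phi_0,v^+_{\Omega}[\mu_0]_{|[0,T]\times\partial\omega}+V_{\partial\omega}[\eta_0]\bigr)$, and invoke Proposition~\ref{prop J_beta} with $\tilde\omega=\omega$. Your identification of the argument of $\mathcal{N}_G$ at $(\mu_0,\eta_0)$ with the trace of $u_0$ is a harmless reformulation of the paper's choice of $\beta$, valid by the continuity of the single layer potential across $\partial\omega$.
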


\begin{proof}
By standard calculus in Banach spaces, we can verify that the partial differential \eqref{partdiff M} is the linear and continuous operator from $C_0^{\frac{\alpha}{2};  \alpha}([0,T] \times \partial\Omega) \times C_0^{\frac{\alpha}{2};  \alpha}([0,T] \times \partial\omega)$ {to} itself {given} by
\begin{equation}\label{d M eq}
\begin{aligned}
&\partial_{(\mu,\eta)} \mathcal{M}_1[\phi_0,\mu_0, \eta_0]\,.\,(\mu,\eta)  {=} \left(\frac{1}{2} I + W^\ast_{\partial\Omega} \right) [\mu] + {\nu_{\Omega} \cdot \nabla  v^-_{\omega}[\eta]},
\\
&\partial_{(\mu,\eta)} \mathcal{M}_2[\phi_0,\mu_0, \eta_0]\,.\,(\mu,\eta)  {=}
\left(-\frac{1}{2} I + W^\ast_{\partial\omega} \right) [\eta] + {\nu_{\omega}} \cdot \nabla  v^+_{\Omega}[\mu]
\\
& \quad
- \mathcal{N}_{\partial_\xi G}\Big({\phi_0,}v^+_{\Omega}[\mu_0]_{|[0,T]\times\partial\omega} +  {V_{\partial\omega}}[\eta_0]\Big) (v^+_{\Omega}[\mu]_{|[0,T]\times\partial\omega} +  {V_{\partial\omega}}[\eta]),
\end{aligned}
\end{equation}
for all $(\mu,\eta) \in C_0^{\frac{\alpha}{2};  \alpha}([0,T] \times \partial\Omega) \times C_0^{\frac{\alpha}{2};  \alpha}([0,T] \times \partial\omega)$. Then, comparing \eqref{d M eq} with \eqref{J_beta eq}, Proposition \ref{prop J_beta} with the choice
\begin{equation*}
    \beta := \mathcal{N}_{\partial_\xi G}({\phi_0,}v^+_{\Omega}[\mu_0]_{|[0,T]\times\partial\omega} +  {V_{\partial\omega}}[\eta_0])
\end{equation*}
implies the validity of the statement. Notice that $\beta\in C^{\frac{\alpha}{2};  \alpha}([0,T] \times \partial {\omega})$ by assumption \eqref{condition NG*}.
\qed \end{proof}

With Propositions \ref{prop Mrealanal} and   \ref{d M prop} we have all the necessary ingredients to apply the Implicit Function Theorem {(cf.~Deimling \cite[Theorem~15.1 \& Corollary 15.1]{De85})} to equation \eqref{M=0}. We obtain the following:

\begin{theorem}\label{Lambda Thm}
  Let $\alpha \in \mathopen]0,1[$ and $T>0$. Let $\Omega$, $\omega$ be as in \eqref{introsetconditions}. Let $G$, $f$ be as in \eqref{introfunconditions}.  Let assumption  \eqref{condition NG*} hold. Let $\phi_0$, $u_0$, $(\mu_0,\eta_0)$  be as in Proposition \ref{prop u_0}. Then, there exist two open neighborhoods $Q_0$ of $\phi_0$ in $\mathcal{A}^{\Omega}_{\partial\omega}$ and $H_0$ of $(\mu_0,\eta_0)$ in $C_0^{\frac{\alpha}{2};  \alpha}([0,T] \times \partial\Omega) \times C_0^{\frac{\alpha}{2};  \alpha}([0,T] \times \partial\omega)$, and a $C^\infty$ map 
    \begin{equation*}
        \Lambda := (\Lambda_1,\Lambda_2): Q_0 \to H_0
    \end{equation*}
    such that the set of zeros of $\mathcal{M}$ in $Q_0 \times H_0$ coincides with the graph of the function $\Lambda$.  In particular,
   { \begin{equation*}
\mathcal{M}[ \phi, \Lambda_1[\phi],\Lambda_2[\phi]]=0 \quad \forall \phi \in Q_0\, ,\qquad   \Lambda[\phi_0]= 
    (\Lambda_1[\phi_0],\Lambda_2[\phi_0])= (\mu_0,\eta_0).
    \end{equation*}}
\end{theorem}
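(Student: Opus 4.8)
The plan is to obtain Theorem \ref{Lambda Thm} as a direct application of the Implicit Function Theorem for $C^\infty$ maps between open subsets of Banach spaces, in the form stated in Deimling \cite[Theorem~15.1 \& Corollary~15.1]{De85}. Since all the analytic labour has already been carried out in the preceding propositions, what remains is essentially to verify that the three standard hypotheses of that theorem hold at the base point $(\phi_0,\mu_0,\eta_0)$ and to transcribe its conclusion into the wording of the statement.

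First I would fix the functional-analytic setting. The domain $\mathcal{A}^{\Omega}_{\partial\omega}$ is an open subset of the Banach space $C^{1,\alpha}(\partial\omega,\mathbb{R}^n)$: both the injectivity of $\phi$ and the injectivity of $d\phi(y)$ at each $y\in\partial\omega$ are open conditions, and the constraint $\overline{\omega[\phi]}\subseteq\Omega$ is preserved under sufficiently small $C^{1,\alpha}$ perturbations of $\phi$. Consequently the product
$\mathcal{A}^{\Omega}_{\partial\omega}\times C_0^{\frac{\alpha}{2};\alpha}([0,T]\times\partial\Omega)\times C_0^{\frac{\alpha}{2};\alpha}([0,T]\times\partial\omega)$
is an open subset of a product of Banach spaces, which is precisely the kind of domain on which the Implicit Function Theorem operates.

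Next I would assemble the three hypotheses. By Proposition \ref{prop Mrealanal}, the map $\mathcal{M}$ is of class $C^\infty$ on this open set. By Proposition \ref{prop u_0}, the base point satisfies $\mathcal{M}[\phi_0,\mu_0,\eta_0]=(0,0)$. Finally, by Proposition \ref{d M prop}, the partial differential $\partial_{(\mu,\eta)}\mathcal{M}[\phi_0,\mu_0,\eta_0]$ is a linear homeomorphism of $C_0^{\frac{\alpha}{2};\alpha}([0,T]\times\partial\Omega)\times C_0^{\frac{\alpha}{2};\alpha}([0,T]\times\partial\omega)$ onto itself. These are exactly the regularity, vanishing, and invertibility conditions demanded by Deimling \cite[Theorem~15.1 \& Corollary~15.1]{De85}, where here $\phi$ plays the role of the parameter and $(\mu,\eta)$ that of the unknown to be solved for.

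Applying the theorem then produces open neighborhoods $Q_0$ of $\phi_0$ in $\mathcal{A}^{\Omega}_{\partial\omega}$ and $H_0$ of $(\mu_0,\eta_0)$, together with a $C^\infty$ map $\Lambda=(\Lambda_1,\Lambda_2)\colon Q_0\to H_0$ whose graph coincides with the zero set of $\mathcal{M}$ inside $Q_0\times H_0$; in particular $\mathcal{M}[\phi,\Lambda_1[\phi],\Lambda_2[\phi]]=0$ for all $\phi\in Q_0$ and $\Lambda[\phi_0]=(\mu_0,\eta_0)$. I expect essentially no obstacle at this stage beyond the bookkeeping just described: the genuine difficulties were the $C^\infty$-regularity of the pulled-back heat layer potentials (Proposition \ref{prop Mrealanal}, resting on \cite{DaLu23}) and the invertibility of the linearized boundary operator (Proposition \ref{d M prop}, resting on Proposition \ref{prop J_beta} and the uniqueness Lemma \ref{beta prob lemma}), both already established. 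The only point that merits a little care is confirming the openness of $\mathcal{A}^{\Omega}_{\partial\omega}$ in $C^{1,\alpha}(\partial\omega,\mathbb{R}^n)$, so that the Banach-space formulation of the Implicit Function Theorem genuinely applies.
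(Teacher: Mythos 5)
Your proposal is correct and follows exactly the paper's own argument: the paper derives Theorem \ref{Lambda Thm} as an immediate application of the Implicit Function Theorem of Deimling \cite[Theorem~15.1 \& Corollary~15.1]{De85}, combining the smoothness of $\mathcal{M}$ (Proposition \ref{prop Mrealanal}), the fact that $\mathcal{M}[\phi_0,\mu_0,\eta_0]=(0,0)$ (Proposition \ref{prop u_0}), and the invertibility of $\partial_{(\mu,\eta)}\mathcal{M}[\phi_0,\mu_0,\eta_0]$ (Proposition \ref{d M prop}). Your additional check on the openness of $\mathcal{A}^{\Omega}_{\partial\omega}$ in $C^{1,\alpha}(\partial\omega,\mathbb{R}^n)$ is a sound (if implicit in the paper) piece of bookkeeping and does not change the argument.
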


Theorem \ref{Lambda Thm} yields a family of solutions smoothly depending on $\phi$ for the system of integral equations in \eqref{M=0}, which in turn can be used to generate a family of solutions for the equivalent perturbed boundary value problem \eqref{princeqpertu} (cf.~Proposition \ref{prop M=0}, see also {Proposition} \ref{prop u_0} and Theorem \ref{Lambda Thm}).

\begin{theorem}\label{u_phi thm}
Let $\alpha \in \mathopen]0,1[$ and $T>0$. Let $\Omega$, $\omega$ be as in \eqref{introsetconditions}. Let $G$, $f$ be as in \eqref{introfunconditions}.  Let assumption  \eqref{condition NG*} hold. Let $\phi_0$, $u_0$, $(\mu_0,\eta_0)$  be as in Proposition \ref{prop u_0}.  Let $Q_0$ and $\Lambda := (\Lambda_1,\Lambda_2)$ be as in Theorem \ref{Lambda Thm}. For each $\phi \in Q_0$, let
{\begin{equation*}
\begin{split}
u_{\phi} &:= u_{\Omega,\omega[\phi]}[\Lambda_1[\phi], \Lambda_2[\phi] \circ (\phi^T)^{(-1)}]\\
&=(v^+_{\Omega} [\Lambda_1[\phi]] + v^-_{\omega[\phi]}[\Lambda_2[\phi]\circ (\phi^T)^{(-1)}])_{| [0,T] \times (\overline{\Omega} \setminus {\omega[\phi]})}
\end{split}
\end{equation*}
(cf.~\eqref{U}).} Then $u_{\phi}$ is a solution of  \eqref{princeqpertu} and $u_{\phi_0}= u_0$.
\end{theorem}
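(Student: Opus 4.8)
The plan is to observe that this theorem is essentially a direct corollary of Theorem \ref{Lambda Thm} combined with the equivalence established in Proposition \ref{prop M=0}, so that the substantive work has already been carried out in the preceding results; what remains is to assemble the pieces and to verify the limiting identity at $\phi_0$.

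First I would prove that $u_\phi$ solves \eqref{princeqpertu} for every $\phi \in Q_0$. Fix $\phi \in Q_0$ and set $(\mu,\eta) := (\Lambda_1[\phi],\Lambda_2[\phi])$. By Theorem \ref{Lambda Thm} this pair lies in $H_0$ and satisfies $\mathcal{M}[\phi,\Lambda_1[\phi],\Lambda_2[\phi]] = (0,0)$, i.e.\ equation \eqref{M=0} holds. Since $\phi \in Q_0 \subseteq \mathcal{A}^\Omega_{\partial\omega}$ guarantees in particular that $\overline{\omega[\phi]} \subseteq \Omega$ (cf.~\eqref{A^Omega_omega}), the triple $(\phi,\Lambda_1[\phi],\Lambda_2[\phi])$ lies in the domain of $\mathcal{M}$, and Proposition \ref{prop M=0} applies verbatim. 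It then yields that the function
\[
u_{\Omega,\omega[\phi]}[\Lambda_1[\phi],\Lambda_2[\phi]\circ (\phi^T)^{(-1)}] = u_\phi
\]
is a solution of \eqref{princeqpertu}. In particular $u_\phi$ automatically belongs to $C_0^{\frac{1+\alpha}{2};1+\alpha}([0,T]\times(\overline{\Omega}\setminus\omega[\phi]))$ by Lemma \ref{lemma rappr} applied with $\tilde{\omega}=\omega[\phi]$, so no separate regularity check is needed.

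Next I would verify the identity $u_{\phi_0}=u_0$. By Theorem \ref{Lambda Thm} we have $\Lambda[\phi_0]=(\mu_0,\eta_0)$, whence
\[
u_{\phi_0}=u_{\Omega,\omega[\phi_0]}[\mu_0,\eta_0\circ (\phi_0^T)^{(-1)}].
\]
Since $\phi_0=\mathrm{id}_{\partial\omega}$ we have $\phi_0^T=\mathrm{id}$, so that $(\phi_0^T)^{(-1)}=\mathrm{id}$ and $\omega[\phi_0]=\omega$; therefore $\eta_0\circ(\phi_0^T)^{(-1)}=\eta_0$ and the right-hand side reduces to $u_{\Omega,\omega}[\mu_0,\eta_0]$. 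Finally, Proposition \ref{prop u_0} gives precisely $u_0=u_{\Omega,\omega}[\mu_0,\eta_0]$, and hence $u_{\phi_0}=u_0$.

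I do not expect any genuine obstacle here: each nontrivial ingredient---the integral reformulation (Proposition \ref{prop M=0}), the solvability and smoothness of the implicit map (Theorem \ref{Lambda Thm}), and the representation of $u_0$ (Proposition \ref{prop u_0})---has already been established. The only points deserving a line of care are the trivialization of the pull-backs $(\phi_0^T)^{(-1)}$ at $\phi_0$ and the remark that membership of $\phi$ in $Q_0$ keeps us within the domain of $\mathcal{M}$, so that Proposition \ref{prop M=0} may legitimately be invoked.
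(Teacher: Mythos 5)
Your proposal is correct and matches the paper's (implicit) argument exactly: the paper states this theorem without a separate proof, precisely because it follows by combining Theorem \ref{Lambda Thm} (so that $\mathcal{M}[\phi,\Lambda_1[\phi],\Lambda_2[\phi]]=(0,0)$ for $\phi\in Q_0$), the equivalence of Proposition \ref{prop M=0}, and the representation $u_0=u_{\Omega,\omega}[\mu_0,\eta_0]$ of Proposition \ref{prop u_0}, together with the trivialization $(\phi_0^T)^{(-1)}=\mathrm{id}$ and $\omega[\phi_0]=\omega$ at the identity. Your extra remarks on regularity via Lemma \ref{lemma rappr} and on $Q_0\subseteq\mathcal{A}^\Omega_{\partial\omega}$ keeping the triple in the domain of $\mathcal{M}$ are accurate and harmless.
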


In the following theorem, {we take a bounded open set $\Omega_\mathtt{int}$  such that
\[
{\overline \Omega_\mathtt{int}}\subseteq  \Omega\setminus \overline{\omega}
\] 
(see Figure \ref{fig:2}) and for $\phi$ close to the identity we show that restriction of the solution $u_\phi$ to ${\overline \Omega_\mathtt{int}}$  depends smoothly on  $\phi$.}

\begin{figure}[!htb]
\centering
\includegraphics[width=3.2in]{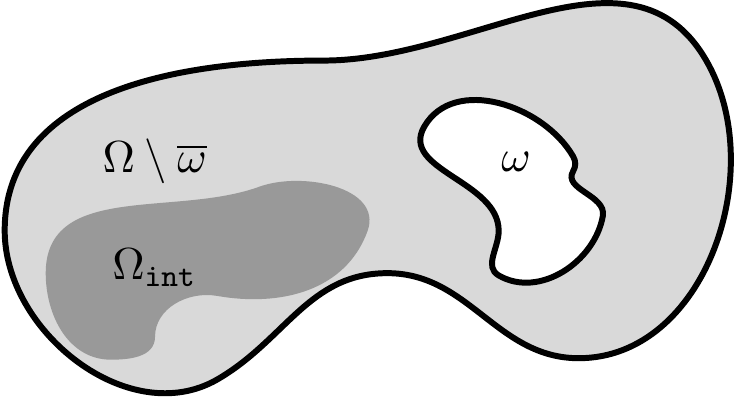}
\caption{{\it The sets $\omega$, $\Omega\setminus \overline{\omega}$, and $\Omega_\mathtt{int}$.}}\label{fig:2} 
\end{figure}

\begin{theorem}\label{thm:smoothrep}
  Let $\alpha \in \mathopen]0,1[$ and $T>0$. Let $\Omega$, $\omega$ be as in \eqref{introsetconditions}. Let $G$, $f$ be as in \eqref{introfunconditions}. Let assumption  \eqref{condition NG*} hold. Let $\phi_0$, $u_0$, $(\mu_0,\eta_0)$  be as in Proposition \ref{prop u_0}. Let $Q_0$ and $\Lambda := (\Lambda_1,\Lambda_2)$ be as in Theorem \ref{Lambda Thm} and let {$\{u_\phi\}_{\phi \in Q_0}$} be as in Theorem  \ref{u_phi thm}.
{Let $\Omega_\mathtt{int}$ be a bounded open subset of $\Omega\setminus \overline{\omega}$ such that
\[
{\overline \Omega_\mathtt{int}}\subseteq  \Omega\setminus \overline{\omega}
\]} 
and let $Q_\mathtt{int} \subseteq Q_0$ be an open neighborhood of $\phi_0$ such that  
\begin{equation}\label{cond Omega int}
    \overline{\Omega_\mathtt{int}} \subseteq \Omega \setminus \overline{\omega[\phi]} \quad \forall \phi \in Q_\mathtt{int}.
\end{equation}
Then the map from $Q_\mathtt{int}$ to $C_{0}^{\frac{1+\alpha}{2}; 1+\alpha}([0,T] \times \overline{\Omega_\mathtt{int}})$ that takes $\phi$ to $\left(u_{\phi}\right)_{|[0,T]\times\overline{\Omega_\mathtt{int}}}$ is of class $C^\infty$.
\end{theorem}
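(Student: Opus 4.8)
The plan is to decompose the restricted solution $\left(u_\phi\right)_{|[0,T]\times\overline{\Omega_\mathtt{int}}}$ into its two single-layer contributions $v^+_\Omega[\Lambda_1[\phi]]$ and $v^-_{\omega[\phi]}[\Lambda_2[\phi]\circ(\phi^T)^{(-1)}]$ and to treat each of them separately, exploiting the crucial fact that, for $\phi \in Q_\mathtt{int}$, the set $\overline{\Omega_\mathtt{int}}$ stays at positive distance from both integration supports $\partial\Omega$ and $\phi(\partial\omega)=\partial\omega[\phi]$ (cf.~\eqref{cond Omega int}). As a consequence, every kernel appearing below is non-singular, and the argument reduces to the smoothness results for non-autonomous composition operators and for time-dependent integral operators with non-singular kernels of \cite[Lemmas A.2, A.3]{DaLu23}, together with the $C^\infty$ dependence of $\Lambda=(\Lambda_1,\Lambda_2)$ from Theorem~\ref{Lambda Thm}.

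For the first term, I would observe that the map $\mu \mapsto \left(v^+_\Omega[\mu]\right)_{|[0,T]\times\overline{\Omega_\mathtt{int}}}$ is linear and continuous from $C_0^{\frac{\alpha}{2};\alpha}([0,T]\times\partial\Omega)$ to $C_0^{\frac{1+\alpha}{2};1+\alpha}([0,T]\times\overline{\Omega_\mathtt{int}})$; this follows from Theorem~\ref{thmsl}~(ii) by restriction, or directly from \cite[Lemmas A.2, A.3]{DaLu23} since for $x \in \overline{\Omega_\mathtt{int}}$ and $y \in \partial\Omega$ the kernel $S_{n}(t-\tau,x-y)$ is non-singular. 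Composing this fixed linear continuous operator with the $C^\infty$ map $\phi \mapsto \Lambda_1[\phi]$ of Theorem~\ref{Lambda Thm} yields that $\phi \mapsto \left(v^+_\Omega[\Lambda_1[\phi]]\right)_{|[0,T]\times\overline{\Omega_\mathtt{int}}}$ is of class $C^\infty$.

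For the second term, I would first use the change of variables of Lemma~\ref{lemma change of variable}~(i) to rewrite, for $(t,x)\in[0,T]\times\overline{\Omega_\mathtt{int}}$,
\[
v^-_{\omega[\phi]}\big[\Lambda_2[\phi]\circ(\phi^T)^{(-1)}\big](t,x) = \int_0^t \int_{\partial\omega} S_{n}(t-\tau,x-\phi(s))\,\Lambda_2[\phi](\tau,s)\,\tilde{\sigma}_n[\phi](s)\,d\sigma_s\,d\tau,
\]
so that the integration now takes place over the fixed manifold $\partial\omega$. The dependence on $\phi$ then enters only through the argument $x-\phi(s)$ of the kernel, through the real-analytic factor $\tilde{\sigma}_n[\phi]$ of Lemma~\ref{lemma change of variable}~(i), and through the $C^\infty$ density $\Lambda_2[\phi]$ of Theorem~\ref{Lambda Thm}. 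Since $x-\phi(s)\neq 0$ for all $(x,s)\in\overline{\Omega_\mathtt{int}}\times\partial\omega$ (again by \eqref{cond Omega int}), the kernel is non-singular, and the results of \cite[Lemmas A.2, A.3]{DaLu23} show that the resulting map from $Q_\mathtt{int}$ to $C_0^{\frac{1+\alpha}{2};1+\alpha}([0,T]\times\overline{\Omega_\mathtt{int}})$ is of class $C^\infty$.

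The only obstacle---and it is a mild one---is the bookkeeping of the joint $\phi$-dependence in the second term, where $\phi$ appears simultaneously inside the kernel, in the Jacobian factor $\tilde{\sigma}_n[\phi]$, and in the density $\Lambda_2[\phi]$. This is handled exactly as in the proof of Proposition~\ref{prop Mrealanal}, but the present situation is genuinely simpler: because the target points lie in the interior set $\overline{\Omega_\mathtt{int}}$ rather than on $\partial\omega$, no singular integral has to be confronted and only the non-singular-kernel machinery is needed. Summing the two $C^\infty$ contributions then completes the proof.
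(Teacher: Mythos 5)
Your proposal follows essentially the same route as the paper's proof: both represent $u_\phi$ on $[0,T]\times\overline{\Omega_\mathtt{int}}$ via the two single layer potentials with densities $\Lambda_1[\phi]$ and $\Lambda_2[\phi]\circ(\phi^T)^{(-1)}$, pull the inner integral back to the fixed manifold $\partial\omega$ via Lemma \ref{lemma change of variable}, observe that condition \eqref{cond Omega int} makes all kernels non-singular, and conclude by the smoothness results for time-dependent integral operators with non-singular kernels of \cite[Lemmas A.2, A.3]{DaLu23} together with the smoothness of $\Lambda$ from Theorem \ref{Lambda Thm}. The only detail the paper adds is the preliminary reduction that one may assume $\Omega_\mathtt{int}$ to be of class $C^{1,\alpha}$, which licenses applying those mapping results with target space $C_{0}^{\frac{1+\alpha}{2};1+\alpha}([0,T]\times\overline{\Omega_\mathtt{int}})$; your argument should include the same remark.
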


\begin{proof}
    {Without loss of generality we can assume that $\Omega_\mathtt{int}$ is of class $C^{1,\alpha}$.} By Theorem \ref{u_phi thm}, definition \eqref{U}, and Lemma \ref{lemma change of variable}, for every $\phi \in Q_\mathtt{int}$ we have that
    \begin{equation}\label{eq:rep}
    \begin{split}
        u_{\phi}(t,x) &= u_{\Omega,\omega[\phi]}[\Lambda_1[\phi], \Lambda_2[\phi] \circ (\phi^T)^{(-1)}] (t,x)
        \\
       & =\int_{0}^{t} \int_{\partial \Omega} S_{n}(t-\tau, x-y) \Lambda_1[\phi](\tau, y) \,d\sigma_y d\tau 
        \\ 
      &  \qquad +
        \int_{0}^{t} \int_{\phi(\partial \omega)} S_{n}(t-\tau, x-\tilde{y}) \Lambda_2[\phi] \circ (\phi^T)^{(-1)} (t,\tilde{y})\,d\sigma_{\tilde{y}} d\tau
        \\
        &= \int_{0}^{t} \int_{\partial \Omega} S_{n}(t-\tau, x-y) \Lambda_1[\phi](\tau, y) \,d\sigma_y d\tau 
        \\
       & \qquad  +\int_{0}^{t} \int_{\partial \omega} S_{n}(t-\tau, x-\phi(y)) \Lambda_2[\phi](\tau, y) \tilde{\sigma}_n[\phi](y) \,d\sigma_y d\tau \, ,
    \end{split}
    \end{equation}
    for all $(t,x) \in [0,T] \times \overline{\Omega_\mathtt{int}}$. Moreover, by \eqref{cond Omega int} we notice that
    \begin{equation*}
        x-y \neq 0 \quad\forall (x,y) \in  \overline{\Omega_\mathtt{int}} \times \partial \Omega \quad \text{and} \quad x-\phi(y) \neq 0 \quad\forall (x,y) \in  \overline{\Omega_\mathtt{int}} \times \partial\omega.
    \end{equation*}
    {
 Then, by \cite[Lemmas  A.2, A.3]{DaLu23} on the regularity of time-dependent integral operators with non-singular kernels and of superposition operators, by Theorem \ref{Lambda Thm}, and by Lemma \ref{lemma change of variable}, we deduce that the integral operators in {the right hand side of} \eqref{eq:rep} define $C^\infty$ maps from $Q_\mathtt{int}$ to $C_{0}^{\frac{1+\alpha}{2}; 1+\alpha}([0,T] \times \overline{\Omega_\mathtt{int}})$ (see also the proof of \cite[Theorem~5.6]{DaLuMoMu24}). The validity of the statement follows.}
\qed \end{proof}

{

Since the family of solutions $\{u_\phi\}_{\phi \in Q_0}$ is built through the Implicit Function Theorem, in Proposition \ref{prop:locrep} below, we deduce the validity of a certain local uniqueness property. In particular, we will prove that   if $\tilde{\phi} \in Q_0$, $\tilde{u}$ is a solution of problem \eqref{princeqpertu} with $\phi$ replaced with $\tilde{\phi}$, and the trace of $\tilde{u}$ on $[0,T]\times \partial (\Omega \setminus \overline{\omega[\tilde{\phi}]})$ belongs to a certain set, then  $\tilde{u}=u_{\tilde{\phi}}$, i.e., $\tilde{u}$ must be the element of the family $\{u_\phi\}_{\phi \in Q_0}$ with $\phi=\tilde{\phi}$.

{\begin{proposition}\label{prop:locrep}
  Let $\alpha \in \mathopen]0,1[$ and $T>0$. Let $\Omega$, $\omega$ be as in \eqref{introsetconditions}. Let $G$, $f$ be as in \eqref{introfunconditions}. Let assumption  \eqref{condition NG*} hold. Let $\phi_0$, $u_0$ be as in Proposition \ref{prop u_0} and $Q_0$ be the neighborhood of $\phi_0$ of Theorem \ref{Lambda Thm}. Then there exists an open neighborhood $U_0$ of $(u_{0|[0,T] \times\partial\Omega},u_{0|[0,T] \times\partial\omega})$ in $C_0^{\frac{1+\alpha}{2}; 1+\alpha}([0,T] \times \partial\Omega)\times C_0^{\frac{1+\alpha}{2}; 1+\alpha}([0,T] \times \partial\omega)$ such that the following statement holds: 
  
 If $\tilde{\phi} \in Q_0$, $\tilde{u}$ is a solution of problem \eqref{princeqpertu} with $\phi$ replaced with $\tilde{\phi}$, and 
 \[
(\tilde{u}_{|[0,T] \times\partial\Omega},\tilde{u}\circ\tilde\phi^T)\in U_0\,,
 \] 
  then 
\[
  \tilde{u}=u_{\tilde{\phi}}\,,
\]  
where $\{u_\phi\}_{\phi \in Q_0}$ is as in Theorem  \ref{u_phi thm}.
 \end{proposition}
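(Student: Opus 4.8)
The plan is to recover from $\tilde u$ a pair of densities, to show that this pair is a zero of $\mathcal M$ lying in the neighborhood $H_0$ produced by the Implicit Function Theorem in Theorem~\ref{Lambda Thm}, and then to invoke the graph characterization of that theorem. Since $\tilde\phi\in Q_0\subseteq\mathcal A^\Omega_{\partial\omega}$, the set $\omega[\tilde\phi]$ satisfies assumption \eqref{assOmtildom}, so Lemma~\ref{lemma rappr} (with $\tilde\omega=\omega[\tilde\phi]$) yields a unique pair $(\tilde\mu,\tilde\eta)\in C_0^{\frac\alpha2;\alpha}([0,T]\times\partial\Omega)\times C_0^{\frac\alpha2;\alpha}([0,T]\times\partial\omega)$ with $\tilde u=u_{\Omega,\omega[\tilde\phi]}[\tilde\mu,\tilde\eta\circ(\tilde\phi^T)^{(-1)}]$, and Proposition~\ref{prop M=0} gives $\mathcal M[\tilde\phi,\tilde\mu,\tilde\eta]=(0,0)$. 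Consequently it suffices to prove that $(\tilde\mu,\tilde\eta)\in H_0$: once this is known, $(\tilde\phi,\tilde\mu,\tilde\eta)$ is a zero of $\mathcal M$ in $Q_0\times H_0$, hence lies on the graph of $\Lambda$, so $(\tilde\mu,\tilde\eta)=\Lambda[\tilde\phi]$ and therefore $\tilde u=u_{\tilde\phi}$.

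To relate $(\tilde\mu,\tilde\eta)$ to the trace hypothesis I would introduce the pulled-back trace map $\mathcal T=(\mathcal T_1,\mathcal T_2)$ given by
\begin{align*}
\mathcal T_1[\phi,\mu,\eta] &:= V_{\partial\Omega}[\mu]+\bigl(v^-_{\omega[\phi]}[\eta\circ(\phi^T)^{(-1)}]\bigr)_{|[0,T]\times\partial\Omega},\\
\mathcal T_2[\phi,\mu,\eta] &:= \bigl(v^+_{\Omega}[\mu]\bigr)_{|[0,T]\times\partial\omega[\phi]}\circ\phi^T+V_{\partial\omega[\phi]}[\eta\circ(\phi^T)^{(-1)}]\circ\phi^T,
\end{align*}
from $\mathcal A^\Omega_{\partial\omega}\times C_0^{\frac\alpha2;\alpha}([0,T]\times\partial\Omega)\times C_0^{\frac\alpha2;\alpha}([0,T]\times\partial\omega)$ to $R:=C_0^{\frac{1+\alpha}{2};1+\alpha}([0,T]\times\partial\Omega)\times C_0^{\frac{1+\alpha}{2};1+\alpha}([0,T]\times\partial\omega)$. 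By construction $\mathcal T[\tilde\phi,\tilde\mu,\tilde\eta]=(\tilde u_{|[0,T]\times\partial\Omega},\tilde u\circ\tilde\phi^T)$ and $\mathcal T[\phi_0,\mu_0,\eta_0]=(u_{0|[0,T]\times\partial\Omega},u_{0|[0,T]\times\partial\omega})=:r_0$. Arguing exactly as in the proof of Proposition~\ref{prop Mrealanal} (through the results of \cite{DaLu23} and Lemma~\ref{lemma change of variable}), $\mathcal T$ is of class $C^\infty$, and since it is linear in $(\mu,\eta)$ its partial differential $\partial_{(\mu,\eta)}\mathcal T[\phi_0,\mu_0,\eta_0]$ coincides with the linear operator $(\mu,\eta)\mapsto\mathcal T[\phi_0,\mu,\eta]$.

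The decisive step is that this linear operator is a homeomorphism, and this is the part I expect to be the main obstacle. Its diagonal part $(\mu,\eta)\mapsto(V_{\partial\Omega}[\mu],V_{\partial\omega}[\eta])$ is an isomorphism by Theorem~\ref{thmsl}~(iv), while the off-diagonal operators $\eta\mapsto(v^-_\omega[\eta])_{|[0,T]\times\partial\Omega}$ and $\mu\mapsto(v^+_\Omega[\mu])_{|[0,T]\times\partial\omega}$ have smooth integration kernels (as $\partial\Omega\cap\partial\omega=\emptyset$): by the mapping properties of non-singular integral operators of \cite{DaLu23} they take values in a space of strictly higher H\"older regularity, whence they are compact into $R$ by Ascoli--Arzel\`a (cf.~Proposition~\ref{Ascoli Arzela cons prop}). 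Thus $\mathcal T[\phi_0,\cdot,\cdot]$ is Fredholm of index zero; injectivity follows as in Theorem~\ref{thmsl}~(iv) and Proposition~\ref{prop J_beta}, because $\mathcal T[\phi_0,\mu,\eta]=(0,0)$ forces $u_{\Omega,\omega}[\mu,\eta]$ to have vanishing trace on $[0,T]\times\partial(\Omega\setminus\overline\omega)$ and zero initial datum, so it vanishes by uniqueness of the initial Dirichlet problem for the heat equation, and Lemma~\ref{lemma rappr} then gives $(\mu,\eta)=(0,0)$. The same reasoning shows that $\mathcal T[\phi,\cdot,\cdot]$ is a homeomorphism for every $\phi\in\mathcal A^\Omega_{\partial\omega}$. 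I would then apply the Implicit Function Theorem to $F(\phi,\mu,\eta,r):=\mathcal T[\phi,\mu,\eta]-r$ at $(\phi_0,\mu_0,\eta_0,r_0)$, obtaining a neighborhood $Q'\times U_0$ of $(\phi_0,r_0)$ and a $C^\infty$ map $g$ with $g(\phi_0,r_0)=(\mu_0,\eta_0)$, $\mathcal T[\phi,g(\phi,r)]=r$, and, after shrinking, $g(Q'\times U_0)\subseteq H_0$. For $\tilde\phi\in Q'$ with trace datum in $U_0$, the global injectivity of $\mathcal T[\tilde\phi,\cdot,\cdot]$ shows that $(\tilde\mu,\tilde\eta)$ is the unique solution of $\mathcal T[\tilde\phi,\cdot,\cdot]=\tilde r$, hence $(\tilde\mu,\tilde\eta)=g(\tilde\phi,\tilde r)\in H_0$, and the reduction of the first paragraph gives $\tilde u=u_{\tilde\phi}$; taking $Q_0\subseteq Q'$ (shrinking the neighborhood of Theorem~\ref{Lambda Thm}, which leaves the earlier statements untouched) yields the claim with this $U_0$.

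Finally, I note an alternative that bypasses the reconstruction of densities and directly exhibits the annihilation at the level of the linearized problem: setting $w:=\tilde u-u_{\tilde\phi}$, the mean-value identity $G(t,x,\tilde u)-G(t,x,u_{\tilde\phi})=\beta\,w$ with $\beta:=\int_0^1\mathcal N_{\partial_\xi G}\bigl(\tilde\phi,\,u_{\tilde\phi}+s\,w\bigr)\,ds$ (which lies in $C^{\frac\alpha2;\alpha}([0,T]\times\partial\omega[\tilde\phi])$ by assumption \eqref{condition NG*}) shows that $w$ solves problem \eqref{beta problem eq} with $\tilde\omega=\omega[\tilde\phi]$, so that $w=0$ by Lemma~\ref{beta prob lemma}; equivalently, the densities of $w$ are annihilated by the homeomorphism $\mathcal J_\beta$ of Proposition~\ref{prop J_beta}. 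This route makes transparent why the conclusion holds, the role of the set $U_0$ being to anchor the reconstructed densities in the Implicit Function Theorem neighborhood $H_0$.
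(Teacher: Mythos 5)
Your proposal is correct, and its two halves deserve separate comments. The main route (your first three paragraphs) has the same skeleton as the paper's proof: recover the densities $(\tilde\mu,\tilde\eta)$ of $\tilde u$, observe that $\mathcal{M}[\tilde\phi,\tilde\mu,\tilde\eta]=(0,0)$ by Proposition \ref{prop M=0}, show that $(\tilde\mu,\tilde\eta)\in H_0$, and conclude by the graph characterization of Theorem \ref{Lambda Thm}. The difference is in how membership in $H_0$ is extracted from the trace hypothesis. The paper simply defines $U_0$ as the image of $Q_0\times H_0$ under the trace map (your $\mathcal{T}$), reads off openness and the membership $(\tilde\mu,\tilde\eta)\in H_0$ from Theorem \ref{thmsl} (iv), and never shrinks $Q_0$; no Fredholm analysis and no second application of the Implicit Function Theorem appear. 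You instead prove that $\mathcal{T}[\phi,\cdot,\cdot]$ is a homeomorphism (Fredholm of index zero plus injectivity via uniqueness of the Dirichlet problem --- essentially the justification that the paper leaves implicit behind its citation of Theorem \ref{thmsl} (iv)) and then apply the Implicit Function Theorem to $\mathcal{T}[\phi,\mu,\eta]-r$. What your version buys is exactly the point the paper glosses over: membership of the trace pair in the paper's $U_0$ only produces a witness $(\phi',\mu',\eta')\in Q_0\times H_0$, and $\phi'$ need not coincide with $\tilde\phi$, so passing to $(\tilde\mu,\tilde\eta)\in H_0$ really does require the joint continuity of the density-reconstruction map $(\phi,r)\mapsto g(\phi,r)$ that you establish. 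The price is that you must shrink $Q_0$ to $Q'$, so strictly speaking you prove the statement for a possibly smaller neighborhood of $\phi_0$ than the one fixed in the proposition; this is harmless, since all statements involved are stable under shrinking $Q_0$, but it is a deviation worth flagging.

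Your final paragraph is a genuinely different argument, absent from the paper, and it is in fact stronger than the statement. Writing $w=\tilde u-u_{\tilde\phi}$ and factoring $G(t,x,\tilde u)-G(t,x,u_{\tilde\phi})=\beta\,w$ by the fundamental theorem of calculus is legitimate under \eqref{condition NG*}, which guarantees the pointwise existence and continuity of $\partial_\xi G$ and that $\mathcal{N}_{\partial_\xi G}(\phi,u)\in C^{\frac{\alpha}{2};\alpha}([0,T]\times\partial\omega)$, so that $\beta$ (pushed forward to $\partial\omega[\tilde\phi]$) lies in $C^{\frac{\alpha}{2};\alpha}([0,T]\times\partial\omega[\tilde\phi])$; then $w$ solves \eqref{beta problem eq} with $\tilde\omega=\omega[\tilde\phi]$ and Lemma \ref{beta prob lemma} gives $w=0$. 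Since that lemma imposes no sign condition on $\beta$, this argument yields uniqueness of the solution of \eqref{princeqpertu} for \emph{every} $\tilde\phi$, with no restriction on the traces whatsoever: one could take $U_0$ to be the whole space, the ``local'' in the local uniqueness becomes unnecessary, and the density bookkeeping of the first route (and of the paper's own proof) is superfluous for this particular statement. This observation is correct and is the cleanest way to see why the proposition holds.
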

 \begin{proof}
 Let
  {\small \begin{equation*}
  \begin{split}
      U_0:=\Bigg\{ \bigg((v^+_{\Omega} [\mu] + v^-_{\omega[\phi]}[\eta\circ (\phi^T)^{(-1)}])_{| [0,T] \times \partial \Omega}, v^+_{\Omega} [\mu]\circ \phi^T + v^-_{\omega[\phi]}[\eta \circ (\phi^T)^{(-1)}]\circ \phi^T  \bigg)\colon
      \\
       (\phi,\mu,\eta)\in Q_0\times H_0  \Bigg\}
  \end{split}
  \end{equation*}}
 with $H_0$ as in Theorem \ref{Lambda Thm}. By Theorem \ref{thmsl} (iv), $U_0$ is an open neighborhood of $(u_{0|[0,T] \times\partial\Omega},u_{0|[0,T] \times\partial\omega})$ in $C_0^{\frac{1+\alpha}{2}; 1+\alpha}([0,T] \times \partial\Omega)\times C_0^{\frac{1+\alpha}{2}; 1+\alpha}([0,T] \times \partial\omega)$. Moreover, Theorem \ref{thmsl} (iv) also implies that there exists a unique pair $(\tilde{\mu},\tilde{\eta}) \in C_0^{\frac{\alpha}{2};  \alpha}([0,T] \times \partial\Omega) \times C_0^{\frac{\alpha}{2};  \alpha}([0,T] \times \partial\omega)$ such that
{\small \begin{equation*}
     \begin{split}
         (&\tilde{u}_{|[0,T]\times \partial \Omega}, \,\tilde{u}\circ \tilde{\phi}^T)
         \\
         & = \bigg((v^+_{\Omega} [\tilde{\mu}] + v^-_{\omega[\tilde{\phi}]}[\tilde{\eta}\circ (\tilde{\phi}^T)^{(-1)}])_{| [0,T] \times \partial \Omega}, v^+_{\Omega} [\tilde{\mu}]\circ \tilde{\phi}^T + v^-_{\omega[\tilde{\phi}]}[\tilde{\eta} \circ (\tilde{\phi}^T)^{(-1)}]\circ \tilde{\phi}^T \bigg)\, .
     \end{split}
 \end{equation*}}
 Since $(\tilde{u}_{|[0,T]\times \partial \Omega}, \tilde{u}\circ \tilde{\phi}^T) \in U_0$, then $(\tilde{\mu},\tilde{\eta}) \in H_0$. Since $\tilde{u}$ is a solution of problem \eqref{princeqpertu} with $\phi$ replaced with $\tilde{\phi}$, then
 \[
 \mathcal{M}[\tilde{\phi},\tilde{\mu},\tilde{\eta}]=0\, .
 \]
  As a consequence, since $(\tilde{\phi},\tilde{\mu},\tilde{\eta}) \in Q_0\times H_0$, by Theorem \ref{Lambda Thm} we have
 \[
 (\tilde{\mu},\tilde{\eta})=(\Lambda_1[\tilde{\phi}],\Lambda_2[\tilde{\phi}])\, ,
 \]
 which implies that $\tilde{u}=u_{\tilde{\phi}}$.
 \qed \end{proof}
 
}}

{
\begin{remark}
We observe that under suitable assumptions on the family of functions $\{G_\phi\}_{\phi \in \mathcal{A}_{\partial \omega}^\Omega}$ we could consider {nonlinear} mixed  boundary value problems of the type
{\[
\begin{cases}
    \partial_t u - \Delta u = 0 & \quad\text{in } ]0,T] \times (\Omega \setminus \overline{\omega[\phi]}), 
    \\
    \frac{\partial}{\partial \nu_{\Omega}}  u(t,x) = f (t,x)& \quad \forall (t,x)\in [0,T] \times \partial \Omega, 
    \\
    \frac{\partial}{\partial {\nu_{\omega[\phi]}}}  u (t,x) = G_\phi(t,x,u(t,x)) & \quad \forall (t,x)\in  [0,T] \times \partial \omega[\phi],
    \\
    u(0,\cdot)=0 & \quad \text{in } \overline{\Omega} \setminus \omega[\phi].
    \end{cases}
\]
However, for the sake of clarity, we decided to keep a fixed function $G$.}
\end{remark}
}

{\section{Some remarks on {a linear problem}}\label{sec:lin}

The results that we have obtained for the study of the nonlinear problem clearly hold also for { some linear cases, where the analysis can be simplified} and where we can easily consider the dependence also on the Neumann datum. 

So let $\alpha \in \mathopen]0,1[$, $T>0$ and let $\Omega$, $\omega$ be as in \eqref{introsetconditions}. Then,  for all $(\phi,\gamma, f) \in \mathcal{A}^{\Omega}_{\partial\omega} \times C^{\frac{\alpha}{2}; \alpha}([0,T] \times \partial\omega) \times C_{0}^{\frac{\alpha}{2}; \alpha}([0,T] \times \partial\Omega)$, we  may consider {for example} the following linear mixed  boundary value problem for a function $u \in C_{0}^{\frac{1+\alpha}{2}; 1+\alpha}([0,T] \times (\overline{\Omega} \setminus \omega[\phi]))$:
{\begin{equation}\label{eq:lin}
\begin{cases}
    \partial_t u - \Delta u = 0 & \quad\text{in } ]0,T] \times (\Omega \setminus \overline{\omega[\phi]}), 
    \\
    \frac{\partial}{\partial \nu_{\Omega}}  u(t,x) = f(t,x) & \quad \forall (t,x)\in [0,T] \times \partial \Omega, 
    \\
    \frac{\partial}{\partial {\nu_{\omega[\phi]}}}  u(t,x)  +\gamma(t,\phi^{(-1)}(x))u(t,x)=0& \quad \forall (t,x)\in [0,T] \times \partial \omega[\phi],
    \\
    u(0,\cdot)=0 & \quad \text{in } \overline{\Omega} \setminus \omega[\phi]\, .
    \end{cases}
\end{equation}}
As is well known, for each $(\phi,\gamma, f) \in \mathcal{A}^{\Omega}_{\partial\omega} \times C^{\frac{\alpha}{2}; \alpha}([0,T] \times \partial\omega) \times C_{0}^{\frac{\alpha}{2}; \alpha}([0,T] \times \partial\Omega)$ problem \eqref{eq:lin} has a unique solution in $C_{0}^{\frac{1+\alpha}{2}; 1+\alpha}([0,T] \times (\overline{\Omega} \setminus \omega[\phi]))$, which we denote by $u[\phi,\gamma,f]$. { We note that we could handle certain Robin conditions of the type
\[
 \frac{\partial}{\partial {\nu_{\omega[\phi]}}}  u(t,x)  +\tilde{\gamma}(t,x)u(t,x)=0 \qquad \forall (t,x)\in [0,T] \times \partial \omega[\phi],
\]
instead of the one in problem \eqref{eq:lin}, where $\tilde{\gamma}$ is some suitable function defined on $[0,T] \times  \Omega$. Here, however, since in this case we also aim at studying the dependence on the Robin parameter, we decided to consider the Robin condition as it is  in \eqref{eq:lin}. For some comments on a different formulation of the Robin condition, see Remark \ref{rem:linbis}.
}

For each $(\phi,\gamma) \in \mathcal{A}^{\Omega}_{\partial\omega} \times C^{\frac{\alpha}{2}; \alpha}([0,T] \times \partial\omega)$, one might be interested in the Neumann-to-Dirichlet operator $\mathrm{NtD}_{(\phi,\gamma)}$, which maps the Neumann datum $f \in C_{0}^{\frac{\alpha}{2}; \alpha}([0,T] \times \partial\Omega)$ to the trace $u[\phi,\gamma,f]_{|[0,T]\times \partial \Omega} \in  C_{0}^{\frac{1+\alpha}{2}; 1+\alpha}([0,T] \times  \partial {\Omega})$ of the solution on $[0,T]\times \partial \Omega$. The map $\mathrm{NtD}_{(\phi,\gamma)}$ is a linear and continuous operator from $C_{0}^{\frac{\alpha}{2}; \alpha}([0,T] \times \partial\Omega)$ to $C_{0}^{\frac{1+\alpha}{2}; 1+\alpha}([0,T] \times \partial\Omega)$, i.e., $\mathrm{NtD}_{(\phi,\gamma)}\in \mathcal{L}(C_{0}^{\frac{\alpha}{2}; \alpha}([0,T] \times \partial\Omega), C_{0}^{\frac{1+\alpha}{2}; 1+\alpha}([0,T] \times \partial\Omega))$. Such an operator can be useful in applications. For instance, in Nakamura and Wang \cite{NaWa15,NaWa17}, the Neumann-to-Dirichlet operator is employed for the reconstruction of an unknown cavity with Robin boundary conditions inside a heat conductor. Here, in particular, we aim to understand the regularity of the map 
\[
\begin{split}
 (\phi,\gamma) \mapsto  \mathrm{NtD}_{(\phi,\gamma)}
\end{split}\]
from $\mathcal{A}^{\Omega}_{\partial\omega} \times C^{\frac{\alpha}{2}; \alpha}([0,T] \times \partial\omega)$ to $\mathcal{L}(C_{0}^{\frac{\alpha}{2}; \alpha}([0,T] \times \partial\Omega), C_{0}^{\frac{1+\alpha}{2}; 1+\alpha}([0,T] \times \partial\Omega))$. To achieve our objective, we can follow the approach used in the nonlinear case.

So, let $\alpha \in \mathopen]0,1[$, $T>0$ and let $\Omega$, $\omega$ be as in \eqref{introsetconditions}. Let $\mathcal{M}=(\mathcal{M}_1,\mathcal{M}_2)$ be the map from $\mathcal{A}^{\Omega}_{\partial\omega} \times C^{\frac{\alpha}{2}; \alpha}([0,T] \times \partial\omega) \times C_{0}^{\frac{\alpha}{2}; \alpha}([0,T] \times \partial\Omega) \times C_0^{\frac{\alpha}{2};  \alpha}([0,T] \times \partial\Omega) \times C_0^{\frac{\alpha}{2};  \alpha}([0,T] \times \partial\omega)$ to $ C_0^{\frac{\alpha}{2};  \alpha}([0,T] \times \partial\Omega) \times C_0^{\frac{\alpha}{2};  \alpha}([0,T] \times \partial\omega)$ defined by
\[
\begin{aligned}
\mathcal{M}_1[\phi,\gamma, f, \mu,\eta] & := \left(\frac{1}{2} I + W^\ast_{\partial\Omega} \right) [\mu] + \nu_{\Omega} \cdot \nabla  v^-_{\omega[\phi]}[\eta \circ (\phi^T)^{(-1)}] - f,
\\
\mathcal{M}_2[\phi,\gamma, f, \mu,\eta] & :=
\left(-\frac{1}{2} I + W^\ast_{\partial\omega[\phi]} \right) [\eta \circ (\phi^T)^{(-1)}] \circ \phi^T 
\\
& \quad
+ \left({\nu_{\omega[\phi]}}\circ \phi \right) \cdot \nabla  v^+_{\Omega}[\mu] \circ \phi^T
\\
& \quad
+ \gamma{(\cdot)} \Big(v^+_{\Omega}[\mu]_{|[0,T] \times \partial\omega[\phi]}\circ \phi^T + V_{\partial\omega[\phi]}[\eta\circ \phi^{(-1)}] \circ \phi^T \Big),
\end{aligned}
\]
for all $(\phi,\gamma, f, \mu, \eta) \in \mathcal{A}^{\Omega}_{\partial\omega} \times C^{\frac{\alpha}{2}; \alpha}([0,T] \times \partial\omega) \times C_{0}^{\frac{\alpha}{2}; \alpha}([0,T] \times \partial\Omega) \times C_0^{\frac{\alpha}{2};  \alpha}([0,T] \times \partial\Omega) \times C_0^{\frac{\alpha}{2};  \alpha}([0,T] \times \partial\omega)$.  From the definition of $\mathcal{M}$, we readily deduce the following:

\begin{proposition}\label{prop Mlin=0}
Let $\alpha \in \mathopen]0,1[$ and $T>0$. Let $\Omega$, $\omega$ be as in \eqref{introsetconditions}. Let
\begin{equation*}
(\phi,\gamma, f) \in \mathcal{A}^{\Omega}_{\partial\omega} \times C^{\frac{\alpha}{2}; \alpha}([0,T] \times \partial\omega) \times C_{0}^{\frac{\alpha}{2}; \alpha}([0,T] \times \partial\Omega) \, .
\end{equation*}
Then 
\[
u[\phi,\gamma,f]=(v^+_{\Omega} [\mu] + v^-_{\omega[\phi]}[\eta\circ (\phi^T)^{(-1)}])_{| [0,T] \times (\overline{\Omega} \setminus {\omega[\phi]})}\, ,
\] 
where $(\mu, \eta)$ is the unique solution in $C_0^{\frac{\alpha}{2};  \alpha}([0,T] \times \partial\Omega) \times C_0^{\frac{\alpha}{2};  \alpha}([0,T] \times \partial\omega)$ of equation
\begin{equation}\label{Mlin=0}
\mathcal{M}[\phi,\gamma, f, \mu, \eta] = (0,0).
\end{equation}
\end{proposition}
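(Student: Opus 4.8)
The plan is to mirror the strategy used for the nonlinear integral system in Proposition \ref{prop M=0}, supplemented by an explicit unique-solvability statement for the linear system \eqref{Mlin=0}. I would split the argument into an equivalence step, relating \eqref{Mlin=0} to the boundary value problem \eqref{eq:lin}, and an existence/uniqueness step for the density pair $(\mu,\eta)$.

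For the equivalence step, I fix $(\phi,\gamma,f)$ as in the statement and let $(\mu,\eta)$ be an arbitrary pair in the product Schauder space. Since $\phi\in\mathcal{A}^{\Omega}_{\partial\omega}$ is a $C^{1,\alpha}$ diffeomorphism, the pull-back $\eta\mapsto\eta\circ(\phi^T)^{(-1)}$ maps $C_0^{\frac{\alpha}{2};\alpha}([0,T]\times\partial\omega)$ isomorphically onto $C_0^{\frac{\alpha}{2};\alpha}([0,T]\times\partial\omega[\phi])$, so $(\mu,\eta\circ(\phi^T)^{(-1)})$ is an admissible pair of densities on $\partial\Omega$ and $\partial\omega[\phi]$. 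Because $\overline{\omega[\phi]}\subseteq\Omega$ by \eqref{A^Omega_omega}, the pair $(\Omega,\omega[\phi])$ satisfies \eqref{assOmtildom}, so Lemma \ref{lemma rappr} (with $\tilde\omega=\omega[\phi]$) guarantees that $u_{\Omega,\omega[\phi]}[\mu,\eta\circ(\phi^T)^{(-1)}]$ lies in $C_0^{\frac{1+\alpha}{2};1+\alpha}([0,T]\times(\overline\Omega\setminus\omega[\phi]))$, solves the heat equation, and vanishes at $t=0$. It then remains to translate the two boundary conditions of \eqref{eq:lin} into $\mathcal{M}_1=0$ and $\mathcal{M}_2=0$. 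The Neumann condition on $\partial\Omega$ follows at once from the jump relations of Theorem \ref{thmsl} (iii), the contribution of $v^-_{\omega[\phi]}$ being smooth up to $\partial\Omega$. For the Robin condition on $\partial\omega[\phi]$ I would again use Theorem \ref{thmsl} (iii) to write $\frac{\partial}{\partial\nu_{\omega[\phi]}}u$ as $(-\frac{1}{2}I+W^*_{\partial\omega[\phi]})[\eta\circ(\phi^T)^{(-1)}]+\nu_{\omega[\phi]}\cdot\nabla v^+_\Omega[\mu]$, add the zeroth order term $\gamma(t,\phi^{(-1)}(x))\,u$ with $u=v^+_\Omega[\mu]+V_{\partial\omega[\phi]}[\eta\circ(\phi^T)^{(-1)}]$ on $\partial\omega[\phi]$, and finally compose with $\phi^T$; the bookkeeping point is that composing with $\phi^T$ turns $\gamma(t,\phi^{(-1)}(x))$ into $\gamma(t,x)$, reproducing exactly the coefficient in $\mathcal{M}_2$. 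This shows that $(\mu,\eta)$ solves \eqref{Mlin=0} if and only if $u_{\Omega,\omega[\phi]}[\mu,\eta\circ(\phi^T)^{(-1)}]$ solves \eqref{eq:lin}.

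For existence and uniqueness of $(\mu,\eta)$, and thus the asserted formula, I proceed as follows. Existence: problem \eqref{eq:lin} admits the solution $u[\phi,\gamma,f]$, which lies in the target space of the bijection of Lemma \ref{lemma rappr}; that lemma yields the (unique) representing densities on $\partial\Omega$ and $\partial\omega[\phi]$, and transporting the inner density back through $\phi^T$ produces a pair $(\mu,\eta)$ that, by the equivalence step, solves \eqref{Mlin=0} and satisfies the stated representation. Uniqueness: if $(\mu,\eta)$ is any solution of \eqref{Mlin=0}, then by the equivalence step $u_{\Omega,\omega[\phi]}[\mu,\eta\circ(\phi^T)^{(-1)}]$ solves \eqref{eq:lin}, hence equals $u[\phi,\gamma,f]$ by the well-known uniqueness for \eqref{eq:lin}; the injectivity in Lemma \ref{lemma rappr}, together with invertibility of the pull-back, forces $(\mu,\eta)$ to coincide with the pair already constructed. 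Alternatively, unique solvability of \eqref{Mlin=0} can be read off directly: the map $(\mu,\eta)\mapsto(\mathcal{M}_1+f,\mathcal{M}_2)$ is, after conjugation by the pull-back isomorphisms, precisely the operator $\mathcal{J}_\beta$ of Proposition \ref{prop J_beta} relative to $\tilde\omega=\omega[\phi]$ and $\beta=-\gamma\circ(\phi^T)^{(-1)}\in C^{\frac{\alpha}{2};\alpha}([0,T]\times\partial\omega[\phi])$, hence a linear homeomorphism, so that \eqref{Mlin=0} has exactly one solution for the datum $(f,0)$.

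The only delicate point I anticipate is the careful handling of the pull-back in the last two steps: one must track where each density lives (on $\partial\omega$ versus on $\partial\omega[\phi]$) and verify that composing the Robin identity with $\phi^T$ yields precisely the coefficient $\gamma$ rather than $\gamma\circ\phi^{\pm1}$, and likewise that the correct sign $\beta=-\gamma\circ(\phi^T)^{(-1)}$ appears when invoking Proposition \ref{prop J_beta}. Everything else—the regularity transfer under the $C^{1,\alpha}$ diffeomorphism $\phi$, the smoothness of $v^+_\Omega[\mu]$ near the inner boundary, and the applicability of Lemma \ref{lemma rappr}—is routine and parallels Proposition \ref{prop M=0}.
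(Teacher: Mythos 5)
Your proposal is correct and follows exactly the route the paper intends: the paper states this proposition as ``readily deduced'' from the definition of $\mathcal{M}$, namely by repeating the argument of Proposition \ref{prop M=0} (jump relations of Theorem \ref{thmsl} (iii), change of variables via $\phi^T$, and Lemma \ref{lemma rappr} with $\tilde\omega=\omega[\phi]$), with unique solvability of \eqref{Mlin=0} coming from the uniqueness for \eqref{eq:lin} together with the bijectivity in Lemma \ref{lemma rappr}, or equivalently from Proposition \ref{prop J_beta} with $\beta=-\gamma\circ(\phi^T)^{(-1)}$. Your bookkeeping of the pull-backs and of the sign of $\beta$ is accurate, so the write-up faithfully supplies the details the paper omits.
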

Since the solutions of equation \eqref{Mlin=0} play a relevant role, for each $(\phi,\gamma, f) \in \mathcal{A}^{\Omega}_{\partial\omega} \times C^{\frac{\alpha}{2}; \alpha}([0,T] \times \partial\omega) \times C_{0}^{\frac{\alpha}{2}; \alpha}([0,T] \times \partial\Omega)$, we denote by
\begin{equation*}
(\mu[\phi,\gamma, f], \eta[\phi,\gamma, f])    
\end{equation*}
the unique solution in $C_0^{\frac{\alpha}{2};  \alpha}([0,T] \times \partial\Omega) \times C_0^{\frac{\alpha}{2};  \alpha}([0,T] \times \partial\omega)$ of equation \eqref{Mlin=0}. In view of Proposition \ref{prop Mlin=0}, to study the regularity of 
\[
(\phi,\gamma,f) \mapsto u[\phi,\gamma,f]\, ,
\]
 we can start by understanding the regularity of 
 \[
 (\phi,\gamma,f)\mapsto (\mu[\phi,\gamma, f], \eta[\phi,\gamma, f])\, .
 \] 
 We do so in the following theorem, which can be proved by simplifying the arguments employed in Proposition \ref{d M prop} and Theorem \ref{Lambda Thm}. 

\begin{theorem}\label{LambdaThmlin}
  Let $\alpha \in \mathopen]0,1[$ and $T>0$. Let $\Omega$, $\omega$ be as in \eqref{introsetconditions}. Then the map $(\mu[\cdot,\cdot, \cdot], \eta[\cdot,\cdot, \cdot])$
  from $\mathcal{A}^{\Omega}_{\partial\omega} \times C^{\frac{\alpha}{2}; \alpha}([0,T] \times \partial\omega) \times C_{0}^{\frac{\alpha}{2}; \alpha}([0,T] \times \partial\Omega)$ to $C_0^{\frac{\alpha}{2};  \alpha}([0,T] \times \partial\Omega) \times C_0^{\frac{\alpha}{2};  \alpha}([0,T] \times \partial\omega)$
  which takes the triple $(\phi,\gamma, f)$ to {the} unique solution  $(\mu[\phi,\gamma, f], \eta[\phi,\gamma, f])$ of equation \eqref{Mlin=0} in  $C_0^{\frac{\alpha}{2};  \alpha}([0,T] \times \partial\Omega) \times C_0^{\frac{\alpha}{2};  \alpha}([0,T] \times \partial\omega)$ is of class $C^\infty$.
\end{theorem}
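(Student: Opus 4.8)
The plan is to follow the same Implicit Function Theorem scheme employed in the nonlinear case (cf.~Propositions \ref{prop Mrealanal} and \ref{d M prop} and Theorem \ref{Lambda Thm}), exploiting the fact that here $\mathcal{M}$ is affine in the densities $(\mu,\eta)$. Writing
\[
\mathcal{X}:=C_0^{\frac{\alpha}{2};\alpha}([0,T]\times\partial\Omega)\times C_0^{\frac{\alpha}{2};\alpha}([0,T]\times\partial\omega),
\]
for each $(\phi,\gamma)$ I would isolate the part $L[\phi,\gamma]\in\mathcal{L}(\mathcal{X},\mathcal{X})$ of $\mathcal{M}$ that is linear in $(\mu,\eta)$, so that $\mathcal{M}[\phi,\gamma,f,\mu,\eta]=L[\phi,\gamma][\mu,\eta]-(f,0)$. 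Solving \eqref{Mlin=0} then amounts to inverting $L[\phi,\gamma]$, and the whole statement reduces to the smooth dependence of $L[\phi,\gamma]^{-1}$ on the parameters, applied to the datum $(f,0)$.

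First I would show that $\mathcal{M}$ is of class $C^\infty$ on the whole product domain. For the layer-potential terms this repeats the argument of Proposition \ref{prop Mrealanal} almost verbatim, invoking the smooth dependence of the heat layer potentials on the support from \cite[Theorem~5.4]{DaLu23} together with \cite[Lemmas~A.2, A.3]{DaLu23} and Lemma \ref{lemma change of variable}. It is in fact simpler, because the superposition operator $\mathcal{N}_G$ is now replaced by the bounded bilinear (hence $C^\infty$) multiplication map $(\gamma,w)\mapsto\gamma w$, so assumption \eqref{condition NG*} is not needed; the Neumann datum $f$ enters linearly and poses no difficulty.

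Next, mimicking Proposition \ref{d M prop}, I would note that, since $\mathcal{M}$ is affine in $(\mu,\eta)$, its partial differential with respect to $(\mu,\eta)$ is exactly $L[\phi,\gamma]$, independently of the base point. The key point is that $L[\phi,\gamma]$ is a homeomorphism of $\mathcal{X}$ onto itself for every $(\phi,\gamma)$. I would obtain this from Proposition \ref{prop J_beta} applied with $\tilde\omega=\omega[\phi]$, which satisfies \eqref{assOmtildom} because $\phi\in\mathcal{A}^\Omega_{\partial\omega}$ (cf.~\eqref{A^Omega_omega}), and with $\beta:=-\gamma\circ(\phi^T)^{(-1)}\in C^{\frac{\alpha}{2};\alpha}([0,T]\times\partial\omega[\phi])$. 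Comparing the definition of $\mathcal{M}$ with \eqref{J_beta eq}, one checks the factorization $L[\phi,\gamma]=P_\phi\circ\mathcal{J}_\beta\circ R_\phi$, where $R_\phi[\mu,\eta]:=(\mu,\eta\circ(\phi^T)^{(-1)})$ and $P_\phi(\psi_1,\psi_2):=(\psi_1,\psi_2\circ\phi^T)$ are the pull-back isomorphisms attached to the $C^{1,\alpha}$ diffeomorphism $\phi$. As $R_\phi$ and $P_\phi$ are linear homeomorphisms and $\mathcal{J}_\beta$ is a homeomorphism by Proposition \ref{prop J_beta}, so is $L[\phi,\gamma]$.

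Finally I would conclude in either of two equivalent ways. Applying the Implicit Function Theorem (cf.~Deimling \cite[Theorem~15.1 \& Corollary~15.1]{De85}) at an arbitrary base point, the two previous steps yield a local $C^\infty$ solution map which, by the uniqueness in Proposition \ref{prop Mlin=0}, must coincide with $(\phi,\gamma,f)\mapsto(\mu[\phi,\gamma,f],\eta[\phi,\gamma,f])$; since smoothness is local and the base point is arbitrary, the global map is $C^\infty$. The cleaner shortcut, available precisely because the equation is linear, is to write $(\mu[\phi,\gamma,f],\eta[\phi,\gamma,f])=L[\phi,\gamma]^{-1}(f,0)$ and observe that $(\phi,\gamma)\mapsto L[\phi,\gamma]$ is $C^\infty$ into $\mathcal{L}(\mathcal{X},\mathcal{X})$ by the first step, that operator inversion is $C^\infty$ on the open set of invertible operators, and that evaluation of an operator on a vector is bilinear and continuous; composing these with the linear map $f\mapsto(f,0)$ gives the claim. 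I do not expect any single hard estimate to be the main obstacle, since the heavy analytic input on the smoothness of heat layer potentials is inherited from \cite{DaLu23} and from the already-established Proposition \ref{prop Mrealanal}; the delicate points are rather of bookkeeping nature, namely establishing the factorization $L[\phi,\gamma]=P_\phi\circ\mathcal{J}_\beta\circ R_\phi$ so that invertibility holds at \emph{every} $(\phi,\gamma)$ rather than only at a reference configuration, and keeping careful track of the pull-backs by $\phi$ and $\phi^T$ and of the sign of $\beta$.
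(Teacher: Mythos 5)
Your proposal is correct and follows essentially the same route as the paper: the paper's proof of Theorem \ref{LambdaThmlin} is precisely the one-line instruction to simplify the arguments of Proposition \ref{d M prop} and Theorem \ref{Lambda Thm}, i.e., smoothness of the (now affine-in-$(\mu,\eta)$) operator $\mathcal{M}$, invertibility of its partial differential via Proposition \ref{prop J_beta} applied with $\tilde\omega=\omega[\phi]$ and $\beta=-\gamma\circ(\phi^T)^{(-1)}$, and the Implicit Function Theorem, which is exactly what you carry out, with the correct sign for $\beta$ and the correct pull-back bookkeeping. Your closing observation that linearity allows bypassing the IFT altogether by writing $(\mu,\eta)=L[\phi,\gamma]^{-1}(f,0)$ and invoking smoothness of operator inversion is a legitimate and cleaner shortcut, but it is a refinement of, not a departure from, the paper's scheme.
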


Then, by the representation formula of Proposition \ref{prop Mlin=0} and the smoothness result of Theorem \ref{LambdaThmlin}, we can prove that the trace of $u[\phi,\gamma,f]$ on $[0,T]\times \partial \Omega$ depends smoothly upon the triple $(\phi,\gamma,f)$.

\begin{theorem}\label{thm:smoothreplin}
  Let $\alpha \in \mathopen]0,1[$ and $T>0$. Let $\Omega$, $\omega$ be as in \eqref{introsetconditions}. Then the map from $\mathcal{A}^{\Omega}_{\partial\omega} \times C^{\frac{\alpha}{2}; \alpha}([0,T] \times \partial\omega) \times C_{0}^{\frac{\alpha}{2}; \alpha}([0,T] \times \partial\Omega)$ to $C_{0}^{\frac{1+\alpha}{2}; 1+\alpha}([0,T] \times  \partial {\Omega})$ that takes $(\phi,\gamma,f)$ to $u[\phi,\gamma,f]_{|[0,T]\times \partial \Omega}$ is of class $C^\infty$.
\end{theorem}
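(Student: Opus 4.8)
The plan is to start from the representation formula of Proposition~\ref{prop Mlin=0} and take the trace on $[0,T]\times\partial\Omega$, so that the statement reduces to the smoothness of two boundary-potential maps composed with the already-established smooth dependence of the densities. First I would observe that, since by definition $V_{\partial\Omega}[\mu]=v_\Omega[\mu]_{|[0,T]\times\partial\Omega}$, restricting the formula of Proposition~\ref{prop Mlin=0} to $[0,T]\times\partial\Omega$ yields
\[
u[\phi,\gamma,f]_{|[0,T]\times\partial\Omega}=V_{\partial\Omega}\big[\mu[\phi,\gamma,f]\big]+v^-_{\omega[\phi]}\big[\eta[\phi,\gamma,f]\circ(\phi^T)^{(-1)}\big]_{|[0,T]\times\partial\Omega}\,.
\]
By Theorem~\ref{LambdaThmlin} the map $(\phi,\gamma,f)\mapsto(\mu[\phi,\gamma,f],\eta[\phi,\gamma,f])$ is of class $C^\infty$ from $\mathcal{A}^{\Omega}_{\partial\omega}\times C^{\frac{\alpha}{2};\alpha}([0,T]\times\partial\omega)\times C_{0}^{\frac{\alpha}{2};\alpha}([0,T]\times\partial\Omega)$ to $C_0^{\frac{\alpha}{2};\alpha}([0,T]\times\partial\Omega)\times C_0^{\frac{\alpha}{2};\alpha}([0,T]\times\partial\omega)$. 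Hence, since a composition of $C^\infty$ maps is $C^\infty$, it suffices to prove that the two summands above define $C^\infty$ maps of $(\mu,\eta)$ and $(\phi,\eta)$ respectively, with values in $C_{0}^{\frac{1+\alpha}{2};1+\alpha}([0,T]\times\partial\Omega)$.

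The first summand is immediate: the operator $\mu\mapsto V_{\partial\Omega}[\mu]$ is linear and continuous from $C_0^{\frac{\alpha}{2};\alpha}([0,T]\times\partial\Omega)$ to $C_0^{\frac{1+\alpha}{2};1+\alpha}([0,T]\times\partial\Omega)$ by Theorem~\ref{thmsl}~(iv), and a linear continuous operator is trivially of class $C^\infty$. The substantive part is the second summand. Using the change of variables of Lemma~\ref{lemma change of variable}~(i), for $(t,x)\in[0,T]\times\partial\Omega$ I would rewrite
\[
v^-_{\omega[\phi]}\big[\eta\circ(\phi^T)^{(-1)}\big](t,x)=\int_{0}^{t}\int_{\partial\omega}S_{n}(t-\tau,x-\phi(y))\,\eta(\tau,y)\,\tilde{\sigma}_n[\phi](y)\,d\sigma_y\,d\tau\,.
\]
Because $\overline{\omega[\phi]}\subseteq\Omega$ for every $\phi\in\mathcal{A}^{\Omega}_{\partial\omega}$ (cf.~\eqref{A^Omega_omega}), the sets $\partial\Omega$ and $\phi(\partial\omega)$ are disjoint, so $x-\phi(y)\neq 0$ for all $(x,y)\in\partial\Omega\times\partial\omega$ and the integral kernel is non-singular. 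Then, by the real analyticity of $\phi\mapsto\tilde{\sigma}_n[\phi]$ in Lemma~\ref{lemma change of variable}~(i) and by the results of \cite[Lemmas~A.2, A.3]{DaLu23} on non-autonomous composition operators and on time-dependent integral operators with non-singular kernels, the map $(\phi,\eta)\mapsto v^-_{\omega[\phi]}[\eta\circ(\phi^T)^{(-1)}]_{|[0,T]\times\partial\Omega}$ is of class $C^\infty$ from $\mathcal{A}^{\Omega}_{\partial\omega}\times C_0^{\frac{\alpha}{2};\alpha}([0,T]\times\partial\omega)$ to $C_{0}^{\frac{1+\alpha}{2};1+\alpha}([0,T]\times\partial\Omega)$, exactly as in the proof of Theorem~\ref{thm:smoothrep} (see also \cite[Theorem~5.6]{DaLuMoMu24}).

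The hard part, or rather the only point requiring care, is verifying that the non-singular heat integral operator indeed maps into the higher-regularity target space $C_{0}^{\frac{1+\alpha}{2};1+\alpha}([0,T]\times\partial\Omega)$ and depends smoothly on the shape parameter $\phi$ entering the kernel through $\phi(y)$. This is where the geometric separation $\overline{\omega[\phi]}\subseteq\Omega$ is essential: it keeps $\partial\Omega$ uniformly away from $\phi(\partial\omega)$, so that one may differentiate under the integral sign and exploit the $C^\infty$ regularity of $S_n$ on $\mathbb{R}^{1+n}\setminus\{(0,0)\}$, together with the smoothing effect of the non-singular kernel, to land in $C_{0}^{\frac{1+\alpha}{2};1+\alpha}$; the cited lemmas of \cite{DaLu23} package precisely this. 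Once both summands are shown to be $C^\infty$, composing with the $C^\infty$ density map of Theorem~\ref{LambdaThmlin} and adding the two smooth contributions completes the argument.
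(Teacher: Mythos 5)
Your proposal is correct and follows essentially the same route as the paper's proof: both start from the representation formula of Proposition~\ref{prop Mlin=0}, treat the singular summand on $[0,T]\times\partial\Omega$ via the mapping properties of $V_{\partial\Omega}$ from Theorem~\ref{thmsl}, rewrite the hole contribution as a non-singular integral over $\partial\omega$ through Lemma~\ref{lemma change of variable} so that \cite[Lemmas~A.2, A.3]{DaLu23} apply, and then compose with the $C^\infty$ density map of Theorem~\ref{LambdaThmlin}. Your write-up merely makes explicit the two-summand decomposition and the role of the geometric separation $\overline{\omega[\phi]}\subseteq\Omega$, which the paper's proof invokes in a single stroke.
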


\begin{proof}
By Proposition \ref{prop Mlin=0} and Theorem \ref{LambdaThmlin}, for every $(\phi,\gamma,f) \in \mathcal{A}^{\Omega}_{\partial\omega} \times C^{\frac{\alpha}{2}; \alpha}([0,T] \times \partial\omega) \times C_{0}^{\frac{\alpha}{2}; \alpha}([0,T] \times \partial\Omega)$ we have that
    \begin{equation}\label{eq:replin}
    \begin{split}
        u[\phi,\gamma,f&]_{|[0,T]\times \partial \Omega}(t,x)
        = \int_{0}^{t} \int_{\partial \Omega} S_{n}(t-\tau, x-y) \mu[\phi,\gamma,f](\tau, y) \,d\sigma_y d\tau 
        \\
        &+\int_{0}^{t} \int_{\partial \omega} S_{n}(t-\tau, x-\phi(y)) \eta[\phi,\gamma,f](\tau, y) \tilde{\sigma}_n[\phi](y) \,d\sigma_y d\tau \, ,
    \end{split}
    \end{equation}
    for all $(t,x) \in [0,T] \times \partial \Omega$. Then, by \cite[Lemmas  A.2, A.3]{DaLu23} on the regularity of time-dependent integral operators with non-singular kernels and of superposition operators, by Theorems \ref{thmsl} and \ref{LambdaThmlin}, and by Lemma \ref{lemma change of variable}, we deduce that the integral operators in the right hand side of \eqref{eq:replin} define $C^\infty$ maps from $\mathcal{A}^{\Omega}_{\partial\omega} \times C^{\frac{\alpha}{2}; \alpha}([0,T] \times \partial\omega) \times C_{0}^{\frac{\alpha}{2}; \alpha}([0,T] \times \partial\Omega)$ to $C_{0}^{\frac{1+\alpha}{2}; 1+\alpha}([0,T] \times  \partial {\Omega})$. The validity of the statement follows.
\qed \end{proof}

Then, by Theorem \ref{thm:smoothreplin}, we deduce that the function $(\phi,\gamma) \mapsto \mathrm{NtD}_{(\phi,\gamma)}$ is of class $C^\infty$.

\begin{corollary}\label{cor:NtD}
  Let $\alpha \in \mathopen]0,1[$ and $T>0$. Let $\Omega$, $\omega$ be as in \eqref{introsetconditions}. Then the map from $\mathcal{A}^{\Omega}_{\partial\omega} \times C^{\frac{\alpha}{2}; \alpha}([0,T] \times \partial\omega)$ to $\mathcal{L}(C_{0}^{\frac{\alpha}{2}; \alpha}([0,T] \times \partial\Omega),C_{0}^{\frac{1+\alpha}{2}; 1+\alpha}([0,T] \times  \partial {\Omega}))$ that takes $(\phi,\gamma)$ to $\mathrm{NtD}_{(\phi,\gamma)}$ is of class $C^\infty$.
\end{corollary}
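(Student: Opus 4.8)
The plan is to obtain the statement as a short consequence of the smoothness result of Theorem \ref{thm:smoothreplin}, combined with the linearity of problem \eqref{eq:lin} in the Neumann datum $f$. First I would record the two elementary facts that make the Neumann-to-Dirichlet operator well defined and linear. By the definition of $\mathrm{NtD}_{(\phi,\gamma)}$ and by the uniqueness of the solution $u[\phi,\gamma,f]$, we have
\[
\mathrm{NtD}_{(\phi,\gamma)}(f) = u[\phi,\gamma,f]_{|[0,T]\times \partial \Omega} \qquad \forall f \in C_{0}^{\frac{\alpha}{2}; \alpha}([0,T] \times \partial\Omega).
\]
Since all the conditions in \eqref{eq:lin} are homogeneous except for the Neumann datum, which enters linearly, the uniqueness of the solution implies that $f \mapsto u[\phi,\gamma,f]$ is linear; its continuity, and hence the membership $\mathrm{NtD}_{(\phi,\gamma)} \in \mathcal{L}(C_{0}^{\frac{\alpha}{2}; \alpha}([0,T] \times \partial\Omega),C_{0}^{\frac{1+\alpha}{2}; 1+\alpha}([0,T] \times \partial {\Omega}))$, has already been observed in the text preceding the statement.

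Next I would introduce the $C^\infty$ map of Theorem \ref{thm:smoothreplin},
\[
F(\phi,\gamma,f) := u[\phi,\gamma,f]_{|[0,T]\times \partial \Omega},
\]
from $\mathcal{A}^{\Omega}_{\partial\omega} \times C^{\frac{\alpha}{2}; \alpha}([0,T] \times \partial\omega) \times C_{0}^{\frac{\alpha}{2}; \alpha}([0,T] \times \partial\Omega)$ to $C_{0}^{\frac{1+\alpha}{2}; 1+\alpha}([0,T] \times \partial {\Omega})$, and exploit that $F$ is linear in its third argument. A linear map coincides with its own differential, so the partial differential of $F$ with respect to $f$ does not depend on $f$ and equals precisely $\mathrm{NtD}_{(\phi,\gamma)}$; that is,
\[
\partial_f F(\phi,\gamma,f) = \mathrm{NtD}_{(\phi,\gamma)}
\]
in $\mathcal{L}(C_{0}^{\frac{\alpha}{2}; \alpha}([0,T] \times \partial\Omega),C_{0}^{\frac{1+\alpha}{2}; 1+\alpha}([0,T] \times \partial {\Omega}))$, for every admissible triple $(\phi,\gamma,f)$.

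Finally I would invoke the standard principle of calculus in Banach spaces that the Fréchet differential of a map of class $C^\infty$ is again of class $C^\infty$ as an operator-valued map: concretely, the full differential $DF$ is $C^\infty$ into the space of bounded linear operators from the whole product domain to the target, and $\partial_f F$ is obtained by composing $DF$ with the continuous linear restriction onto the $f$-component, hence is itself $C^\infty$ into $\mathcal{L}(\cdot,\cdot)$. Composing $\partial_f F$ with the smooth affine inclusion $(\phi,\gamma) \mapsto (\phi,\gamma,0)$ then yields that
\[
(\phi,\gamma) \mapsto \mathrm{NtD}_{(\phi,\gamma)} = \partial_f F(\phi,\gamma,0)
\]
is of class $C^\infty$, which is the assertion. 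The one point deserving care is exactly this last step: one must verify that $\partial_f F$ is smooth for the \emph{operator-norm} topology on the target, and not merely in the weaker pointwise sense of $f \mapsto F(\phi,\gamma,f)$ varying smoothly for each fixed argument. This is guaranteed by the very definition of Fréchet differentiability, since the Fréchet differential of a Fréchet-$C^\infty$ map is, by construction, continuous and smooth as a map into the space of bounded linear operators equipped with the operator norm; no additional uniformity argument is required beyond what Theorem \ref{thm:smoothreplin} already provides.
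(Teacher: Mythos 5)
Your proposal is correct and takes essentially the same route as the paper: both arguments combine the smoothness of $(\phi,\gamma,f)\mapsto u[\phi,\gamma,f]_{|[0,T]\times \partial \Omega}$ from Theorem \ref{thm:smoothreplin} with linearity in $f$, identifying $\mathrm{NtD}_{(\phi,\gamma)}$ with the partial differential $\partial_f$ evaluated at a fixed datum (the paper at an arbitrary fixed $f_0$, you at $f=0$), and then using that the differential of a $C^\infty$ map is itself $C^\infty$ as an operator-valued map. The only difference is that you spell out explicitly the operator-norm smoothness of $\partial_f F$ and the composition with the inclusion $(\phi,\gamma)\mapsto(\phi,\gamma,0)$, which the paper leaves implicit.
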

\begin{proof}
By Theorem \ref{thm:smoothreplin}, if $f_0 \in C_{0}^{\frac{\alpha}{2}; \alpha}([0,T] \times \partial\Omega)$ is fixed, then the map which takes the pair $(\phi,\gamma)$  to the partial differential 
\begin{equation*}
    \partial_{f} u[\phi,\gamma,f_0]_{|[0,T]\times \partial \Omega} \in \mathcal{L}(C_{0}^{\frac{\alpha}{2}; \alpha}([0,T] \times \partial\Omega),C_{0}^{\frac{1+\alpha}{2}; 1+\alpha}([0,T] \times  \partial {\Omega}))
\end{equation*}
is of class $C^\infty$. Since $u[\phi,\gamma,f]$ is linear in $f$, we deduce that
\[
\partial_{f} u[\phi,\gamma,f_0]_{|[0,T]\times \partial \Omega}=u[\phi,\gamma,\cdot]_{|[0,T]\times \partial \Omega}\, ,
\]
and thus the validity of the statement follows.
\qed \end{proof}
}

{
\begin{remark}\label{rem:linbis}
Let $\alpha \in \mathopen]0,1[$, $T>0$ and let $\Omega$, $\omega$ be as in \eqref{introsetconditions}. Let $\tilde{\gamma}\in C^\infty(\Omega)$. For all $(\phi, f) \in \mathcal{A}^{\Omega}_{\partial\omega} \times C_{0}^{\frac{\alpha}{2}; \alpha}([0,T] \times \partial\Omega)$, we  consider the following linear mixed  boundary value problem for a function $u \in C_{0}^{\frac{1+\alpha}{2}; 1+\alpha}([0,T] \times (\overline{\Omega} \setminus \omega[\phi]))$:
{\begin{equation}\label{eq:linbis}
\begin{cases}
    \partial_t u - \Delta u = 0 & \quad\text{in } ]0,T] \times (\Omega \setminus \overline{\omega[\phi]}), 
    \\
    \frac{\partial}{\partial \nu_{\Omega}}  u(t,x) = f(t,x) & \quad \forall (t,x)\in [0,T] \times \partial \Omega, 
    \\
    \frac{\partial}{\partial {\nu_{\omega[\phi]}}}  u(t,x)  +\tilde{\gamma}(x)u(t,x)=0& \quad \forall (t,x)\in [0,T] \times \partial \omega[\phi],
    \\
    u(0,\cdot)=0 & \quad \text{in } \overline{\Omega} \setminus \omega[\phi]\, .
    \end{cases}
\end{equation}}
Clearly, for each $(\phi, f) \in \mathcal{A}^{\Omega}_{\partial\omega}\times C_{0}^{\frac{\alpha}{2}; \alpha}([0,T] \times \partial\Omega)$ problem \eqref{eq:linbis} has a unique solution in $C_{0}^{\frac{1+\alpha}{2}; 1+\alpha}([0,T] \times (\overline{\Omega} \setminus \omega[\phi]))$, which we denote by $u[\phi,f]$. We note that we can rewrite the Robin condition in \eqref{eq:linbis} as 
\[
    \frac{\partial}{\partial {\nu_{\omega[\phi]}}}  u(t,x)  +\tilde{\gamma} \circ \phi (\phi^{-1}(x))u(t,x)=0 \qquad \forall (t,x)\in [0,T] \times \partial \omega[\phi],
\]
for all $\phi \in \mathcal{A}_{\partial \omega}^{\Omega}$. Since the map from $\mathcal{A}_{\partial \omega}^{\Omega}$ to $C^{\frac{\alpha}{2}; \alpha}([0,T] \times \partial\omega)$ which takes $\phi$ to $\tilde{\gamma} \circ \phi$ is of class $C^\infty$ (see Valent \cite[Thm.~4.4]{Va88}), then by exploiting  Theorem \ref{thm:smoothreplin} one can deduce the smoothness of  the map $(\phi,f)\mapsto u[\phi,f]$.
\end{remark}
}

\section*{Acknowledgment}

{The authors would like to thank the anonymous Referee for the valuable comments which have improved the paper.} The authors are members of the ``Gruppo Nazionale per l'Analisi Matematica, la Probabilit\`a e le loro Applicazioni'' (GNAMPA) of the ``Istituto Nazionale di Alta Matematica'' (INdAM).
The authors acknowledge the support  of the
project funded by the EuropeanUnion - NextGenerationEU under the National Recovery and
Resilience Plan (NRRP), Mission 4 Component 2 Investment 1.1 - Call PRIN 2022 No. 104 of
February 2, 2022 of Italian Ministry of University and Research; Project 2022SENJZ3 (subject area: PE - Physical Sciences and Engineering) ``Perturbation problems and asymptotics for elliptic differential equations: variational and potential theoretic methods''. M.D.R., P.L., and P.M. also acknowledge the support of the INdAM GNAMPA Project codice
CUP\_E53C22001930001 ``Operatori differenziali e integrali in geometria spettrale''. P.M. and R.M. also
acknowledge the support from EU through the H2020-MSCA-RISE-2020 project EffectFact, Grant agreement ID: 101008140. 
Part of this work was done while R.M. was visiting C3M - Centre for Computational Continuum Mechanics (Slovenia). R.M. wishes to thank C3M for the kind hospitaliy.

\section*{Conflict of interest}

The authors declare that they  have no conflict of interest.

\section*{Data availability} 

Data sharing is not applicable to this article as no datasets were
generated or analyzed during the current study.

\end{document}